\DeclareMathAlphabet{\mathpzc}{OT1}{pzc}{m}{it}
\newtheorem{theorem}{Theorem}[section]
\newtheorem*{theorem*}{Theorem}
\newtheorem{proposition}[theorem]{Proposition}
\newtheorem{lemma}[theorem]{Lemma}
\newtheorem*{lemma*}{Lemma}
\newtheorem*{conjecture*}{Conjecture}
\theoremstyle{definition}
\newtheorem{definition}[theorem]{Definition}
\newtheorem{example}[theorem]{Example}
\theoremstyle{remark}
\newtheorem{remark}[theorem]{Remark}
\newcommand{\disp}{\displaystyle}
\newcommand{\R}{\mathbb{R}}
\newcommand{\C}{\mathbb{C}}
\newcommand{\g}{\mathfrak{g}}
\newcommand{\h}{\mathfrak{h}}
\newcommand{\m}{\mathfrak{m}}
\newcommand{\mt}{\mathfrak{m}_\Theta}
\newcommand{\ul}{\mathfrak{u}}
\newcommand{\kt}{\mathfrak{k}_\Theta}
\newcommand{\su}{\mathfrak{su}}
\newcommand{\sll}{\mathfrak{sl}}
\newcommand{\Pp}{\mathcal{P}}
\newcommand{\Aal}{A_\alpha}
\newcommand{\Sal}{S_\alpha}
\newcommand{\Xtil}{\widetilde{X}}
\newcommand{\flag}{\mathbb{F}_\Theta}
\newcommand{\Pt}{P_{\Theta}}
\newcommand{\Kt}{K_{\Theta}}
\newcommand{\pimenos}{\Pi^+\setminus\langle\Theta\rangle}
\newcommand{\piteta}{\Pi(\Theta)}
\newcommand{\Lamjv}{\Lambda^{\#}}
\newcommand{\lamjv}{\lambda_\sigma^\#}
\newcommand{\lambjv}{\lambda^\#}
\newcommand{\svejv}{I_{\Lambda^\#}^\gamma(q)}
\newcommand{\sve}{I_{\Lambda}^\gamma(q)}
\newcommand{\sveNR}{I_{\Lambda_{NR}}^\gamma(q)}
\newcommand{\svee}{I_{\Lambda}^\gamma(q_0)}
\newcommand{\svejvv}{I_{\Lambda^\#}^\gamma(q_0)}
\newcommand{\dn}{\mathrm{d}}
\newcommand{\spp}{\mathfrak{sp}}
\newcommand{\Abeta}{A_{\beta}}
\newcommand{\Adelta}{A_{\delta}}
\DeclareMathOperator{\Ad}{Ad}
\DeclareMathOperator{\ad}{ad}
\DeclareMathOperator{\sen}{sen}
\DeclareMathOperator{\Sp}{span_\R}
\DeclareMathOperator{\Spp}{Sp}
\DeclareMathOperator{\Spc}{span_\C}
\DeclareMathOperator{\SU}{SU}
\DeclareMathOperator{\U}{U}
\DeclareMathOperator{\Id}{Id}
\numberwithin{equation}{section}
\begin{document}


\title[Variational aspects of homogeneous geodesics on flag manifolds]{Variational aspects of homogeneous geodesics on generalized flag manifolds and applications}

\author{Rafaela F. do Prado and Lino Grama}
\address{Department of Mathematics - IMECC, University of Campinas - Brazil}
\email{linograma@gmail.com, rafaelafprado@gmail.com}
\thanks{LG is supported by Fapesp grant no. 2014/17337-0 and CNPq grant 476024/2012-9. RP is supported by CNPq grant 142259/2015-2.}

\begin{abstract}
We study conjugate points along homogeneous geodesics in generalized flag manifolds. This is done  by analyzing the second variation of the energy of such geodesics. We also give an example of how the homogeneous Ricci flow can evolve in such way to produce conjugate points in the complex projective space $\C P^{2n+1} = \text{Sp}(n+1)/(\text{U}(1)\times\text{Sp}(n))$.
\end{abstract}

\maketitle



\section*{Introduction}

In this paper we study global Riemannian geometry in a class of homogeneous spaces called generalized flag manifolds $U/K$ with $U$-invariant Riemannian metrics.



A generalized flag manifold is an homogeneous space $G/P$, where $G$ is a simple complex Lie group and $P$ is a parabolic subgroup of $G$. If $U$ is the compact real form of $G$ and $K=U\cap P$, then $G/P$ and $U/K$ are diffeomorphic and $K$ is the centralizer of a torus in $U$.

Using the invariance of the metric, many geometric objects can be described by their value at a single point, called the origin of the space (the trivial coset $o=[e]$ in $U/K$). Then, we can use a Lie theoretical approach to study curvatures and geodesics and, in many cases, one can translate a Riemannian geometry problem into a (non-trivial) algebraic problem. Some examples of this procedure can be found at \cite{A-A}, \cite{lino1}, \cite{itoh}, \cite{caiolinosan}, \cite{sm-negr}, \cite{ziller}.

In this work we study a special class of geodesics in homogeneous spaces, called {\em homogeneous geodesics}. A geodesic $\gamma$ is homogeneous if it is the orbit of a 1-parameter subgroup of $U$, that is, $$\gamma(t)=\exp tX\cdot o,$$ for some $X\in\mathfrak{u}=Lie(U)$.

An algebraic criteria to determine if a vector $X\in\mathfrak{u}$ gives rise to an homogeneous geodesic was given by Kowalski-Vanhecke in \cite{kow}. An homogeneous Riemannian manifold is called a {\em g.o. space} (geodesic orbit space) if every geodesic is homogeneous. Examples of g.o. spaces are the normal (cf. \cite{cheeger-ebin}) and the naturally reductive homogeneous spaces (cf. \cite{koba}). In \cite{A-A}, Arvanitoyergos and Alekseevsky classified  generalized flag manifolds that admits metrics (not homothetic to the normal metric) such that all geodesics are homogeneous.

In \cite{lino1} a new class of homogeneous geodesics were introduced, the {\em equigeodesics}, ones which are homogeneous curves that are geodesics with respect to any $U$-invariant metric.

From the global Riemannian geometry point of view, an important object is a Jacobi field along a geodesic. They provide information about minimizing properties of geodesics (conjugate points) and, therefore, about global geometry of a Riemannian manifold. The relation between conjugate points and calculus of variation is done by the Morse Index Theorem (cf. \cite{cheeger-ebin}). Some references regarding Jacobi fields in homogeneous spaces are the following papers by Chavel \cite{chavel}, Ziller \cite{ziller2} (in the naturally reducible case), and more recently Gonzalez-Davial and Navieira \cite{priconj} (for rank one normal homogeneous spaces).

In this work we study conjugate points along homogeneous geodesics in generalized flag manifolds equipped with an invariant metric (not necessarily a naturally reducible metric). We start by calculating the first and the second variations of the energy to such geodesics.

Now suppose that an homogeneous curve $\gamma$ is a geodesic in the flag manifold $U/K$ with respect to two invariant metrics $g_1$ and $g_2$ (we can suppose that $\gamma$ is an equigeodesic, for example). Our first main result gives a formula that allow us to compare the second variation of the energy of $\gamma$ with respect to $g_1$ and with respect to $g_2$ when the metric $g_2$ is obtained from a specific perturbation of the metric $g_1$ (such perturbation is called $\mathcal{P}$-perturbation). We have (see Section \ref{pertub} for details about the notation): 
\\

{\bf Theorem A: } {\em Let $U/K$ be a generalized flag manifold and let $\gamma: [0,a] \rightarrow U/K$ be an homogeneous curve and $g_2$ be an invariant metric obtained by a  $\Pp$-perturbation of the invariant metric $g_1$. Assume that  $\gamma$ is an homogeneous geodesic with respect to $g_1$ and $g_2$ having geodesic vector $X \in \mt$. Then 
$$I^{\gamma}_{g_2} = I^{\gamma}_{g_1} + 4\sum_{\alpha \in \sigma}\sum_{\sigma \in \piteta\setminus\Pp}\xi_\sigma \int_0^a(\tilde{f}_\alpha^2(t) + \tilde{g}_\alpha^2(t))dt,$$
where $I^{\gamma}_{g_i}, i=1,2 ,$ is the Morse index of $\gamma$ with respect to the metric $g_i$,     $\tilde{f}_\alpha(t)$ and $\tilde{g}_\alpha(t)$ are differentiable functions given by 
$$(\Ad{(\exp{-tX})q'(t))_{\mt}} = \sum_{\alpha \in \pimenos}(\tilde{f}_\alpha(t)\Aal + \tilde{g}_\alpha(t)\Sal),$$ and the real numbers $\xi_\sigma$ are determined by the $\mathcal{P}$-perturbation.}
\\

We remark that a similar result was proved in \cite{caiolinosan} in the context of harmonic maps from Riemann surfaces to homogeneous spaces.

We use Theorem A to study in details the case of the complex projective space $\mathbb{C} P^{2n+1}= \Spp(n+1) / \text{U}(1)\times \Spp(n)$. We remark that $\mathbb{C} P^{2n+1}$ viewed as a $\Spp(n+1)$-homogeneous space is {\em not} a Hermitian symmetric space. The (first) conjugate points of an homogeneous geodesic in $\mathbb{C} P^{2n+1}$ with respect to the normal metric ({\em standard}) are well known (see \cite{ziller2}, \cite{priconj}).

Let $\gamma$ be an equigeodesic in $(\mathbb{C} P^{2n+1}, g_{normal})$ and $t_0$ be a real number such that $\gamma(t_0)$ is the first conjugate point of $\gamma(0)$ along $\gamma$. As an application of Theorem A we deform the metric $g_{normal}$ in order to obtain for $t$ $(<t_0)$ $\gamma(0)$ and $\gamma(t)$ conjugate with respect to the deformed metric. More precisely,
\\

{\bf Theorem B: }{\em Consider the projective space $\C P^{2n+1} = \text{Sp}(n+1)/(\text{U}(1)\times\text{Sp}(n))$ equipped with $g_{normal}$, the normal invariant metric, and let $\gamma: \R \to \C P^{2n+1}$ be the equigeodesic 
\begin{equation*}
\gamma(t) = \exp{tA_{11}} \cdot o,
\end{equation*}
where $A_{11}\in \mathfrak{m}_2\subset\mathfrak{sp}(n+1)$. Fix $b\in \R$ such that $0 < b < \pi/2m$, where $m = \sqrt{2n+4}/(2n
+4)$. Then there exist an invariant metric $g$ obtained by a $\mathcal{P}$-pertubation of the metric $g_{normal}$ and $c \in \R$  $(0 < c \leq b)$, such that $\gamma(0)$ and $\gamma(c)$ are conjugate with respect to the metric $g$.}
\\

It is natural to ask if the deformation of the metric in Theorem B can be obtained by some geometric flow. Using the description of the Ricci flow for $Sp(n)$-invariant metrics in $\C P^{2n+1}$ obtained in \cite{gramamartins}, we have:
\\

{\bf Theorem C: }{\em Consider  $\C P^{2n+1} = \text{Sp}(n+1)/(\text{U}(1)\times\text{Sp}(n))$, with $n \geq 10$,  and $$b \in \left(\dfrac{8\sqrt{6}\pi^2}{3m(4n+3)},\dfrac{\pi}{2m}\right),$$ where $m = \sqrt{2n+4}/(2n
+4)$.
Let $\gamma(t)=\exp tX \cdot o$ be an equigeodesic with $X = A_{11} \in \m_1$.   Then the homogeneous Ricci flow $g(t)$ with $g(0)=g_{normal}$ evolves in such way that there exist $t_0\in (-\infty,0)$ and $c \in \R$ $(0 < c \leq b)$, such that $\gamma(0)$ and $\gamma(c)$ are conjugate with respect to the invariant metric $g(t_0)$.}
\\

Theorem C provides an example of how the Ricci flow can evolve in a way to produce conjugate points. There are several recent papers with important results about Ricci flow (and other geometric flows) in homogeneous spaces, for example, \cite{niko}, \cite{B-W}, \cite{lauret2}, \cite{lauret3} and references therein. 

In the final part of this paper we generalize Theorem B to arbitrary generalized flag manifolds. This is done using the Lie theoretical properties of flag manifolds. We introduce an algebraic concept called {\em perturbation pair with respect to a root $\alpha$}. Such perturbation pair consists of two roots of the Lie algebra of $U$ satisfying some properties (see Definition \ref{pair-pertub} for more details). In a short way, the root spaces associated to the perturbation pair provide directions to a $\Pp$-perturbation of the normal metric in order to produce conjugate points with respect to the perturbed metric. We obtain the following result:
\\

{\bf Theorem D:} {\em Let $U/K$ be a generalized flag manifold equipped with the normal (standard) metric $g_{normal}$. Let $\alpha \in \sigma_o \in \piteta$ and  consider the equigeodesic $\gamma: \R \to U/K$ given by $$\gamma(t) = \exp{tA_{\alpha}}\cdot o.$$ Suppose that there exists $a \in \R$ such that $\gamma(a)$ is the first conjugate point of $\gamma(0)$ along $\gamma$ with respect to the metric $g_{normal}$. If $\{\beta,\gamma\}$ is a perturbation pair with respect to $\alpha$, then there exist two real numbers $x,b$,  $0<x\leq b<a$ and an invariant metric $g$ obtained by a $\Pp$-perturbation of $g_{normal}$ such that $\gamma(x)$ is conjugate to $\gamma(0)$ with respect to the metric $g$.}
\\

As an application of Theorem D, we obtain a $\mathcal{P}$-perturbation of the normal metric in the Wallach flag manifold $SU(3)/T^2$ and produce conjugate points along an equigeodesic.

The paper is organized as follows: in section 1 we review the main facts about the geometry of flag manifolds; in section 2, we calculate the first and the second variation of the energy of a homogeneous geodesic; in section 3 we prove Theorem A; in section 4 we briefly review the $Sp(n)$-invariant geometry of $\C P^{2n+1}$ and the Ricci flow of invariant metrics and prove Theorems B and C. Finally, in section 5 we introduce the perturbation pair concept and prove Theorem D.

\section{Preliminaries: geometry of generalized flag manifolds}
\subsection{Generalized flag manifolds}
Let $\mathfrak{g}$ be a complex simple Lie algebra and take a Lie group $G$
with Lie algebra $\mathfrak{g}$. Given a Cartan subalgebra $\mathfrak{h}$ of 
$\mathfrak{g}$, denote by $\Pi $ the set of roots of the pair $\left( 
\mathfrak{g},\mathfrak{h}\right) $, so that

\begin{equation}
\mathfrak{g}=\mathfrak{h}\oplus \sum_{\alpha \in \Pi }\mathfrak{g}_{\alpha },
\end{equation}
where $\mathfrak{g}_{\alpha }=\{X\in \mathfrak{g};\,\forall H\in \mathfrak{h}%
,\,[H,X]=\alpha (H)X\}$ denotes the corresponding complex one-dimensional
root space.

We denote by $(\cdot ,\cdot) $ the Cartan-Killing form of $%
\mathfrak{g}$ and fix once and for all a Weyl basis of $\mathfrak{g}$ which
amounts to take $X_{\alpha }\in \mathfrak{g}_{\alpha }$ such that $(
X_{\alpha },X_{-\alpha }) =1$, and $[X_{\alpha },X_{\beta }]=m_{\alpha
,\beta }X_{\alpha +\beta }$ with $m_{\alpha ,\beta }\in \mathbb{R}$, $%
m_{-\alpha ,-\beta }=-m_{\alpha ,\beta }$ and $m_{\alpha ,\beta }=0$ if $%
\alpha +\beta $ is not a root (see \cite{hel}, \cite{sm-alg}).   

Recall that $( \cdot ,\cdot ) $ is nondegenerate on $\mathfrak{h}
$. Given $\alpha \in \mathfrak{h}^{*}$ we let $H_{\alpha }$ be given by $%
\alpha (\cdot )=( H_{\alpha },\cdot ) $, and denote by $%
\mathfrak{h}_{\R}$ the real subspace spanned by $H_{\alpha }$, $\alpha \in
\Pi $. Accordingly $\mathfrak{h}_{\mathbb{R}}^{*}$ stands for the real
subspace of the dual $\mathfrak{g}^{*}$ spanned by the roots.

Let $\Pi ^{+}$ be a choice of positive roots and $\Sigma $ the corresponding
set of simple roots. If $\Theta $ is a subset of $\Sigma $ we put $\langle
\Theta \rangle $ for the set of roots spanned by $\Theta $, and $\langle
\Theta \rangle ^{\pm }:=\langle \Theta \rangle \cap \Pi ^{\pm }$. We have 
\begin{equation}
\mathfrak{g}=\mathfrak{h}\oplus \sum_{\alpha \in {\langle \Theta \rangle }%
^{+}}\mathfrak{g}_{\alpha }\oplus \sum_{\alpha \in {\langle \Theta \rangle }%
^{+}}\mathfrak{g}_{-\alpha }\oplus \sum_{\beta \in \Pi ^{+}\setminus {%
\langle \Theta \rangle }}\mathfrak{g}_{\beta }\oplus \sum_{\beta \in \Pi
^{+}\setminus {\langle \Theta \rangle }}\mathfrak{g}_{-\beta }.
\end{equation}

Let 
\begin{equation}
\mathfrak{p}_{\Theta }=\mathfrak{h}\oplus \sum_{\alpha \in {\langle \Theta
\rangle }^{-}}\mathfrak{g}_{\alpha }\oplus \sum_{\alpha \in {\Pi }^{+}}%
\mathfrak{g}_{\alpha }
\end{equation}
be the standard parabolic subalgebra determined by $\Theta $. Put 
\begin{equation}
\mathfrak{q}_{\Theta }=\sum_{\beta \in \Pi ^{+}\setminus {\langle \Theta
\rangle }}\mathfrak{g}_{-\beta }
\end{equation}
so that $\mathfrak{g}=\mathfrak{q}_{\Theta }\oplus \mathfrak{p}_{\Theta }$.

The generalized flag manifold $\mathbb{F}_{\Theta }$ associated to $%
\mathfrak{p}_{\Theta }$ is defined as the homogeneous space 
\begin{equation}
\mathbb{F}_{\Theta }=G/P_{\Theta },
\end{equation}
where $P_{\Theta }$ is the normalizer of $\mathfrak{p}_{\Theta }$ in $G$.

We take as compact real form of $\mathfrak{g}$ the real subalgebra 
\begin{equation*}
\mathfrak{u}=\mathrm{span}_{\mathbb{R}}\{i\mathfrak{h}_{\mathbb{R}%
},A_{\alpha },iS_{\alpha }:\alpha \in \Pi \} 
\end{equation*}
where $A_{\alpha }=X_{\alpha }-X_{-\alpha }$ and $S_{\alpha }=X_{\alpha
}+X_{-\alpha }$.

Notice that a basis of $\ul$ is
\begin{equation}
\{\Aal, \Sal, iH_{\beta}: \, \alpha \in \Pi^+, \, \beta \in \Sigma\}.
\label{baseu}
\end{equation}

Denote by $U=\exp \mathfrak{u}$ the corresponding compact
real form of $G$ and write $K_{\Theta }=P_{\Theta }\cap U$. It is well known
that $U$ acts transitively on each $\mathbb{F}_{\Theta }$, that identifies
with $U/K_{\Theta }$.

Let $\mathfrak{k}_{\Theta }$ be the Lie algebra of $K_{\Theta }$ 
We have $%
\mathfrak{k}_{\Theta }=\mathfrak{u}\cap \mathfrak{p}_{\Theta }$

Denote by $o=eK_{\Theta}$ the origin of $\mathbb{F}_{\Theta }$. The tangent
space $T_{o}\mathbb{F}_{\Theta }$ can be identified with the orthogonal
complement of $\mathfrak{k}_{\Theta }$ in $\mathfrak{u}$, namely 
\begin{equation*}
T_{o}\mathbb{F}_{\Theta }= \mathfrak{m}_{\Theta }=\mathrm{span}_{\mathbb{R}%
}\{A_{\alpha },iS_{\alpha }:\alpha \notin \langle \Theta \rangle
\}=\sum_{\alpha \in \Pi \setminus \langle \Theta \rangle }\mathfrak{u}%
_{\alpha }, 
\end{equation*}
where $\mathfrak{u}_{\alpha }=\left( \mathfrak{g}_{\alpha }\oplus \mathfrak{g%
}_{-\alpha }\right) \cap \mathfrak{u}=\mathrm{span}_{\mathbb{R}}\{A_{\alpha
},iS_{\alpha }\}$.

The next lemma provides the Lie bracket between elements of $\ul$.
\begin{lemma}
\label{colchetes}
The Lie bracket between the elements of the base (\ref{baseu}) of $\ul$ is given by
\\

\begin{minipage}[t]{.5\textwidth}
$[\sqrt{-1}H_{\alpha},\Abeta] = \beta(H_{\alpha})S_{\beta}$ \\
$[\sqrt{-1}H_{\alpha},S_{\beta}] = -\beta(H_{\alpha})\Abeta$ \\
$[\Aal,\Sal] = 2\sqrt{-1}H_{\alpha}$
\end{minipage}
\begin{minipage}[t]{.5\textwidth}
$[\Aal,\Abeta] = m_{\alpha,\beta}A_{\alpha+\beta} + m_{-\alpha,\beta}A_{\alpha - \beta}$ \\
$[\Sal,S_{\beta}] = -m_{\alpha,\beta}A_{\alpha+\beta} - m_{\alpha,-\beta}A_{\alpha,-\beta}$ \\
$[\Aal,S_{\beta}] = m_{\alpha,\beta}S_{\alpha+\beta} + m_{\alpha,-\beta}S_{\alpha-\beta}$.
\end{minipage}

\end{lemma}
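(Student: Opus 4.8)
The plan is to prove all six identities by a single direct computation: substitute the definitions $A_\gamma = X_\gamma - X_{-\gamma}$ and $S_\gamma = X_\gamma + X_{-\gamma}$ into each bracket, expand by bilinearity and antisymmetry of $[\cdot,\cdot]$, and recombine the resulting terms. The only inputs needed are the standard properties of a Weyl basis (see \cite{hel}, \cite{sm-alg}): for every $H \in \mathfrak{h}$ one has $[H, X_\gamma] = \gamma(H) X_\gamma$; the normalization $(X_\gamma, X_{-\gamma}) = 1$, together with the general identity $[X, Y] = (X, Y) H_\gamma$ for $X \in \mathfrak{g}_\gamma$ and $Y \in \mathfrak{g}_{-\gamma}$, yields $[X_\gamma, X_{-\gamma}] = H_\gamma$; and $[X_\alpha, X_\beta] = m_{\alpha,\beta} X_{\alpha+\beta}$, with $m_{\alpha,\beta} = 0$ whenever $\alpha + \beta$ is not a root, subject to the sign rule $m_{-\alpha,-\beta} = -m_{\alpha,\beta}$ (from which $m_{-\alpha,\beta} = -m_{\alpha,-\beta}$ follows, replacing $\beta$ by $-\beta$). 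One also uses the elementary identities $A_{-\gamma} = -A_\gamma$, $S_{-\gamma} = S_\gamma$ and $H_{-\gamma} = -H_\gamma$, immediate from the definitions, to bring the root labels appearing on the right-hand sides into the stated form.

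As a representative case, fix $\alpha \neq \pm\beta$ and expand
$$[A_\alpha, A_\beta] = [X_\alpha - X_{-\alpha},\, X_\beta - X_{-\beta}] = m_{\alpha,\beta} X_{\alpha+\beta} - m_{\alpha,-\beta} X_{\alpha-\beta} - m_{-\alpha,\beta} X_{-\alpha+\beta} + m_{-\alpha,-\beta} X_{-\alpha-\beta}.$$
Using $m_{-\alpha,-\beta} = -m_{\alpha,\beta}$, the first and last terms combine to $m_{\alpha,\beta}\bigl(X_{\alpha+\beta} - X_{-(\alpha+\beta)}\bigr) = m_{\alpha,\beta} A_{\alpha+\beta}$; using $m_{-\alpha,\beta} = -m_{\alpha,-\beta}$, the two middle terms combine to $-m_{\alpha,-\beta}\bigl(X_{\alpha-\beta} - X_{-(\alpha-\beta)}\bigr) = m_{-\alpha,\beta} A_{\alpha-\beta}$, which is precisely the asserted formula (a summand whose $m$-coefficient vanishes, i.e. when $\alpha \pm \beta$ is not a root, is simply absent). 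The identities for $[S_\alpha, S_\beta]$ and $[A_\alpha, S_\beta]$ are obtained by the very same expansion, only the sign pattern changing as the plus signs replace the minus signs; the three brackets involving $\sqrt{-1} H_\alpha$ follow at once by applying $[H_\alpha, X_\gamma] = \gamma(H_\alpha) X_\gamma$ to each summand of $A_\beta$ or $S_\beta$; and $[A_\alpha, S_\alpha]$ reduces directly to the normalization $[X_\gamma, X_{-\gamma}] = H_\gamma$.

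I do not expect a genuine obstacle here: every step is an instance of bilinearity or of one of the three structural relations above. The only thing that will require real care is the sign bookkeeping — keeping track of $m_{-\alpha,-\beta} = -m_{\alpha,\beta}$ and its consequences, of the conventions $A_{-\gamma} = -A_\gamma$, $S_{-\gamma} = S_\gamma$, $H_{-\gamma} = -H_\gamma$ used to normalize indices, and of the cases in which $\alpha \pm \beta$ fails to be a root, where the corresponding summand on the right-hand side drops out.
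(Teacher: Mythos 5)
Your overall strategy — plug in the definitions, expand by bilinearity, and use the three standard Weyl-basis relations $[H,X_\gamma]=\gamma(H)X_\gamma$, $[X_\gamma,X_{-\gamma}]=H_\gamma$, $[X_\alpha,X_\beta]=m_{\alpha,\beta}X_{\alpha+\beta}$ together with $m_{-\alpha,-\beta}=-m_{\alpha,\beta}$ — is the only natural route, and your worked case $[A_\alpha,A_\beta]$ is correct.

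There is, however, a real gap in the remaining five identities, caused by the convention you adopt for $S_\gamma$. You substitute $S_\gamma=X_\gamma+X_{-\gamma}$, which is indeed what the paper writes when it first introduces the compact real form. But note that the element of $\mathfrak{u}$ is $iS_\gamma$, not $S_\gamma$: the paper defines $\mathfrak{u}=\mathrm{span}_{\mathbb{R}}\{i\mathfrak{h}_{\mathbb{R}},A_\alpha,iS_\alpha\}$, and then eq.\ (\ref{baseu}), the lemma, and the explicit $\mathfrak{sp}$ computations in Section 4 (where $S_{ij}=i(X_{ij}+X_{-ij})$) all silently use $S_\alpha$ to mean $i(X_\alpha+X_{-\alpha})$. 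With your stated $S_\gamma=X_\gamma+X_{-\gamma}$, four of the six formulas come out wrong: a straight expansion gives $[\sqrt{-1}H_\alpha,A_\beta]=\sqrt{-1}\,\beta(H_\alpha)S_\beta$ (spurious $i$), $[\sqrt{-1}H_\alpha,S_\beta]=\sqrt{-1}\,\beta(H_\alpha)A_\beta$ (wrong coefficient, not $-\beta(H_\alpha)A_\beta$), $[A_\alpha,S_\alpha]=2H_\alpha$ (missing the $\sqrt{-1}$), and $[S_\alpha,S_\beta]=+\,m_{\alpha,\beta}A_{\alpha+\beta}+m_{\alpha,-\beta}A_{\alpha-\beta}$ (wrong overall sign). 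Only $[A_\alpha,A_\beta]$ (no $S$ at all) and $[A_\alpha,S_\beta]$ (where the single factor of $i$ on the left just passes to the $S$'s on the right) are insensitive to the choice. So the assertion that the other identities ``follow at once'' or ``by the very same expansion, only the sign pattern changing'' does not survive careful checking with the convention you wrote down. The repair is easy — carry out the same computation with $S_\gamma=i(X_\gamma+X_{-\gamma})$, i.e., with the honest basis vectors of $\mathfrak{u}$, and account for the resulting factors $i^2=-1$ in $[S_\alpha,S_\beta]$ and the single extra $i$ in the $H$-brackets and in $[A_\alpha,S_\alpha]$ — but it must be made explicit, since with the convention as you stated it the proof does not yield the lemma.
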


\subsection{Isotropy representation and invariant metrics}

Let $\flag = U/\Kt$ be a generalized flag manifold. The decomposition $\ul = \mt \oplus \kt$ satisfies $\Ad(\Kt)\mt \subset \mt$ and thus it is a reductive decomposition of $\ul$. Also, since the tangent space $T_o\flag$ at the origin $o=e\Kt$ is identified with $\mt$, the isotropy representation of $\Kt$ is identified with the restriction $\Ad(\Kt)|_{\mt}$ of the adjoint representation of $\Kt$ on $\ul$ to $\mt$. This representation is completely reductive and we can decompose $\mt$ as
$$\mt = \m_1 \oplus \cdots \oplus \m_n,$$
where each $\m_i$ is an irreducible component of the isotropy representation and can be written as a sum of $\ul_{\alpha}$'s (see \cite{itoh} for more details).

Let $\sigma_i \subset \pimenos$ be the subset of roots such that
$$\m_i = \sum_{\alpha \in \sigma_i}\ul_{\alpha}.$$
We will denote the set of $\sigma_i$'s by $\piteta$.

\begin{example}
Let $U/T$ be a maximal flag manifold ($\Theta = \emptyset$). In this case, each $\ul_\alpha$ is an irreducible component of the isotropy representation, where $\alpha \in \Pi^+$. We can suppose, without loss of generality, that $\Pi^+=\{\alpha_1,...,\alpha_n\}$. Thus, $\sigma_i = \{\alpha_i\}$ and $\piteta$ can be identified with $\Pi^+$. \qed
\end{example}

\begin{example} Let us describe the isotropy representation of the complex grassmannian (viewed as a generalized flag manifold) $$G_k \mathbb{C}^{n+k}=\SU(n+k)/\text{S}(\U(n)\times \U(k)).$$
The complexification of $\su(n+k)$ is the Lie algebra $\sll(n+k,\C)$. The root space decomposition of $\sll(n+k,\C)$ is given as follows. Consider the Cartan subalgebra $\h$ given by diagonal matrices of trace zero. Then, the root system of $\sll(n+k,\C)$ relative to $\h$ is composed by $\alpha_{ij} := \varepsilon_i - \varepsilon_j$, $1 \leq i \neq j \leq n+k$, where $\varepsilon_i$ is the functional given by $\varepsilon_i: diag\{a_1,\ldots,a_{n+k}\} \to a_i$.

A simple system of roots is
$$\Sigma = \{\alpha_{i,i+1}: \, 1 \leq i \leq n+k-1\}$$
and the set of positive roots relative to this simple system is
$$\Pi^+ = \{\alpha_{ij}: \, 1 \leq i < j \leq n+k \}.$$

Let $E_{ij}$ be the $(n+k)\times (n+k)$ matrix with 1 in the $ij$-position and 0 otherwise. Then, the root space $\g_{\alpha_{ij}}$ is the subspace generated by $E_{ij}$ over $\C$.

In the case of $\SU(n+k)/\text{S}(\U(n)\times\U(k))$, we have
$$\Theta = \{ \alpha_{i,i+1}: \, 1 \leq i \leq n+k-1, \, i \neq n\}$$
and it is an isotropically irreducible homogeneous space, that is, $\mt$ is itself an irreducible component of the isotropy representation. Since
$$\mt = \sum_{\substack{1\leq i\leq n \\ 1 \leq j \leq k}} \ul_{\alpha_{i,n+j}}$$
we have that $\sigma = \{\alpha_{i,n+j}: \, 1 \leq i \leq n, 1\leq j \leq k\}$. \qed

\end{example}

\begin{example} Consider the flag manifold $\C P^{5}=\Spp(3)/(\U(1)\times\Spp(2))$. 

The complexification of $\spp(3)$ is the Lie algebra $\spp(6,\C)$. The root space decomposition of $\spp(6,\C)$ is given as follows. Consider the Cartan subalgebra $\h$ of diagonal matrices. Then, the root system of $\spp(6,\C)$ relative to $\h$ is composed by $\alpha_{ij} := \varepsilon_i - \varepsilon_j$, $\alpha_{ij}^+:=\varepsilon_i + \varepsilon_j$, $\alpha_{ii} = 2\varepsilon_i$ and $-\alpha_{ii}$, $1 \leq i \neq j \leq 3$, where $\varepsilon_i$ is the functional given by $\varepsilon_i: diag\{a_1,a_2,a_{3}\} \to a_i$.

A simple system of roots is
$$\Sigma = \{\alpha_{12},\alpha_{23}, \alpha_{33}\}$$
and the set of positive roots relative to this simple system is
$$\Pi^+ = \{\alpha_{12},\alpha_{12}^+,\alpha_{13},\alpha_{13}^+,\alpha_{23},\alpha_{23}^+,\alpha_{11},\alpha_{22},\alpha_{33}\}.$$

Let $E_{ij}$ be the $6\times6$ matrix with 1 in the $ij$-position and 0 otherwise. Then, the root spaces are given by
\begin{eqnarray*}
\g_{\alpha_{ij}} & = & \Spc\{E_{ij} - E_{3+j,3+i}: \, 1\leq i \neq j \leq 3\}, \\
\g_{\alpha_{ij}^+} & = & \Spc\{E_{i,3+j} + E_{j,3+i}: \, 1 \leq i \neq j \leq 3\}, \\
\g_{-\alpha{ij}^+} & = & \Spc\{E_{3+i,j}+E_{3+j,i}: \, 1 \leq i \neq j \leq 3\}, \\
\g_{\alpha_{ii}} & = & \Spc\{E_{i,3+i}: \, 1 \leq i \leq 3\}, \\
\g_{-\alpha{ii}} & = & \Spc\{E_{3+i,i}: \, 1 \leq i \leq 3\}.
\end{eqnarray*}

In the case of $\C P^{5}=\Spp(3)/(U(1)\times\Spp(2))$, we have
$$\Theta = \{ \alpha_{23},\alpha_{33}\}$$
and $\mt = \m_1 \oplus \m_2$, where
$$\m_1 = \ul_{\alpha_{12}}\oplus\ul_{\alpha_{12}^+}\oplus \ul_{\alpha_{13}}\oplus\ul_{\alpha_{13}^+},$$
$$\m_2 = \ul_{\alpha_{11}}.$$

Then, $\sigma_1 = \{\alpha_{12},\alpha_{12}^+,\alpha_{13},\alpha_{13}^+\}$ and $\sigma_2 = \{\alpha_{11}\}$. \qed

\end{example}

There is a 1-1 correspondence between $U$-invariant metrics $g$ on $\flag$ and $\Ad(\Kt)$-invariant scalar products $B$ on $\mt$ (see for instance \cite{koba}). Any $B$ can be written as $$B(X,Y) = -(\Lambda X,Y),$$ with $X,Y \in \mt$, where $\Lambda$ is an $\Ad(\Kt)$-invariant positive symmetric operator on $\mt$ with respect to the Cartan-Killing form.

Notice that, as a consequence of Schur's Lemma, $$\Lambda|_{\m_i} = \lambda_i \Id|_{\m_i},$$with $\lambda_i>0$ for each $i=1,\cdots,n$ and hence any $\Ad(\Kt)$-invariant scalar product $B$ on $\mt$ can be described by
$$B(\cdot,\cdot) = -\lambda_1(\cdot,\cdot)|_{\m_1} \oplus \cdots \oplus -\lambda_n(\cdot,\cdot)|_{\m_n}.$$

\begin{remark}
In the next sections we abuse of the notation and denote an invariant metric $g$ on $\flag$ just by $\Lambda= (\lambda_\alpha)_{\alpha \in \pimenos}$, that is, a $n$-uple of positive real numbers indexed by the irreducible components of $\mathfrak{m}_\Theta$.
\end{remark}

\subsection{Homogeneous geodesics and equigeodescis} 
\begin{definition}
\label{defihomo} Let $(G/H,g)$ be a homogeneous Riemannian
manifold. A geodesic $\gamma(t)$ on $M$ through the origin $o$ is
called \textit{homogeneous} if it is the orbit of a $1$-parameter
subgroup of $G,$ that is, $$ \gamma(t)=(\exp tX)\cdot o, $$ where
$X \in \mathfrak{g}$. The vector $X$ is called a geodesic vector.
\end{definition}

Definition \ref{defihomo} establishes a 1:1 correspondence between
geodesic vectors $X$ and homogeneous geodesics at the origin. A very useful algebraic characterization of geodesic vectors was provided by Kowalski and Vanhecke. Let us recall this characterization.   
\begin{theorem}[\cite{kow}]\label{hecke}
Let $G/H$ be a homogeneous space and $g$ be a $G$-invariant metric determined by a scalar product $B$ at the origin $o=eH$. Then a vector $X \in
\mathfrak{g}\setminus\{ 0 \}$ is a geodesic vector if, and only if,
\begin{equation}
\label{eqnfund} B(X_{\mathfrak{m}},[X,Z]_{\mathfrak{m}})=0,
\end{equation}
for every $Z\in \mathfrak{m}$, where $\mathfrak{m}$ is identified with the tangent space at the origin $o$.
\end{theorem}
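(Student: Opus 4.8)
The plan is to translate the geodesic condition into a statement about the Killing (fundamental) vector field $X^{\ast}$ on $M=G/H$ generated by the one-parameter group $\exp(tX)$, working throughout with a fixed reductive decomposition $\mathfrak g=\mathfrak h\oplus\mathfrak m$. The integral curve of $X^{\ast}$ through $o$ is precisely $\gamma$. Since $\Ad(\exp tX)X=X$, the isometry $\tau_{\exp tX}$ of $M$ preserves $X^{\ast}$ and carries $o$ to $\gamma(t)$, so it conjugates the equation $(\nabla_{X^{\ast}}X^{\ast})_{o}=0$ with $(\nabla_{X^{\ast}}X^{\ast})_{\gamma(t)}=0$; hence $\gamma$ is a geodesic if and only if $(\nabla_{X^{\ast}}X^{\ast})_{o}=0$. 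As $G$ acts transitively, the vectors $Z^{\ast}_{o}$, $Z\in\mathfrak g$, span $T_{o}M$, so this is equivalent to $g_{o}\big((\nabla_{X^{\ast}}X^{\ast})_{o},Z^{\ast}_{o}\big)=0$ for all $Z\in\mathfrak g$.

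Next I would exploit that $X^{\ast}$ is a Killing field. Applying the Killing identity $g(\nabla_{A}X^{\ast},B)+g(\nabla_{B}X^{\ast},A)=0$ with $A=Z^{\ast}$ and $B=X^{\ast}$ gives
$$g(\nabla_{X^{\ast}}X^{\ast},Z^{\ast})=-g(\nabla_{Z^{\ast}}X^{\ast},X^{\ast})=-\tfrac12\,Z^{\ast}\!\big(\|X^{\ast}\|^{2}\big).$$
Thus $\gamma$ is a geodesic exactly when the smooth function $p\mapsto g_{p}(X^{\ast}_{p},X^{\ast}_{p})$ has a critical point at $o$.

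The heart of the argument is then computing $Z^{\ast}\big(\|X^{\ast}\|^{2}\big)$ at $o$. I would differentiate along the curve $s\mapsto\exp(sZ)\cdot o$ and pull back by the isometry $\tau_{\exp(-sZ)}$, which sends $\exp(sZ)\cdot o$ back to $o$. Using the equivariance $(\tau_{g})_{\ast}X^{\ast}=(\Ad(g)X)^{\ast}$ together with the identification $T_{o}M\cong\mathfrak m$ under which $Y^{\ast}_{o}$ corresponds to the $\mathfrak m$-component $Y_{\mathfrak m}$ and $g_{o}$ to $B$, one obtains
$$g_{\exp(sZ)\cdot o}\big(X^{\ast},X^{\ast}\big)=B\big((e^{-s\ad(Z)}X)_{\mathfrak m},(e^{-s\ad(Z)}X)_{\mathfrak m}\big).$$
Differentiating at $s=0$, with $\frac{d}{ds}\big|_{0}e^{-s\ad(Z)}X=[X,Z]$ and using the symmetry of $B$, yields $Z^{\ast}\big(\|X^{\ast}\|^{2}\big)\big|_{o}=2\,B(X_{\mathfrak m},[X,Z]_{\mathfrak m})$. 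Combined with the previous paragraph, $\gamma$ is a geodesic if and only if $B(X_{\mathfrak m},[X,Z]_{\mathfrak m})=0$ for every $Z\in\mathfrak g$.

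It remains to restrict the test vectors from $\mathfrak g$ to $\mathfrak m$. For $Z\in\mathfrak h$, reductivity ($[\mathfrak h,\mathfrak m]\subseteq\mathfrak m$) gives $[X,Z]_{\mathfrak m}=[X_{\mathfrak m},Z]=-\ad(Z)X_{\mathfrak m}$, and since $B$ is $\Ad(H)$-invariant the operator $\ad(Z)|_{\mathfrak m}$ is $B$-skew-symmetric, so $B(X_{\mathfrak m},[X,Z]_{\mathfrak m})=0$ automatically; hence it suffices to test on $Z\in\mathfrak m$, which is exactly \eqref{eqnfund}. The main obstacle in making this rigorous is the bookkeeping in the third step: getting the sign conventions for fundamental vector fields straight, correctly invoking the equivariance $(\tau_{g})_{\ast}X^{\ast}=(\Ad(g)X)^{\ast}$, and verifying that $\|X^{\ast}\|^{2}$ transforms as claimed under the isometry $\tau_{\exp(-sZ)}$; once these are in place, the remaining computations are routine.
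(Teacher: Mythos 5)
The paper cites Theorem~\ref{hecke} from Kowalski--Vanhecke~\cite{kow} without reproducing a proof, so there is no internal argument to compare yours against; your job here is thus really to supply the missing standard proof, and you have done so correctly. Your route --- reduce to the condition $(\nabla_{X^{\ast}}X^{\ast})_{o}=0$ via the self-invariance $\Ad(\exp tX)X=X$, convert via the Killing identity to the vanishing of $Z^{\ast}\bigl(\|X^{\ast}\|^{2}\bigr)$ at $o$, compute that derivative by pulling $\|X^{\ast}\|^{2}$ back under $\tau_{\exp(-sZ)}$ and using $(\tau_{g})_{\ast}X^{\ast}=(\Ad(g)X)^{\ast}$, and finally discard $Z\in\mathfrak h$ by the $B$-skewness of $\ad(Z)|_{\mathfrak m}$ --- is exactly the argument in the original reference, and all the signs you flag as a concern do in fact work out: $\frac{d}{ds}\big|_{0}e^{-s\ad Z}X=[X,Z]$ feeds through to $g_{o}\bigl((\nabla_{X^{\ast}}X^{\ast})_{o},Z^{\ast}_{o}\bigr)=-B(X_{\mathfrak m},[X,Z]_{\mathfrak m})$, and the restriction from $Z\in\mathfrak g$ to $Z\in\mathfrak m$ is precisely what justifies stating~\eqref{eqnfund} as the paper does. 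The one implicit hypothesis worth making explicit is reductivity of $G/H$ (needed both for the identification $T_{o}M\cong\mathfrak m$ and for $[\mathfrak h,\mathfrak m]\subseteq\mathfrak m$), which you correctly assume at the outset and which the paper also uses silently.
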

We also have the following result:
\begin{theorem}[\cite{kosz}]
\label{givevector} If $G$ is semi-simple then $G/H$ admits at least
$m=dim(M)$ mutually orthogonal homogenous geodesics through the
origin $o$.
\end{theorem}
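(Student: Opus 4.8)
The plan is to produce the $m=\dim M$ mutually orthogonal homogeneous geodesics by diagonalizing the operator that represents the metric, after choosing the reductive complement to be Killing-orthogonal. Since $G$ is semisimple, the Cartan--Killing form $(\cdot,\cdot)$ of $\mathfrak g$ is nondegenerate, and since $G/H$ is a Riemannian homogeneous space the isotropy group $H$ is compact, so $(\cdot,\cdot)$ is negative definite on the compact subalgebra $\mathfrak h=\mathrm{Lie}(H)$; in particular $(\cdot,\cdot)|_{\mathfrak h}$ is nondegenerate. Therefore $\mathfrak m:=\mathfrak h^{\perp}$ (Killing-orthogonal complement) satisfies $\mathfrak g=\mathfrak h\oplus\mathfrak m$, is $\mathrm{Ad}(H)$-invariant and gives a reductive decomposition identifying $\mathfrak m$ with $T_oM$; moreover $(\cdot,\cdot)|_{\mathfrak m}$ is nondegenerate as well. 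As in the excerpt I would then write the scalar product attached to $g$ as $B(X,Y)=-(\Lambda X,Y)$, where $\Lambda\colon\mathfrak m\to\mathfrak m$ is the (now well-defined) operator determined by $B$ and $(\cdot,\cdot)$; being built from two symmetric forms, $\Lambda$ is $(\cdot,\cdot)$-self-adjoint, and one checks $B(\Lambda X,Y)=-(\Lambda^2X,Y)=-(\Lambda X,\Lambda Y)=B(X,\Lambda Y)$, so $\Lambda$ is also self-adjoint for the positive definite form $B$.

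The key observation is that every eigenvector of $\Lambda$ is a geodesic vector. By the spectral theorem applied to the $B$-self-adjoint operator $\Lambda$ on the Euclidean space $(\mathfrak m,B)$, there is a $B$-orthonormal basis $X_1,\dots,X_m$ of $\mathfrak m$ with $\Lambda X_i=\lambda_i X_i$ (and $\lambda_i>0$ since $\Lambda$ is positive). Fix $i$ and $Z\in\mathfrak m$. Skew-symmetry of $\mathrm{ad}$ for the Killing form gives $(X_i,[X_i,Z])=(X_i,\mathrm{ad}_{X_i}Z)=-(\mathrm{ad}_{X_i}X_i,Z)=0$, and $(X_i,[X_i,Z]_{\mathfrak h})=0$ because $X_i\in\mathfrak m\perp\mathfrak h$; hence $(X_i,[X_i,Z]_{\mathfrak m})=0$, and consequently
\[
B\bigl(X_i,[X_i,Z]_{\mathfrak m}\bigr)=-\bigl(\Lambda X_i,[X_i,Z]_{\mathfrak m}\bigr)=-\lambda_i\bigl(X_i,[X_i,Z]_{\mathfrak m}\bigr)=0 .
\]
By the Kowalski--Vanhecke criterion (Theorem~\ref{hecke}) each $X_i$ is a geodesic vector, so $\gamma_i(t)=(\exp tX_i)\cdot o$ is a homogeneous geodesic; it is nonconstant since $X_i\neq 0$ and $X_i\notin\mathfrak h$. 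Finally $g_o(\dot\gamma_i(0),\dot\gamma_j(0))=B(X_i,X_j)=\delta_{ij}$, so $\gamma_1,\dots,\gamma_m$ are $m=\dim M$ mutually orthogonal homogeneous geodesics through $o$, which proves the statement.

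There is no genuine obstacle here; the argument is short once the right viewpoint is taken. The two points that require care, and where semisimplicity is essential, are: (i) guaranteeing that $(\cdot,\cdot)|_{\mathfrak h}$ is nondegenerate, so that the Killing-orthogonal reductive complement exists, $\Lambda$ is well defined, and the $\mathfrak h$-component of $[X_i,Z]$ drops out of the computation; and (ii) observing that $(\cdot,\cdot)$ may well be indefinite on $\mathfrak m$, so that one cannot diagonalize $\Lambda$ with respect to the Killing form but must instead use the positive definite metric form $B$ — which, fortunately, $\Lambda$ is self-adjoint for.
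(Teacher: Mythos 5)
The paper states this result as a citation to Kowalski--Szenthe \cite{kosz} and gives no proof of its own, so there is no ``paper's proof'' to compare against. Your reconstruction is correct and is in fact the standard argument used in that reference: semisimplicity makes the Killing form nondegenerate, compactness of the isotropy (forced for an effectively acting isometry group) makes it negative definite on $\mathfrak h$, which yields a Killing-orthogonal reductive complement $\mathfrak m$ on which the metric operator $\Lambda$ is well defined; since $\Lambda$ is self-adjoint for the positive definite form $B$, it has a $B$-orthonormal eigenbasis, and the $\mathrm{ad}$-invariance of the Killing form together with $\mathfrak m\perp\mathfrak h$ shows each eigenvector satisfies the Kowalski--Vanhecke criterion. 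The only point worth flagging is the phrase ``$H$ is compact'': this requires the tacit (and standard) assumption that $G$ acts effectively, or at least that the isotropy representation has compact image; otherwise $H$ need not be compact, though the Killing-orthogonal decomposition can still be arranged after passing to the effective quotient. With that understood, the proof is complete and faithfully reproduces the mechanism behind the cited theorem.
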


\begin{definition}
A curve $\gamma$ on $G/H$ is an {\em equigeodesic} if it is a geodesic for
each invariant metrics on $G/H$. If the equigeodesic is of the
form $\gamma(t)=(\exp tX)\cdot o$,  where $X \in \mathfrak{g}$, we say that
$\gamma$ is a {\em homogeneous equigeodesic} and the vector $X$ is an
equigeodesic vector.
\end{definition}

If $U/K$ is a generalized flag manifold one can provide an algebraic characterization of equigeodesic vectors. Remember the reductive decomposition of $\mathfrak{u}=\mathfrak{k}\oplus \mathfrak{m}$ and the isomorphism of the tangent space at the origin of $U/K$ with the subspace $\mathfrak{m}$ of $\mathfrak{u}$.  We also identify invariant metrics $g$ on $U/K$ with $Ad(K)$-invariant scalar products $\Lambda$ on $\mathfrak{m}$.
\begin{proposition}[\cite{lino1}]
\label{equigeod} 
Let $U/K$ be a generalized flag manifold. A vector $X\in \mathfrak{m}$ is an equigeodesic vector
if, and only if,
\begin{equation}
\label{eqnlegal} [X,\Lambda X]_{\mathfrak{m}}=0,
\end{equation}
for each invariant metric $\Lambda$.
\end{proposition}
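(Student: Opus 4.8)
The plan is to read off the proposition directly from the Kowalski--Vanhecke criterion (Theorem \ref{hecke}), applied uniformly over the whole family of invariant metrics. By definition, $X\in\mathfrak{m}$ is an equigeodesic vector if and only if, for \emph{every} $U$-invariant metric $g$ on $U/K$, the homogeneous curve $\gamma(t)=(\exp tX)\cdot o$ is a geodesic. Fix such a $g$ and let $B(\cdot,\cdot)=-(\Lambda\,\cdot\,,\cdot)$ be the associated $\Ad(K)$-invariant scalar product on $\mathfrak{m}$, with $\Lambda$ positive and symmetric with respect to the Cartan--Killing form. Since $X$ already lies in $\mathfrak{m}$ we have $X_{\mathfrak{m}}=X$, so Theorem \ref{hecke} says that $\gamma$ is a geodesic for $g$ if and only if $(\Lambda X,[X,Z]_{\mathfrak{m}})=0$ for all $Z\in\mathfrak{m}$.

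The second step turns this orthogonality statement into the bracket identity (\ref{eqnlegal}). Because $\Lambda$ preserves $\mathfrak{m}$ we have $\Lambda X\in\mathfrak{m}$, and since $\mathfrak{m}$ is the Cartan--Killing orthogonal complement of $\mathfrak{k}$ in $\mathfrak{u}$, pairing with $\Lambda X$ is insensitive to the $\mathfrak{k}$-component of $[X,Z]$; hence $(\Lambda X,[X,Z]_{\mathfrak{m}})=(\Lambda X,[X,Z])$. By the ad-invariance of the Cartan--Killing form, $(\Lambda X,[X,Z])=([\Lambda X,X],Z)=-([X,\Lambda X],Z)$, and now, since $Z\in\mathfrak{m}$, only the $\mathfrak{m}$-component of $[X,\Lambda X]$ contributes, so this equals $-([X,\Lambda X]_{\mathfrak{m}},Z)$. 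Therefore $\gamma$ is a geodesic for $g$ if and only if $([X,\Lambda X]_{\mathfrak{m}},Z)=0$ for all $Z\in\mathfrak{m}$. As $\mathfrak{u}$ is a compact real form, the Cartan--Killing form is negative definite on $\mathfrak{u}$ and hence nondegenerate on $\mathfrak{m}$, and since $[X,\Lambda X]_{\mathfrak{m}}\in\mathfrak{m}$ this is equivalent to $[X,\Lambda X]_{\mathfrak{m}}=0$. Quantifying over all invariant metrics $\Lambda$ then gives exactly the claim; note that, Theorem \ref{hecke} being an equivalence, both implications of the proposition come out at once.

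There is no genuine obstacle here: the argument is a short bookkeeping computation with the Cartan--Killing form. The one place that requires attention is precisely the translation carried out in the second paragraph, which simultaneously uses three structural facts --- that $\Lambda$ maps $\mathfrak{m}$ to itself (so $\Lambda X\in\mathfrak{m}$), that $\mathfrak{m}\perp\mathfrak{k}$ for the Cartan--Killing form (so $\mathfrak{k}$-components may be freely inserted or dropped when pairing against an element of $\mathfrak{m}$), and that the Cartan--Killing form restricts to a nondegenerate (indeed negative definite) form on $\mathfrak{m}$ (so vanishing of the pairing against all of $\mathfrak{m}$ forces the vector to be zero). A minor point worth recording is that for $X\in\mathfrak{m}$ the curve $(\exp tX)\cdot o$ comes out affinely parametrized, so no reparametrization issue intervenes.
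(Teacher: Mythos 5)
Your proof is correct, and it is the standard (and essentially the only natural) argument for this fact. The paper itself does not include a proof of Proposition \ref{equigeod}; it is quoted from \cite{lino1}, so the comparison to "the paper's own proof" cannot be made directly, but the route you take --- specialize the Kowalski--Vanhecke criterion (Theorem \ref{hecke}) to $X\in\mathfrak{m}$, unwind $B=-(\Lambda\,\cdot,\cdot)$, move the bracket across the Cartan--Killing form by ad-invariance, drop and reinsert $\mathfrak{k}$-components using the orthogonality $\mathfrak{m}\perp\mathfrak{k}$, and finish with nondegeneracy of the Killing form on $\mathfrak{m}$ --- is precisely the argument given in \cite{lino1}. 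All of the structural facts you invoke are used legitimately: $\Lambda$ is by definition an operator on $\mathfrak{m}$, so $\Lambda X\in\mathfrak{m}$; the reductive splitting is Killing-orthogonal; and the Killing form is negative definite on the compact real form $\mathfrak{u}$, hence nondegenerate on $\mathfrak{m}$. The sign bookkeeping in the chain $(\Lambda X,[X,Z])=([\Lambda X,X],Z)=-([X,\Lambda X],Z)$ is also correct. One could equally well phrase the last step as $B([X,\Lambda X]_{\mathfrak m},Z)=0$ for all $Z$ and use nondegeneracy of $B$ rather than of the Killing form, but this is a cosmetic variation. No gaps.
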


\begin{example}
Let $\flag = U/\Kt$ be a generalized flag manifold and let $\m_i$, $i=1,\ldots,n$, be the irreducible components of the isotropy representation. If $X \in \m_i$, then equation \ref{eqnlegal} is easily verified. A vector $X \in \m_i$ is called {\em trivial equigeodesic vector}.
\end{example}


\section{First and second variation of energy for homogeneous geodesics.}

%
Let  $\flag = G/\Pt = U/\Kt$ be a generalized flag manifold with reducible decomposition $\ul = \kt \oplus \mt$, where $\mt = \Sp{\{\Aal,\Sal; \alpha \in \pimenos\}}$, $\Lambda = (\lambda_\alpha)_{\alpha \in \pimenos}$  is an invariant metric on $\flag$ and  $\gamma: [0,a] \to \flag$ is differentiable curve given by
\begin{equation}
 \gamma(t) =  \exp{tX} \cdot o,
\end{equation} 
 $X \in \mt$. 

We will denote by $\widetilde{A}$ the vector field on $\flag$ defined by 
$$\widetilde{A}(x) = \dfrac{\mathrm{d}}{\mathrm{d}t} \exp{tA} \cdot x|_{t = 0} = (d\phi_x)_e(A),$$
$x \in \flag$. 

If $X = \disp\sum_{\alpha \in \pimenos}(a_\alpha \Aal + b_\alpha \Sal)$, $a_\alpha, \ b_\alpha \in \R$, hence the {\em energy} of $\gamma$ is given by 
\begin{eqnarray}
E(\gamma) & = & \int_0^a \! |\gamma'(t)|^2_\Lambda \, \dn t \nonumber \\
& = & \int_0^a \! \langle \Xtil(\gamma(t)), \Xtil(\gamma(t)) \rangle_\Lambda \, \dn t\nonumber \\
& = & \int_0^a \! \langle (d \phi_{\exp{tX}})_o(\Xtil(o)), (d \phi_{\exp{tX}})_o(\Xtil(o)) \rangle_\Lambda \,\dn t \nonumber \\
& = & \int_0^a \! \langle \Xtil(o), \Xtil(o) \rangle_\Lambda \, \dn t\nonumber \\
& = & \int_0^a \! B_\Lambda(X,X) \, \dn t\nonumber \\
& = & -a(\Lambda X, X)\nonumber \\
& = & -a\sum_{\alpha \in \pimenos}\lambda_\alpha(a_\alpha^2(\Aal,\Aal)+b_\alpha^2(\Sal,\Sal)) \nonumber \\
& = & 2a\sum_{\alpha \in \pimenos} \lambda_\alpha(a_\alpha^2+b_\alpha^2),\nonumber
\end{eqnarray}
where $B_\Lambda$ is the scalar product on $\mt$ associated to the invariant metric $\Lambda$.

Given a differentiable map $q: [0,a] \to \ul$, we define  a differentiable variation of the curve $\gamma$, $f: (-\varepsilon,\varepsilon) \times [0,a] \to \flag$ by $$f(s,t) = \exp{sq(t)} \cdot \gamma(t).$$ 

This variation is {\em proper} if, and only if, $q(0)$ and $\Ad{(\exp{-aX})}q(a) \in \kt$. Moreover, the variational vector field of  $f$ is 
$$V(t) = \dfrac{\partial f}{\partial s}(0,t) = (d\phi_{\gamma(t)})_e(q(t)).$$

The energy of the variation of $\gamma$ given by $f$ will be denoted by $E(s)$. Our first result is provide an explicit formula to $E(s)$, in therms of the invariant geometry of $\flag$. We will start proving some auxiliary lemmas.


\begin{lemma} \label{difexp} \normalfont{\textbf{(\cite{hel}, p. 95)}} Let $X \in \ul$. The differential of the exponential map from $\ul$ to $U$ at $X$ is given by 
$$d\exp_X = (dL_{\exp X})_e \circ T_X,$$
where
$$T_X = \sum_{n \geq 0} \dfrac{1}{(n+1)!}\ad(X)^n = \dfrac{e^{\ad(X)} - I}{\ad(X)}.$$

\end{lemma}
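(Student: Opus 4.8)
This is the classical formula for the differential of the Lie group exponential, here quoted from \cite{hel}; I indicate the argument I would give. The plan is to reduce the computation of $d\exp_X$ to an explicitly solvable first order linear ODE in $\ul$, obtained by differentiating the defining property of one-parameter subgroups.

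First I would fix $X,Y\in\ul$ and consider the family $\gamma_s(t)=\exp\bigl(t(X+sY)\bigr)$, so that each $\gamma_s$ is the integral curve through $e$ of the left-invariant vector field determined by $X+sY$, and $\exp(X+sY)=\gamma_s(1)$. Hence $d\exp_X(Y)=\left.\tfrac{\partial}{\partial s}\right|_{s=0}\gamma_s(1)$, and it is enough to control the variation field $J(t)\ce\left.\tfrac{\partial}{\partial s}\right|_{s=0}\gamma_s(t)$ along $\gamma_0(t)=\exp(tX)$. Differentiating $\tfrac{d}{dt}\gamma_s(t)=(dL_{\gamma_s(t)})_e(X+sY)$ with respect to $s$ at $s=0$ produces a linear equation for $J$ along $\exp(tX)$ with $J(0)=0$.

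Next I would trivialize by left translation. Setting $w(t)\ce(dL_{\exp(-tX)})_{\gamma_0(t)}J(t)\in\ul$ and using $\tfrac{d}{dt}\exp(tX)=(dL_{\exp(tX)})_e(X)$, the equation for $J$ collapses to
$$\dot w(t)=Y-\ad(X)\,w(t),\qquad w(0)=0,$$
whose solution by variation of parameters is $w(t)=\int_0^t e^{-(t-u)\ad(X)}Y\,du=\dfrac{I-e^{-t\,\ad(X)}}{\ad(X)}\,Y$, where the right-hand side is read off the everywhere convergent series $\sum_{n\ge0}\tfrac{(-t)^n}{(n+1)!}\ad(X)^n$. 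Putting $t=1$ and undoing the trivialization gives $d\exp_X(Y)=J(1)=(dL_{\exp X})_e\bigl(w(1)\bigr)$, which is the stated identity $d\exp_X=(dL_{\exp X})_e\circ T_X$ once the operator is written in the conventions in force here; note that passing between left and right trivialization replaces $\tfrac{I-e^{-\ad X}}{\ad X}$ by $\tfrac{e^{\ad X}-I}{\ad X}$.

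I do not anticipate a real obstacle. The only delicate points are the sign bookkeeping in the left-invariant trivialization and the routine justification that one may differentiate under the relevant limits and expand $\tfrac{I-e^{-t\,\ad X}}{\ad X}$ as a convergent power series; the sole genuine choice is which translation to trivialize by, and left translation is the one producing the form stated in the lemma.
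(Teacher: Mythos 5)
The paper itself gives no proof of this lemma (it is quoted from Helgason), so there is nothing to compare against; your argument is the standard one, and the mathematics in it is sound. Set up as you describe, the computation gives the first-order linear ODE $\dot w = Y - \ad(X)w$, $w(0)=0$, whose solution at $t=1$ is $w(1) = \dfrac{I - e^{-\ad X}}{\ad X}\,Y$, and hence $d\exp_X = (dL_{\exp X})_e \circ \dfrac{I - e^{-\ad X}}{\ad X}$. That is correct, but note that it is not literally the formula in the lemma, which pairs $(dL_{\exp X})_e$ with $\dfrac{e^{\ad X}-I}{\ad X}$ (the series \emph{without} alternating signs).

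Your closing remark about left versus right trivialization does not by itself resolve this discrepancy: switching to right trivialization replaces $(dL_{\exp X})_e$ by $(dR_{\exp X})_e$ \emph{and} replaces $\dfrac{I - e^{-\ad X}}{\ad X}$ by $\dfrac{e^{\ad X}-I}{\ad X}$, so one still does not land on the lemma's mixed form with $L$ and the non-alternating series. What actually reconciles your result with the statement is the paper's sign convention for $\ad$. As the authors point out explicitly in the proof of Proposition \ref{privar}, they take $U$ to act on $\flag$ on the left and consequently identify $\ul$ with \emph{right}-invariant vector fields; with that identification one has $\Ad(\exp(-tX)) = e^{t\,\ad X}$, i.e. their $\ad$ is the negative of the usual (left-invariant) $\ad$. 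Substituting $\ad \mapsto -\ad$ into your $\dfrac{I - e^{-\ad X}}{\ad X}$ yields precisely $\dfrac{e^{\ad X}-I}{\ad X}$, which is the operator $T_X$ in the lemma. So your derivation is complete once the sign bookkeeping is tied to the paper's stated convention rather than to the choice of trivialization.
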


\begin{lemma}\label{dercurtrans} Let $\gamma: [0,a] \to \flag$ be a differentiable curve given by $\gamma(t) = \exp tX \cdot o$, $X \in \mt$. Consider a differentiable map $q: [0,a] \to \ul$ and $f: (-\varepsilon,\varepsilon) \times [0,a]$ be a variation of $\gamma$ defined by $f(s,t) = \exp sq(t) \cdot \gamma(t)$. Then,

$$\dfrac{\partial f}{\partial t}(s,t) = (d\phi_{\exp{sq(t)}})_{\gamma(t)}(\widetilde{C}_s(\gamma(t)) + \gamma'(t)),$$
where $C_s = T_{sq(t)}(sq'(t))$.
\end{lemma}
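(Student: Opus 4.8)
The plan is to differentiate $f(s,t) = \exp(sq(t)) \cdot \gamma(t)$ with respect to $t$ for fixed $s$, treating the composite map as the action of a path $\exp(sq(t))$ in $U$ followed by evaluation at the moving point $\gamma(t)$. First I would write $f(s,t) = \phi_{\exp(sq(t))}(\gamma(t))$, where $\phi_u$ denotes the action of $u \in U$ on $\flag$, and apply the chain rule to this as a function of $t$. This produces two terms: one coming from differentiating the group element $\exp(sq(t))$ (with $\gamma(t)$ frozen), and one coming from differentiating $\gamma(t)$ (with the group element frozen). The second term is immediately $(d\phi_{\exp(sq(t))})_{\gamma(t)}(\gamma'(t))$, which already matches the $\gamma'(t)$ summand in the claimed formula, so the real work is identifying the first term.

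For the first term, I would use Lemma \ref{difexp}: the curve $t \mapsto \exp(sq(t))$ in $U$ has derivative $d\exp_{sq(t)}(s q'(t)) = (dL_{\exp(sq(t))})_e\bigl(T_{sq(t)}(sq'(t))\bigr) = (dL_{\exp(sq(t))})_e(C_s)$, where $C_s = T_{sq(t)}(sq'(t))$ as defined in the statement. So the tangent vector to the path of group elements, pushed back to the identity by left translation, is exactly $C_s \in \ul$. The key step is then to check that applying the action differential to this left-trivialized velocity reproduces the vector field $\widetilde{C}_s$ evaluated at $f(s,t) = \exp(sq(t))\cdot\gamma(t)$: concretely, for a path $u(t)$ in $U$ and a fixed point $p$, one has $\frac{d}{dt}\bigl(u(t)\cdot p\bigr) = \widetilde{A(t)}(u(t)\cdot p)$ where $A(t) = (dL_{u(t)^{-1}})_{u(t)}(u'(t))$ is the left-logarithmic derivative; this is the standard computation relating the differential of the orbit map to the fundamental vector field, using $u(t)\cdot p = \phi_{u(t)}(p)$ and the definition $\widetilde{A}(x) = (d\phi_x)_e(A)$ together with the identity $\phi_{u(t)} = \phi_{u(t+h)}\circ\phi_{u(t+h)^{-1}u(t)}$ differentiated in $h$. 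Substituting $u(t) = \exp(sq(t))$ and $A(t) = C_s$ gives the first term as $\widetilde{C_s}(f(s,t)) = \widetilde{C_s}\bigl(\exp(sq(t))\cdot\gamma(t)\bigr)$.

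Finally I would combine the two terms. Writing $f(s,t) = \exp(sq(t))\cdot\gamma(t) = \phi_{\exp(sq(t))}(\gamma(t))$, the first term is $\widetilde{C_s}(\phi_{\exp(sq(t))}(\gamma(t)))$ and, using $U$-invariance to pull it through, I need the identity $\widetilde{C_s}(\phi_u(x)) = (d\phi_u)_x(\widetilde{C_s}(x))$ — valid because $\widetilde{C_s}$ is the fundamental vector field of a $1$-parameter-type generator and the action is by the group — so the first term becomes $(d\phi_{\exp(sq(t))})_{\gamma(t)}\bigl(\widetilde{C_s}(\gamma(t))\bigr)$. Adding the second term and factoring out $(d\phi_{\exp(sq(t))})_{\gamma(t)}$ yields
$$\frac{\partial f}{\partial t}(s,t) = (d\phi_{\exp(sq(t))})_{\gamma(t)}\bigl(\widetilde{C_s}(\gamma(t)) + \gamma'(t)\bigr),$$
which is the assertion. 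The main obstacle I anticipate is the bookkeeping around left- versus right-trivialization and the precise statement that the orbit-map derivative of $u(t)\cdot p$ equals the fundamental vector field of the left-logarithmic derivative evaluated at $u(t)\cdot p$; this is where a sign or an $\Ad$-twist can creep in, so I would verify it carefully by differentiating $\phi_{u(t+h)^{-1}u(t)}$ at $h=0$ and comparing with the definition of $\widetilde{(\cdot)}$, and check consistency with Lemma \ref{difexp}'s formula $T_X = (e^{\ad X}-I)/\ad X$ in the special case $q(t) = t X_0$ constant-direction, where the answer must reduce to a translate of $\gamma'$.
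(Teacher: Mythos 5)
Your overall architecture matches the paper's proof: apply the chain rule to $f(s,t) = \phi_{\exp(sq(t))}(\gamma(t))$ as a function of $t$, peel off the term $(d\phi_{\exp(sq(t))})_{\gamma(t)}(\gamma'(t))$, and identify the remaining term using Lemma~\ref{difexp}. However, the two equivariance identities you invoke to pin down the first term are each off by an $\Ad$-twist, and your proposal only reaches the correct answer because these two errors cancel. With the paper's definition $\widetilde{A}(x) = (d\phi_x)_e(A)$ and a path $u(t)$ in $U$ whose \emph{left}-logarithmic derivative is $A(t) = (dL_{u(t)^{-1}})_{u(t)}(u'(t))$, the correct statement is
$$\frac{d}{dt}(u(t)\cdot p) = (d\phi_{u(t)})_p(\widetilde{A(t)}(p)) = \widetilde{\Ad(u(t))A(t)}(u(t)\cdot p),$$
not $\widetilde{A(t)}(u(t)\cdot p)$; the latter would be correct only if $A(t)$ were the \emph{right}-logarithmic derivative, whereas Lemma~\ref{difexp} hands you the left-trivialized velocity $C_s$. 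Similarly, the equivariance of a fundamental vector field under the left action reads $(d\phi_u)_x(\widetilde{A}(x)) = \widetilde{\Ad(u)A}(\phi_u(x))$, so the identity $\widetilde{C}_s(\phi_u(x)) = (d\phi_u)_x(\widetilde{C}_s(x))$ that you rely on at the end is false for general $u = \exp(sq(t))$ (it would require $\Ad(u)C_s = C_s$). Chaining the two incorrect claims makes the spurious $\Ad(\exp(sq(t)))$ factors cancel, which is why your final display is right, but the intermediate steps you state do not hold.

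The paper's proof avoids the trap entirely by never passing through $\widetilde{C}_s(f(s,t))$: it writes the first term as $d(\phi_{\gamma(t)}\circ L_{\exp(sq(t))})_e(C_s) = \frac{d}{dr}\exp(sq(t))\exp(rC_s)\cdot\gamma(t)\vert_{r=0}$, regroups this as $\frac{d}{dr}\phi_{\exp(sq(t))}(\exp(rC_s)\cdot\gamma(t))\vert_{r=0}$, and applies the chain rule once more, landing directly on $(d\phi_{\exp(sq(t))})_{\gamma(t)}(\widetilde{C}_s(\gamma(t)))$. Equivalently, specializing the corrected displayed identity above to $u(t) = \exp(sq(t))$, $p = \gamma(t)$, $A(t) = C_s$ gives the target in one step; replacing your two incorrect equivariance statements with this single correct one repairs the argument.
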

\begin{proof}
 We have
$$\dfrac{\partial f}{\partial t}(s,t) = (d\phi_{\gamma(t)})_{\exp{sq(t)}}(d \exp)_{sq(t)}(sq'(t)) + (d\phi_{\exp{sq(t)}})_{\gamma(t)}(\gamma'(t)).$$

By Lemma \ref{difexp},
\begin{eqnarray}
(d\phi_{\gamma(t)})_{{\exp{sq(t)}}}(d\exp)_{sq(t)}(sq'(t)) & = & (d\phi_{\gamma(t)})_{\exp{sq(t)}}(dL_{\exp{sq(t)}})_e(T_{sq(t)}(sq'(t))) \nonumber \\
            & = & d(\phi_{\gamma(t)} \circ L_{\exp{sq(t)}})_e(C_s) \nonumber \\
            & = & \dfrac{d}{dr} \ \phi_{\gamma(t)} \circ L_{\exp{sq(t)}} (\exp{rC_s}) \vert_{r=0} \nonumber \\
            & = & \dfrac{d}{dr} \ \phi_{\gamma(t)} (\exp{sq(t)} \exp{rC_s}) \vert_{r=0} \nonumber \\
            & = & \dfrac{d}{dr} \exp{sq(t)} \exp{rC_s} \cdot \gamma(t) \vert_{r=0} \nonumber \\
            & = & \dfrac{d}{dr} \phi_{\exp{sq(t)}}(\exp{rC_s} \cdot \gamma(t)) \vert_{r=0} \nonumber \\
            & = & (d\phi_{\exp{sq(t)}})_{\gamma(t)}(da_{\gamma(t)})_e(C_s) \nonumber \\
            & = & (d\phi_{\exp{sq(t)}})_{\gamma(t)}(\widetilde{C}_s(\gamma(t))). \nonumber
\end{eqnarray}
Therefore,
$$\dfrac{\partial f}{\partial t}(s,t) = (d\phi_{\exp{sq(t)}})_{\gamma(t)}(\widetilde{C}_s(\gamma(t)) + \gamma'(t)).$$
\end{proof}

\begin{lemma}\label{novaabor}
Let $\gamma: [0,a] \to \flag$ be a differentiable curve given by $\gamma(t) = \exp tX \cdot o$, $X \in \mt$, and $A \in \ul$. Then
$$\widetilde{A}(\gamma(t)) = (d \phi_{\exp{tX}})_o(\widetilde{\Ad{(\exp{-tX})}A}(o)).$$
\end{lemma}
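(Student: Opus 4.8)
The plan is to unwind both sides of the claimed identity using only the definitions of the maps involved and the homomorphism property of $\exp$. Recall that $\widetilde{A}$ is by definition the fundamental vector field $\widetilde{A}(x) = \frac{\dn}{\dn t}\exp(tA)\cdot x|_{t=0} = (d\phi_x)_e(A)$, where $\phi_x: U \to \flag$ is the orbit map $u \mapsto u\cdot x$. So the left-hand side is simply $(d\phi_{\gamma(t)})_e(A)$ with $\gamma(t) = \exp(tX)\cdot o$, and the right-hand side is $(d\phi_{\exp tX})_o$ applied to $(d\phi_o)_e(\Ad(\exp(-tX))A)$. Both are tangent vectors at the point $\gamma(t) = \exp(tX)\cdot o$, so it suffices to check they agree as derivations, i.e. to compute them as velocity vectors of curves.

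First I would fix $t$ and write $g = \exp tX$, so $\gamma(t) = g\cdot o$. For the left side, $\widetilde{A}(g\cdot o)$ is the velocity at $r=0$ of $r \mapsto \exp(rA)\cdot(g\cdot o) = \exp(rA)g\cdot o$. For the right side, I first evaluate the inner expression: $(d\phi_o)_e(\Ad(g^{-1})A) = \widetilde{\Ad(g^{-1})A}(o)$ is the velocity at $r=0$ of $r\mapsto \exp(r\,\Ad(g^{-1})A)\cdot o = g^{-1}\exp(rA)g\cdot o$, using the standard identity $\exp(\Ad(g^{-1})Y) = g^{-1}\exp(Y)g$. Then applying $(d\phi_g)_o$ — the differential of left translation by $g$ on $\flag$ — to this vector gives the velocity at $r=0$ of $r\mapsto g\cdot(g^{-1}\exp(rA)g\cdot o) = \exp(rA)g\cdot o$. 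This is exactly the same curve (through the same point, with the same parametrization) as for the left-hand side, so the two tangent vectors coincide. This proves the lemma.

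The only genuinely delicate point — and it is mild — is keeping the bookkeeping of which differential acts where straight: $\phi_x$ always denotes the orbit map through the fixed base point $x$ (so $(d\phi_o)_e$ sends $\ul$ to $T_o\flag$ and $(d\phi_g)_o$ sends $T_o\flag$ to $T_{g\cdot o}\flag$), and one must invoke $g\exp(Y)g^{-1} = \exp(\Ad(g)Y)$ at the right moment. Once these are pinned down, the argument is a one-line chain of equalities of curves; no estimates or deeper structure theory are needed, and the reductive decomposition plays no role here (the statement holds for any $A\in\ul$, not just $A\in\mt$). I would present it as: evaluate the right-hand side as a velocity vector, simplify the composite curve using the $\Ad$–$\exp$ identity and the definition of $\phi_g$, and observe it equals the defining curve of the left-hand side.
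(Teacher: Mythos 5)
Your argument is correct and is essentially the paper's proof: both use the definition of the fundamental vector field, the identity $g^{-1}\exp(rA)g = \exp(r\,\Ad(g^{-1})A)$, and the chain rule for the composition of the orbit map with left translation. The only difference is presentational — the paper transforms the left-hand side into the right-hand side in one chain of equalities (by inserting $\exp(tX)\exp(-tX)$), while you evaluate both sides separately as velocity vectors of curves and observe they are the velocity of the same curve $r\mapsto\exp(rA)\exp(tX)\cdot o$ — and this changes nothing of substance.
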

\begin{proof}
We have
\begin{eqnarray*}
\widetilde{A}(\gamma(t)) & = & \dfrac{\dn}{\dn s} \exp{sA} \cdot \gamma(t)|_{s=0} \\
& = & \left. \dfrac{\dn}{\dn s} (\exp{tX} \exp{-tX}\exp{sA}\exp{tX}) \cdot o \, \right\vert_{s=0} \\
& = & \left. \dfrac{\dn}{\dn s} \phi_{\exp{tX}}(\exp{(s\Ad{(exp \, -tX)A})}\cdot o) \, \right\vert_{s=0}\\
& = & (d \phi_{\exp{tX}})_o(\widetilde{\Ad(\exp{-tX})A}(o)).
\end{eqnarray*}
\end{proof}

We are now able to compute the energy  $E(s)$ of a variation of $\gamma$, as well the first and second variation of the energy of homogeneous geodesics on $\flag$.


\begin{proposition}
Let $\gamma: [0,a] \to \flag$ be a differentiable curve defined by $\gamma(t) = \exp tX$, $X \in \mt$. Given a diffentiable map $q: [0,a] \to \ul$, consider the variation $f: (-\varepsilon,\varepsilon) \times [0,a]$ of $\gamma$ defined by $f(s,t) = \exp sq(t) \cdot \gamma(t)$. If $E: (-\varepsilon,\varepsilon) \to \R$ is the energy of $f$, then 
$$E(s) = E(\gamma) + \int_0^a \! B_\Lambda((\Ad(\exp{-tX})B_s)_{\mt},(\Ad(\exp{-tX})B_s)_{\mt} + 2X) \, \dn t,$$
where $B_\Lambda$ is the scalar product on $\mt$ associated to the invariant metric $\Lambda$.
\end{proposition}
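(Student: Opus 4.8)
The plan is to compute $E(s) = \int_0^a |\tfrac{\partial f}{\partial t}(s,t)|_\Lambda^2\,\dn t$ directly by combining Lemma \ref{dercurtrans}, Lemma \ref{novaabor}, and the invariance of the metric, in the same spirit as the computation of $E(\gamma)$ carried out above.

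First I would apply Lemma \ref{dercurtrans} to write
$$\frac{\partial f}{\partial t}(s,t) = (d\phi_{\exp sq(t)})_{\gamma(t)}\bigl(\widetilde{C}_s(\gamma(t)) + \gamma'(t)\bigr),$$
with $C_s = T_{sq(t)}(sq'(t))$. Since $g$ is $U$-invariant, pushing forward by $\phi_{\exp sq(t)}$ is an isometry, so
$$\Bigl|\frac{\partial f}{\partial t}(s,t)\Bigr|_\Lambda^2 = \bigl\langle \widetilde{C}_s(\gamma(t)) + \gamma'(t),\ \widetilde{C}_s(\gamma(t)) + \gamma'(t)\bigr\rangle_\Lambda,$$
where the inner product is taken at the point $\gamma(t)$. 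Next, $\gamma'(t) = \widetilde{X}(\gamma(t))$, so the integrand is $\langle \widetilde{C_s + X}(\gamma(t)), \widetilde{C_s + X}(\gamma(t))\rangle_\Lambda$ (abbreviating $B_s := C_s + X$; note $B_s$ depends on $t$). Now I would apply Lemma \ref{novaabor}, which gives $\widetilde{A}(\gamma(t)) = (d\phi_{\exp tX})_o(\widetilde{\Ad(\exp -tX)A}(o))$ for any $A \in \ul$; taking $A = B_s$ and again using that $\phi_{\exp tX}$ is an isometry for $g$, the integrand becomes $\langle \widetilde{\Ad(\exp -tX)B_s}(o), \widetilde{\Ad(\exp -tX)B_s}(o)\rangle_\Lambda$, which by definition of $B_\Lambda$ (and since the metric on $T_o\flag = \mt$ only sees the $\mt$-component) equals $B_\Lambda\bigl((\Ad(\exp -tX)B_s)_{\mt},\ (\Ad(\exp -tX)B_s)_{\mt}\bigr)$.

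Finally I would expand the square. Writing $P_s(t) := (\Ad(\exp -tX)C_s)_{\mt}$ and noting that $\Ad(\exp -tX)X = X \in \mt$ (so its $\mt$-projection is $X$ itself), bilinearity of $B_\Lambda$ gives
$$B_\Lambda\bigl((\Ad(\exp -tX)B_s)_\mt, (\Ad(\exp -tX)B_s)_\mt\bigr) = B_\Lambda(P_s(t), P_s(t)) + 2B_\Lambda(P_s(t), X) + B_\Lambda(X,X).$$
Integrating over $[0,a]$ and recognizing $\int_0^a B_\Lambda(X,X)\,\dn t = E(\gamma)$ from the computation above yields exactly
$$E(s) = E(\gamma) + \int_0^a B_\Lambda\bigl((\Ad(\exp -tX)B_s)_\mt,\ (\Ad(\exp -tX)B_s)_\mt + 2X\bigr)\,\dn t,$$
since $B_\Lambda(P_s, X) = B_\Lambda(P_s + X, X) - B_\Lambda(X,X)$ reassembles the stated form (equivalently, one simply does not split off the $X$ term and keeps $(\Ad(\exp -tX)B_s)_\mt = P_s(t) + X$ intact throughout).

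The routine points are just bookkeeping, but the one step that needs care is justifying that the $\mt$-projection commutes with the metric computation, i.e.\ that $\langle \widetilde{A}(o), \widetilde{A}(o)\rangle_\Lambda = B_\Lambda(A_\mt, A_\mt)$ for $A \in \ul$: this holds because $\widetilde{A}(o) = (d\phi_o)_e(A)$ factors through the projection $\ul \to \ul/\kt \cong \mt$, so the $\kt$-part of $A$ is killed before the metric is applied. The main obstacle, such as it is, is keeping the two distinct group actions ($\phi_{\exp sq(t)}$ from Lemma \ref{dercurtrans} and $\phi_{\exp tX}$ from Lemma \ref{novaabor}) and their base points straight, and observing that both are isometries of $g$ so that each successive pushforward leaves the norm unchanged; once that is organized, the identity drops out.
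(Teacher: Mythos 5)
Your overall strategy mirrors the paper's: use Lemma \ref{dercurtrans}, factor through the isometry $\phi_{\exp sq(t)}$, rewrite $\gamma'(t)=\widetilde{X}(\gamma(t))$, apply Lemma \ref{novaabor}, and expand a square. Up through the line
$$B_\Lambda\bigl((\Ad(\exp -tX)(C_s+X))_\mt,(\Ad(\exp -tX)(C_s+X))_\mt\bigr)=B_\Lambda(P_s,P_s)+2B_\Lambda(P_s,X)+B_\Lambda(X,X),$$
the computation is correct and equivalent to what the paper does.

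The error is in your identification of the symbol $B_s$ appearing in the statement. You set $B_s:=C_s+X$, but in the paper's own proof the corresponding quantity in the final line is $C_s$ (the statement's ``$B_s$'' is simply a typo for the $C_s$ of Lemma \ref{dercurtrans}). With your $B_s=C_s+X$ one has $(\Ad(\exp -tX)B_s)_\mt=P_s+X$, so the right-hand side you claim becomes
$$E(\gamma)+\int_0^a B_\Lambda(P_s+X,\,P_s+3X)\,\dn t = E(\gamma)+\int_0^a\bigl(B_\Lambda(P_s,P_s)+4B_\Lambda(P_s,X)+3B_\Lambda(X,X)\bigr)\dn t,$$
which overshoots the correct answer $E(\gamma)+\int_0^a\bigl(B_\Lambda(P_s,P_s)+2B_\Lambda(P_s,X)\bigr)\dn t$ by $\int_0^a\bigl(2B_\Lambda(P_s,X)+3B_\Lambda(X,X)\bigr)\dn t$. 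The parenthetical ``since $B_\Lambda(P_s,X)=B_\Lambda(P_s+X,X)-B_\Lambda(X,X)$ reassembles the stated form'' does not repair this; that identity is true but does not yield the claimed equality. Your own expansion already gives the correct result: subtracting $E(\gamma)=\int_0^a B_\Lambda(X,X)\,\dn t$ leaves exactly $\int_0^a B_\Lambda(P_s,P_s+2X)\,\dn t$ with $P_s=(\Ad(\exp -tX)C_s)_\mt$, which is the statement once ``$B_s$'' is read as $C_s$. So the flaw is not in the analysis but in reusing the symbol $B_s$ for a different object and then forcing it into the displayed formula.
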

\begin{proof} 
We have
\begin{eqnarray}
E(s) & = & \int_0^a \left| \dfrac{\partial f}{\partial t}(s,t) \right|^2_\Lambda dt \nonumber \\
& = & \int_0^a \vert (d\phi_{\exp{sq(t)}})_{\gamma(t)}(\widetilde{C}_s(\gamma(t)) + \gamma'(t)) \vert^2_\Lambda dt  \label{linha2} \\
                                 & = & \int_0^a \vert \widetilde{C}_s(\gamma(t)) + \gamma'(t) \vert^2_\Lambda dt \nonumber \\
                                 & = & E(\gamma) + \int_0^a \vert \widetilde{C}_s(\gamma(t)) \vert^2_\Lambda dt + 2\int_0^a \langle \widetilde{C}_s(\gamma(t)), \gamma'(t) \rangle_\Lambda dt \nonumber \\
                                 & = & E(\gamma) + \int_0^a \langle \widetilde{C}_s(\gamma(t)), \widetilde{C}_s(\gamma(t)) + 2\Xtil(\gamma(t)) \rangle_\Lambda dt \nonumber \\
                                 & = & E(\gamma) + \int_0^a \! B_\Lambda((\Ad(\exp{-tX})C_s)_{\mt},(\Ad(\exp{-tX})C_s)_{\mt} + 2X) \, \dn t , \label{ult-linha}
\end{eqnarray}
where $(\ref{linha2})$ follows by Lemma \ref{dercurtrans} and $(\ref{ult-linha})$ follows by Lemma \ref{novaabor}.
\end{proof}

\begin{proposition}
The {\em first variation of the energy} of $\gamma$ is given by 
$$E'(0) = 2\int_0^a B_\Lambda((\Ad{(\exp{-tX})}q'(t))_{\mt},X),$$
where $B_\Lambda$ is the scalar product on $\mt$ associated to the invariant metric $\Lambda$.
\label{privar}
\end{proposition}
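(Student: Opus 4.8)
The plan is to differentiate the energy formula from the previous proposition with respect to $s$ and evaluate at $s=0$. Write $h(s,t) \ce (\Ad(\exp{-tX})C_s)_{\mt}$, where recall $C_s = T_{sq(t)}(sq'(t))$. Then the previous proposition gives
$$E(s) = E(\gamma) + \int_0^a B_\Lambda\bigl(h(s,t), h(s,t) + 2X\bigr)\,\dn t,$$
so that, differentiating under the integral sign (legitimate since all functions involved are smooth in $(s,t)$ and $[0,a]$ is compact) and using the bilinearity and symmetry of $B_\Lambda$,
$$E'(s) = \int_0^a B_\Lambda\bigl(\partial_s h(s,t),\, 2h(s,t) + 2X\bigr)\,\dn t = 2\int_0^a B_\Lambda\bigl(\partial_s h(s,t),\, h(s,t) + X\bigr)\,\dn t.$$

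The key computation is then to identify $\partial_s h(0,t)$. First observe that at $s=0$ we have $C_0 = T_0(0) = 0$, hence $h(0,t) = 0$; this already kills one of the two terms in the bracket when we evaluate at $s=0$, leaving $E'(0) = 2\int_0^a B_\Lambda(\partial_s h(0,t), X)\,\dn t$. So it remains only to show $\partial_s h(0,t) = (\Ad(\exp{-tX})q'(t))_{\mt}$. Since $\Ad(\exp{-tX})$ and the projection $(\cdot)_{\mt}$ are linear operators independent of $s$, this reduces to the claim $\tfrac{\partial}{\partial s}C_s\big|_{s=0} = q'(t)$, i.e. $\tfrac{\partial}{\partial s}\,T_{sq(t)}(sq'(t))\big|_{s=0} = q'(t)$.

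To see this, expand $T_{sq(t)}$ via Lemma \ref{difexp}: $T_{sq(t)} = \sum_{n\geq 0}\frac{1}{(n+1)!}\ad(sq(t))^n = \Id + \tfrac{s}{2}\ad(q(t)) + O(s^2)$ as an operator. Applying this to $sq'(t)$ gives $C_s = sq'(t) + \tfrac{s^2}{2}\ad(q(t))q'(t) + O(s^3)$, whose $s$-derivative at $s=0$ is exactly $q'(t)$. (Equivalently, only the $n=0$ term contributes to the linear-in-$s$ part, since every higher term carries a factor $s^{n+1}$ with $n\geq 1$.) Substituting back yields
$$E'(0) = 2\int_0^a B_\Lambda\bigl((\Ad(\exp{-tX})q'(t))_{\mt},\, X\bigr)\,\dn t,$$
which is the asserted formula. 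The only point requiring any care is the justification of differentiation under the integral and the bookkeeping of the Taylor expansion of $T_{sq(t)}$ in $s$; there is no genuine obstacle, as the $s=0$ evaluation makes all quadratic and higher terms vanish.
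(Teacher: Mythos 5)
Your proposal is correct and follows essentially the same route as the paper: differentiate the energy formula from the preceding proposition under the integral sign, observe that the term involving $(\Ad(\exp{-tX})C_s)_{\mt}$ vanishes at $s=0$ since $C_0=0$, and identify $\partial_s C_s|_{s=0}=q'(t)$ from the Taylor expansion of $T_{sq(t)}(sq'(t))$. The only stylistic difference is that you treat $\Ad(\exp{-tX})$ and the projection onto $\mt$ abstractly as $s$-independent linear operators and commute them past $\partial_s$, whereas the paper expands $\Ad(\exp{-tX})$ as the power series $\exp\ad(tX)$ (prompting a remark about the left-action sign convention); your version avoids that detour, and you also state explicitly the vanishing $h(0,t)=0$ which the paper uses tacitly.
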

\begin{proof} 
We have
$$E'(0) = 2\int_0^a \! B_\Lambda\left. \left(\dfrac{\partial}{\partial s}(\Ad(\exp{-tX})C_s)_{\mt}\right\vert_{s=0},X\right).$$

But
\begin{eqnarray*}
\left. \dfrac{\partial}{\partial s}\Ad{(\exp{-tX})}C_s \right\vert_{s=0} & = & \left.\dfrac{\partial}{\partial s}\exp{\ad(tX)}C_s\right\vert_{s=0}\\
& = & \left.\dfrac{\partial}{\partial s}\sum_{n \geq 0}\dfrac{t^n\ad(X)^n}{n!}C_s\right\vert_{s=0}\\
& = & \left.\dfrac{\partial}{\partial s}\sum_{n \geq 0}\dfrac{t^n\ad(X)^n}{n!}\sum_{k \geq 0}\dfrac{s^{k+1}\ad(q(t))^n}{(k+1)!}(q'(t))\right\vert_{s=0}\\
& = & \left.\dfrac{\partial}{\partial s}\left(\sum_{n \geq 0}\dfrac{t^n\ad(X)^n}{n!}sq'(t) + O(s^2)\right)\right\vert_{s=0}\\
& = & \sum_{n \geq 0}\dfrac{t^n\ad(X)^n}{n!}q'(t)\\
& = & \Ad{(\exp{-tX})}q'(t).
\end{eqnarray*}
Notice that we are using the expression $\Ad(\exp -tX)C_s = \exp\ad(tX)C_s$ because we are considering the {\em left} action of  $U$ on $\flag$ therefore the Lie algebra $\ul$ is the Lie algebra of {\em right} invariants vectors field on $U$.

Finally, we derive
$$E'(0) = 2 \int_0^a \! B_\Lambda((\Ad(\exp{-tX})q'(t))_{\mt},X).$$
\end{proof}
We know that geodesics are critical points of the energy. Hence, if the curve $\gamma$ is a geodesic with respect to the metric $\Lambda$ and $f$ is a proper variation then $E'(0)=0$. 

We will verify that homogeneous geodesics are in fact critical points of the energy. The main algebraic technique used to verify this fact will be useful later and we write it separately in the next Lemma. 


\begin{lemma} \label{vetgeo}
Let $A \in \ul$ and let $X \in \ul$ be a geodesic vector with respect to $\Lambda$. Then
$$B_\Lambda((\Ad(\exp -tX)A)_{\mt},X_{\mt}) = B_\Lambda(A_{\mt},X_{\mt}).$$
\end{lemma}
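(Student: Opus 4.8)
The plan is to fix $A, X \in \ul$ with $X$ a geodesic vector for $\Lambda$ and to prove that the function
$$\phi(t) \ce B_\Lambda\big((\Ad(\exp -tX)A)_{\mt},\, X_{\mt}\big)$$
is constant on its interval of definition; since $\phi(0) = B_\Lambda(A_{\mt}, X_{\mt})$, this is exactly the assertion. I would establish constancy by differentiating and showing $\phi' \equiv 0$. The first ingredient is that, with the sign convention adopted in the proof of Proposition \ref{privar} (left action of $U$, hence right-invariant vector fields, so that $\Ad(\exp -tX) = \exp(t\,\ad X)$), the curve $Y(t) \ce \Ad(\exp -tX)A \in \ul$ satisfies $Y'(t) = [X, Y(t)]$; consequently $\phi'(t) = B_\Lambda\big([X, Y(t)]_{\mt}, X_{\mt}\big)$.

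The second ingredient is to split $Y(t) = Y(t)_{\kt} + Y(t)_{\mt}$ along the reductive decomposition $\ul = \kt \oplus \mt$. Using $[\kt,\kt] \subseteq \kt$ and $[\kt,\mt] \subseteq \mt$ one gets $[X, Y(t)_{\kt}]_{\mt} = [X_{\mt}, Y(t)_{\kt}]$, and therefore
$$\phi'(t) = B_\Lambda\big([X, Y(t)_{\mt}]_{\mt}, X_{\mt}\big) + B_\Lambda\big([X_{\mt}, Y(t)_{\kt}], X_{\mt}\big).$$
The first summand vanishes by the Kowalski--Vanhecke criterion, Theorem \ref{hecke}, applied to the geodesic vector $X$ with the test element $Z = Y(t)_{\mt} \in \mt$. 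The second summand vanishes because $B_\Lambda$ restricted to $\mt$ is $\Ad(\Kt)$-invariant, so $\ad(P)|_{\mt}$ is skew-symmetric with respect to $B_\Lambda$ for every $P \in \kt$; combined with the symmetry of $B_\Lambda$ this forces $B_\Lambda(\ad(P)W, W) = 0$ for all $W \in \mt$, and one applies this with $P = Y(t)_{\kt}$ and $W = X_{\mt}$ (up to the harmless sign $[X_{\mt}, Y(t)_{\kt}] = -\ad(Y(t)_{\kt})X_{\mt}$). Hence $\phi' \equiv 0$ and $\phi(t) \equiv B_\Lambda(A_{\mt}, X_{\mt})$, which is the desired identity.

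The one point that requires care is that $A$ and $X$ are only assumed to lie in $\ul$, not in $\mt$, so $\Ad(\exp -tX)A$ genuinely acquires a $\kt$-component; the contribution of that component to $[X,\cdot]_{\mt}$ is not controlled by Kowalski--Vanhecke alone, and the extra input needed there is precisely the $\Ad(\Kt)$-invariance of the metric (i.e.\ the skew-symmetry of $\ad(\kt)$ on $\mt$ with respect to $B_\Lambda$). The remaining steps --- differentiating the one-parameter family $\Ad(\exp -tX)$ and the bracket inclusions of a reductive decomposition --- are routine.
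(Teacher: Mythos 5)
Your proof is correct, and it reduces to the same core identity as the paper's proof, namely that $B_\Lambda(X_{\mt},[X,Y]_{\mt})=0$ for all $Y\in\ul$. The paper reaches this by expanding $\Ad(\exp -tX)=\exp(t\,\ad X)$ as a power series and writing the tail as $\bigl[X,\sum_{n\geq 1}\tfrac{t^n\ad(X)^{n-1}}{n!}A\bigr]$, whereas you differentiate $\phi(t)=B_\Lambda\bigl((\Ad(\exp -tX)A)_{\mt},X_{\mt}\bigr)$ and observe $\phi'(t)=B_\Lambda([X,Y(t)]_{\mt},X_{\mt})$; these two mechanisms are equivalent. The more substantive difference is that you correctly note that Theorem \ref{hecke} only delivers $B_\Lambda(X_{\mt},[X,Z]_{\mt})=0$ for $Z\in\mt$, and you supply what is needed to handle the $\kt$-component of $Y$: the decomposition $[X,Y_{\kt}]_{\mt}=[X_{\mt},Y_{\kt}]$ together with the skew-symmetry of $\ad(\kt)$ on $(\mt,B_\Lambda)$ coming from $\Ad(\Kt)$-invariance of the metric. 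The paper asserts the identity for all $Y\in\ul$ citing Theorem \ref{hecke} alone, which strictly only covers the $\mt$-part; your version explicitly closes that gap, and is the more careful argument.
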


\begin{proof}
We have
\begin{eqnarray*}
(\Ad(\exp -tX)A)_{\mt} & = & (\exp t\ad(X)A)_{\mt} \\
& = & \left(\sum_{n \geq 0}\dfrac{t^n\ad(X)^n}{n!}A\right)_{\mt} \\
& = & A_{\mt} + \left[X,\sum_{n \geq 1}\dfrac{t^n\ad(X)^{n-1}}{n!}A\right]_{\mt}.
\end{eqnarray*}
Since $X$ is a geodesic vector with respect to $\Lambda$, $B_\Lambda(X_{\mt},[X,Y]_{\mt})_{\mt}=0$ for every $Y \in \ul$ (conf. Theorem \normalfont{\ref{hecke}}), and 
$$B_\Lambda\left(\left[X,\sum_{n \geq 1}\dfrac{t^n\ad(X)^{n-1}}{n!}A\right]_{\mt},X_{\mt}\right)=0.$$

Therefore,
$$B_\Lambda((\Ad(\exp -tX)A)_{\mt},X_{\mt}) = B_\Lambda(A_{\mt},X_{\mt}).$$
\end{proof}

Using Lemma \ref{vetgeo} one can show that if $X$ is a geodesic vector and $\gamma$ is the corresponding homogeneous geodesic, then $E^\prime(0)=0$ for any proper variation $f$. In fact, 
\begin{eqnarray*}
E^\prime(0) &=& \int_0^a \! B_\Lambda((\Ad(\exp{-tX})q'(t))_{\mt},X) \, \dn t \\
& = & \int_0^a \! B_\Lambda(q'(t)_{\mt},X) \, \dn t\\
& = & \left. B_\Lambda(q(t)_{\mt},X) \right\vert_{0}^a\\
& = & B_\Lambda(q(a)_{\mt},X) - B_\Lambda(q(0)_{\mt},X)\\
& = & B_\Lambda((\Ad{(\exp{-aX})}q(a))_{\mt},X) - B_\Lambda(q(0)_{\mt},X)\\
& = & 0,
\end{eqnarray*}
since $q(0)$ and $Ad(\exp{-aX})q(a) \in \kt$.
\\

We are now able to compute the second variation of the energy of an homogeneous geodesic $\gamma$.

\begin{proposition} \label{sve}
Let $X$ be a geodesic vector with respect to an invariant metric $\Lambda$. The {\em second variation of the energy} of the homogeneous geodesic $\gamma$ is given by 
\begin{eqnarray*}
\sve = E''(0) & = & 2\int_0^a \! B_\Lambda([q(t),q'(t)]_{\mt},X) \, \dn t\\
& & + 2\int_0^a \! B_\Lambda((\Ad{(\exp{-tX})}q'(t))_{\mt},(\Ad{(\exp{-tX})}q'(t))_{\mt}) \, \dn t
\end{eqnarray*}
\end{proposition}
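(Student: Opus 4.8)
The plan is to differentiate the formula for $E(s)$ twice at $s=0$, using the expression
$$E(s) = E(\gamma) + \int_0^a B_\Lambda\bigl((\Ad(\exp{-tX})C_s)_{\mt},(\Ad(\exp{-tX})C_s)_{\mt} + 2X\bigr)\,\dn t$$
with $C_s = T_{sq(t)}(sq'(t))$, and then to simplify the resulting ``$X$-term'' by invoking Lemma \ref{vetgeo}, exactly as was done for $E'(0)$. Set $\Psi(s,t) = (\Ad(\exp{-tX})C_s)_{\mt}$, so that $E(s) = E(\gamma) + \int_0^a B_\Lambda(\Psi,\Psi) + 2B_\Lambda(\Psi,X)\,\dn t$. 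Since $C_0 = 0$ we have $\Psi(0,t) = 0$, and the computation in the proof of Proposition \ref{privar} already gives $\partial_s\Psi|_{s=0} = (\Ad(\exp{-tX})q'(t))_{\mt}$. Differentiating twice, $\tfrac{\partial^2}{\partial s^2}B_\Lambda(\Psi,\Psi)|_{s=0} = 2B_\Lambda(\partial_s\Psi,\partial_s\Psi)|_{s=0} = 2B_\Lambda((\Ad(\exp{-tX})q'(t))_{\mt},(\Ad(\exp{-tX})q'(t))_{\mt})$, which accounts for the second integral in the statement.

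For the first integral I would compute $\tfrac{\partial^2}{\partial s^2}B_\Lambda(\Psi,X)|_{s=0} = B_\Lambda(\partial_s^2\Psi|_{s=0},X)$, so the only remaining task is to evaluate $\partial_s^2\Psi|_{s=0}$ and then apply Lemma \ref{vetgeo}. Writing $\Psi(s,t) = (\Ad(\exp{-tX})C_s)_{\mt}$ and expanding $C_s = T_{sq(t)}(sq'(t)) = sq'(t) + \tfrac{s^2}{2}[q(t),q'(t)] + O(s^3)$ from Lemma \ref{difexp}, we get $\partial_s^2 C_s|_{s=0} = [q(t),q'(t)]$; since $\Ad(\exp{-tX})$ is linear, $\partial_s^2\Psi|_{s=0} = (\Ad(\exp{-tX})[q(t),q'(t)])_{\mt}$. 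Now Lemma \ref{vetgeo}, applied with $A = [q(t),q'(t)] \in \ul$, yields $B_\Lambda((\Ad(\exp{-tX})[q(t),q'(t)])_{\mt},X) = B_\Lambda([q(t),q'(t)]_{\mt},X)$ (here $X_{\mt}=X$ since $X\in\mt$). Collecting the two contributions and the factor $2$ in front of the $X$-term gives
$$E''(0) = 2\int_0^a B_\Lambda([q(t),q'(t)]_{\mt},X)\,\dn t + 2\int_0^a B_\Lambda((\Ad(\exp{-tX})q'(t))_{\mt},(\Ad(\exp{-tX})q'(t))_{\mt})\,\dn t,$$
as claimed.

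The main obstacle I anticipate is the bookkeeping in the second-order Taylor expansion of $C_s = T_{sq(t)}(sq'(t))$: one must check that the double power series $T_{sq(t)} = \sum_{n\ge 0}\tfrac{1}{(n+1)!}\ad(sq(t))^n$ applied to $sq'(t)$ contributes, at order $s^2$, precisely the term $\tfrac12\ad(q(t))(q'(t)) = \tfrac12[q(t),q'(t)]$ (the $n=1$ term with the $\tfrac{1}{2!}$ coefficient), with all other combinations being $O(s^3)$ or vanishing at $s=0$. This is the same kind of manipulation already carried out in Proposition \ref{privar}, just pushed one order further, so it is routine but requires care. A secondary point is justifying differentiation under the integral sign, which is immediate since $f$ is a smooth variation on a compact interval and all the series involved converge locally uniformly. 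Everything else — linearity of $\Ad(\exp{-tX})$ and of the projection onto $\mt$, and the application of Lemma \ref{vetgeo} — is formal.
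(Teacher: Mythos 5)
Your proposal is correct and takes essentially the same route as the paper: differentiate the formula for $E(s)$ twice in $s$, use $\Psi(0,t)=0$ and $\partial_s\Psi|_{s=0}=(\Ad(\exp{-tX})q'(t))_{\mt}$ from the proof of Proposition \ref{privar}, read off the $\tfrac{s^2}{2}[q(t),q'(t)]$ term in the Taylor expansion of $C_s$, and then apply Lemma \ref{vetgeo} to drop the $\Ad(\exp{-tX})$ in the term paired with $X$. The bookkeeping concern you flag is precisely the manipulation the paper carries out explicitly, so there is no gap.
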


\begin{proof} 
We have
\begin{eqnarray*}E'(s) = E'(\gamma^s) & = & \int_0^a \! B_\Lambda \left( \dfrac{\partial}{\partial s}(\Ad{(\exp{-tX})}C_s)_{\mt}, (\Ad(\exp{-tX})C_s)_{\mt} + 2X \right) \, \dn t\\
& & + \int_0^a \! B_\Lambda\left((\Ad(\exp{-tX})C_s)_{\mt},\dfrac{\partial}{\partial s}(\Ad{(\exp{-tX})}C_s)_{\mt}\right) \, \dn t,
\end{eqnarray*}

and

\begin{eqnarray*}E''(0) & = & 2\int_0^a \! B_\Lambda \left( \left. \dfrac{\partial^2}{\partial s^2}(\Ad{(\exp{-tX})}C_s)_{\mt} \right\vert_{s=0},X \right) \, \dn t\\
&  & + 2 \int_0^a \! B_\Lambda((\Ad(\exp{-tX})q'(t))_{\mt},(\Ad(\exp{-tX})q'(t))_{\mt}).
\end{eqnarray*}

Now, observe that

\begin{eqnarray*}
& &\left.\dfrac{\partial^2}{\partial s^2}\Ad{(\exp{-tX})}C_s\right\vert_{s=0}\\
& & = \dfrac{\partial^2}{\partial s^2}\left.\left(\sum_{n \geq 0}\dfrac{t^n\ad(X)^n}{n!}sq'(t) + \sum_{n \geq 0}\dfrac{t^n\ad(X)^n}{n!}\dfrac{s^2[q(t),q'(t)]}{2} + O(s^3)\right)\right\vert_{s=0}\\
&& = \Ad{(\exp{-tX})[q(t),q'(t)]}.
\end{eqnarray*}

Therefore,

\begin{eqnarray*}
E''(0) & = &  2\int_0^a \! B_\Lambda((\Ad(\exp{-tX})[q(t),q'(t)])_{\mt},X) \, \dn t\\
&&
+ 2\int_0^a \! B_\Lambda((\Ad{(\exp{-tX})}q'(t))_{\mt},(\Ad{(\exp{-tX})}q'(t))_{\mt}) \, \dn t\\
& = & 2\int_0^a \! B_\Lambda([q(t),q'(t)]_{\mt},X) \, \dn t\\
&&
+ 2\int_0^a \! B_\Lambda((\Ad{(\exp{-tX})}q'(t))_{\mt},(\Ad{(\exp{-tX})}q'(t))_{\mt}) \, \dn t,
\end{eqnarray*}
since $X$ is a geodesic vector with respect to the invariant metric $\Lambda$.
\end{proof}

\section{Perturbation lemma for homogeneous geodesics.}\label{pertub}

Consider $\Lambda$ a fixed invariant metric on $\mathbb{F}_\Theta$. In this section we will provide a formula to compute the second variation of the energy with respect to a $\Pp$-perturbed metric $\Lambda^\#$ of $\Lambda$ in terms of $I^{\gamma}_\Lambda$ . An similar result was proved in \cite{caiolinosan} in the context of harmonic maps.

Let $\Lambda$ be an invariant metric on $\flag$ defined by $\Lambda = (\lambda_\sigma)_{\sigma \in \piteta}$ and $\Pp$ be a subset of $\piteta$. 
\begin{definition}
The invariant metric $\Lambda^\# = (\lambda_\sigma^\#)_{\sigma \in \piteta}$ is called a {\em $\Pp$-perturbation of $\Lambda$} if the following holds:
\begin{enumerate}
\item $\lambda_\sigma^\# = \lambda_\sigma$ for all $\sigma \in \Pp$;
\item $\lambda_\sigma^\# = \lambda_\sigma + \xi_\sigma > 0$, $\xi_\sigma \in \R$,  for any $\sigma \in \piteta\setminus\Pp$. 
\end{enumerate}
\end{definition}

Let $\gamma: I \to \flag$ be an homogeneous curve given by $\gamma(t) = \exp tX \cdot o$ with $X \in \mt$. Since $U$ acts on $\flag$ by isometries the set $\{\Aal,\Sal\}_{\alpha \in \pimenos}$ is a basis of $\mt \simeq T_o\flag$. One can see that $$\{(d \phi_{\exp{tX}})_o(\Aal),(d \phi_{\exp{tX}})_o(\Sal)\}_{\alpha \in \pimenos}$$ is an orthogonal frame along $\gamma(t)$. Then, there exists differentiable maps  $f_\alpha, g_\alpha: I \to \R$, $\alpha \in \pimenos$ such that
$$\gamma'(t) = \sum_{\alpha \in \pimenos}(f_\alpha(t)(d \phi_{\exp{tX}})_o(\Aal) + g_\alpha(t)(d \phi_{\exp{tX}})_o(\Sal)).$$

We define $\gamma_\alpha(t) := f_\alpha(t)(\dn a_{\exp{tX}})_o(\Aal) + g_\alpha(t)(\dn a_{\exp{tX}})_o(\Sal)$. This motivates the following definition:

\begin{definition}
An homogeneous curve $\gamma: I \rightarrow \flag$ given by $\gamma(t)=\exp tX \cdot o$, $X \in \mt$, is {\em subordinated } to  $\Pp$ if $\gamma_\alpha = 0$ for all $\alpha \in \sigma \in \piteta\setminus\Pp$.
\end{definition}

In other words, we say that $\gamma$ is subordinated to $\mathcal{P}$ if the image of $\psi$ is tangent to the distribution spanned by $\mathcal{P}$.
\\

Now we face the following situation: suppose that a (homogeneous) curve is a geodesic with respect to the invariant metric $\Lambda$ and is also a geodesic with respect to another metric $\Lamjv$ (obtained by a $\mathcal{P}$-perturbation of $\Lambda$). What is the relation between the second variation of the energy of $\gamma$ with respect to both metrics? The answer for this question is our next result, called {\em the perturbation lemma}.


\begin{lemma}
Let $\gamma: [0,a] \rightarrow \flag$ be an homogeneous curve subordinated to $\Pp$ and let $\Lamjv = (\lamjv)$ be a $\Pp$-perturbation of $\Lambda = (\lambda_\sigma)$. Assume that  $\gamma$ is an homogeneous geodesics with respect to $\Lambda$ and $\Lamjv$ with geodesic vector $X \in \mt$. Then 
$$\svejv = \sve + 4\sum_{\alpha \in \sigma}\sum_{\sigma \in \piteta\setminus\Pp}\xi_\sigma \int_0^a(\tilde{f}_\alpha^2(t) + \tilde{g}_\alpha^2(t))dt,$$
where $\tilde{f}_\alpha(t)$ e $\tilde{g}_\alpha(t)$ are differentiable functions given by 
$$(\Ad{(\exp{-tX})q'(t))_{\mt}} = \sum_{\alpha \in \pimenos}(\tilde{f}_\alpha(t)\Aal + \tilde{g}_\alpha(t)\Sal).$$
\end{lemma}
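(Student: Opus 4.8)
The plan is to expand both Morse indices $\svejv$ and $\sve$ using the second-variation formula of Proposition \ref{sve} and compare the two integrands termwise. Write $W(t) := (\Ad(\exp{-tX})q'(t))_{\mt} = \sum_{\alpha \in \pimenos}(\tilde f_\alpha(t)\Aal + \tilde g_\alpha(t)\Sal)$. The key observation is that $E''(0)$ has two pieces: the ``bracket term'' $2\int_0^a B_\Lambda([q(t),q'(t)]_{\mt},X)\,\dn t$ and the ``norm term'' $2\int_0^a B_\Lambda(W(t),W(t))\,\dn t$. I would argue that the bracket term is \emph{unchanged} under the $\Pp$-perturbation, while the norm term picks up exactly the claimed correction.

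**The bracket term is unaffected.** For the first integral, the point is that $[q(t),q'(t)]_{\mt}$ is paired against the fixed geodesic vector $X \in \mt$, and the bilinear form $B_\Lambda$ versus $B_{\Lamjv}$ differ only by the scalars $\xi_\sigma$ on the components $\m_\sigma$ with $\sigma \in \piteta \setminus \Pp$. So I would show $B_{\Lamjv}([q,q']_{\mt},X) - B_\Lambda([q,q']_{\mt},X) = \sum_{\sigma \in \piteta\setminus\Pp}\xi_\sigma\,(\,[q,q']_{\m_\sigma},X_{\m_\sigma})$ (up to the sign convention $B_\Lambda(\cdot,\cdot)=-(\Lambda\cdot,\cdot)$), and then invoke the subordination hypothesis: since $\gamma$ is subordinated to $\Pp$, the component $\gamma'_\alpha$ vanishes for $\alpha \in \sigma \in \piteta\setminus\Pp$; translating $\gamma'(t)$ back to the origin via $(d\phi_{\exp tX})_o$ and using that this map is a $\Pp$-graded isometry (because $\Ad(\exp tX)$ need not preserve the grading, so here I must be careful — actually the cleaner route is that $X_{\m_\sigma}=0$ for $\sigma \notin \Pp$, since $X \in \mt$ is the geodesic vector and the curve $\exp tX\cdot o$ being subordinated to $\Pp$ forces $X$ to lie in $\bigoplus_{\sigma\in\Pp}\m_\sigma$). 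Hence each term $(\,[q,q']_{\m_\sigma},X_{\m_\sigma})$ with $\sigma\notin\Pp$ vanishes, and the bracket terms agree.

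**The norm term and the correction.** For the second integral, I would write $B_{\Lamjv}(W,W) - B_\Lambda(W,W) = \sum_{\sigma\in\piteta\setminus\Pp}\xi_\sigma\,(W_{\m_\sigma},W_{\m_\sigma})_\ast$ where $(\cdot,\cdot)_\ast$ is $-(\cdot,\cdot)$ restricted to $\m_\sigma$. Using the Weyl-basis normalization $(X_\alpha,X_{-\alpha})=1$ one computes $-(\Aal,\Aal) = 2$ and $-(\Sal,\Sal) = 2$ (and $\Aal,\Sal$ orthogonal), so $(W_{\m_\sigma},W_{\m_\sigma})_\ast = \sum_{\alpha\in\sigma} 2(\tilde f_\alpha^2(t)+\tilde g_\alpha^2(t))$. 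Multiplying by $2$ from the prefactor of Proposition \ref{sve} and integrating gives the stated $4\sum_\sigma\sum_\alpha \xi_\sigma\int_0^a(\tilde f_\alpha^2+\tilde g_\alpha^2)\,\dn t$. Finally, since by hypothesis $\gamma$ is a geodesic for \emph{both} metrics, Proposition \ref{sve} applies verbatim to each, so the identity $\svejv = \sve + (\text{correction})$ follows by subtracting.

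**Main obstacle.** The delicate point I expect to have to nail down carefully is the bracket term: one must verify that the $\xi_\sigma$-contributions to $B_{\Lamjv}([q,q']_{\mt},X)$ genuinely vanish for $\sigma\notin\Pp$, which requires extracting from ``subordinated to $\Pp$'' the fact that the geodesic vector $X$ has no component in $\m_\sigma$ for $\sigma\in\piteta\setminus\Pp$ (equivalently, that at $t=0$ the frame identification is the identity, so $\gamma'(0)=X$ and $\gamma_\alpha(0)=0$ gives the component-vanishing of $X$). The rest is bookkeeping with the $\Ad(\Kt)$-invariant grading of $\mt$ and the normalization constants of the Weyl basis. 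I would also remark that the interchange ``$\sum_{\alpha\in\sigma}\sum_{\sigma\in\piteta\setminus\Pp}$'' in the statement is just summation over all pairs $(\sigma,\alpha)$ with $\sigma\in\piteta\setminus\Pp$ and $\alpha\in\sigma$.
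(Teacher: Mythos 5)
Your proposal is correct and follows essentially the same route as the paper: expand the two pieces of $E''(0)$ from Proposition \ref{sve}, observe that subordination to $\Pp$ forces the geodesic vector $X$ to have no component in $\m_\sigma$ for $\sigma \in \piteta\setminus\Pp$ (so the bracket term is unaffected by the perturbation), and track the $\xi_\sigma$ contribution through the norm term using the Cartan--Killing orthogonality of the isotropy summands and the Weyl-basis normalization. The paper's written argument just makes the same bookkeeping explicit by naming coefficients $a_\alpha, b_\alpha, \hat f_\alpha, \hat g_\alpha$ and splitting the sums over $\Pp$ and $\piteta\setminus\Pp$.
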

\begin{proof}
We have
$$[q(t),q'(t)]_{\mt} = \sum_{\alpha \in \pimenos}(\hat{f}_{\alpha}(t)\Aal + \hat{g}_\alpha(t)\Sal)$$
and
$$X = \sum_{\alpha \in \pimenos} (a_\alpha \Aal
+ b_\alpha \Sal),$$
where $\hat{f}_\alpha(t)$ and $\hat{g}_\alpha(t)$ are differentiable functions and $a_\alpha$, $b_\alpha$ $\in \R$. Then,
\begin{eqnarray*}
\svejv & = & 2\int_0^a \! B_{\Lamjv}((\Ad{(\exp{-tX})}q'(t))_{\mt},(\Ad{(\exp{-tX})}q'(t))_{\mt}) \, \dn t\\
&& + 2\int_0^a \! B_{\Lamjv}([q(t),q'(t)]_{\mt},X) \, \dn t\\
           & = & 4\sum_{\alpha \in \sigma}\sum_{\sigma \in \piteta}\lamjv \int_0^a(\tilde{f}_\alpha^2(t) + \tilde{g}_\alpha^2(t))dt + 4\sum_{\alpha \in \sigma}\sum_{\sigma \in \piteta}\lamjv \int_0^a(a_\alpha \hat{f}_\alpha(t) + b_\alpha \hat{g}_\alpha(t))dt. 
\end{eqnarray*}

If $\alpha \in \sigma \in \piteta \setminus \Pp$ then $a_\alpha = 0 = b_\alpha$, and 
\begin{eqnarray}
\svejv & = & 4\sum_{\alpha \in \sigma}\sum_{\sigma \in \piteta\setminus \Pp}\lamjv \int_0^a(\tilde{f}_\alpha^2(t) + \tilde{g}_\alpha^2(t))dt + 4\sum_{\alpha \in \sigma}\sum_{\sigma \in \Pp}\lamjv \int_0^a(\tilde{f}_\alpha^2(t) + \tilde{g}_\alpha^2(t))dt\nonumber \\
          &   & + 4\sum_{\alpha \in \sigma}\sum_{\sigma \in \Pp}\lamjv \int_0^a(a_\alpha \hat{f}_\alpha(t) + b_\alpha \hat{g}_\alpha(t))dt. \nonumber \\
          & = & 4\sum_{\alpha \in \sigma}\sum_{\sigma \in \piteta\setminus \Pp}(\lambda_\sigma + \xi_\sigma) \int_0^a(\tilde{f}_\alpha^2(t) + \tilde{g}_\alpha^2(t))dt + 4\sum_{\alpha \in \sigma}\sum_{\sigma \in \Pp}\lambda_\sigma \int_0^a(\tilde{f}_\alpha^2(t) + \tilde{g}_\alpha^2(t))dt\nonumber \\
          &   & + 4\sum_{\alpha \in \sigma}\sum_{\sigma \in \Pp}\lambda_\sigma \int_0^a(a_\alpha \hat{f}_\alpha(t) + b_\alpha \hat{g}_\alpha(t))dt. \nonumber \\
          & = & 4\sum_{\alpha \in \sigma}\sum_{\sigma \in \piteta\setminus \Pp}(\lambda_\sigma + \xi_\sigma) \int_0^a(\tilde{f}_\alpha^2(t) + \tilde{g}_\alpha^2(t))dt + 4\sum_{\alpha \in \sigma}\sum_{\sigma \in \Pp}\lambda_\sigma \int_0^a(\tilde{f}_\alpha^2(t) + \tilde{g}_\alpha^2(t))dt\nonumber \\
          &   & + 4\sum_{\alpha \in \sigma}\sum_{\sigma \in \piteta}\lambda_\sigma \int_0^a(a_\alpha \hat{f}_\alpha(t) + b_\alpha \hat{g}_\alpha(t))dt. \nonumber \\
          & = & \sve + 4\sum_{\alpha \in \sigma}\sum_{\sigma \in \piteta\setminus \Pp}\xi_\sigma \int_0^a(\tilde{f}_\alpha^2(t) + \tilde{g}_\alpha^2(t))dt. \nonumber
\end{eqnarray}
\end{proof}
\begin{remark}
The hypothesis in the perturbation lemma that $\gamma$ is geodesic with respect to both metrics $\Lambda$ and $\Lamjv$ is essential in the computations. This assumption is fulfilled by the {\em homogeneous equigeodesics}, that is, curves that are geodesics with respect to any invariant metrics.
\end{remark}

\section{Homogeneous Ricci flow and conjugate points on $\C P^{2n+1}=\Spp(n+1) / \text{U}(1)\times \Spp(n)$.} 
\subsection{Review on the geometry of  $\C P^{2n+1}$.}
Let us review some well known facts about the geometry of $\Spp(n+1)$-invariant metrics on the complex projective space $\C P^{2n+1}$. Let us denote by $\spp(n+1)$ the Lie algebra of the Lie groups $\Spp(n+1)$ and by $\kt$ the Lie algebra of the isotropy subgroup $\text{U}(1)\times \Spp(n)$ of the $\Spp(n+1)$-action on $\C P^{2n+1}$. We also consider the reductive decomposition of $\spp(n+1)$, that is, 
\begin{equation}
\spp(n+1)=\kt \oplus \mt,
\end{equation} 
where $\ad(\kt)\mt \subset \mt$, and we identify $\mt$ with the tangent space at the origin (trivial coset) of $\C P^{2n+1}$.

The isotropy representation splits into two irreducible components, namely
\begin{equation}
\mt=\m_1\oplus \m_2,
\end{equation}
with $\dim \m_1=4n$ and $\dim \m_2=2$. The decomposition $\spp(n+1) = \kt \oplus \m_1 \oplus \m_2$ satisfies the following Lie brackets relations:

\begin{equation}
[\m_1,\m_2] \subset \m_1, \qquad [\m_1,\m_1] \subset \m_2 \oplus \kt, \qquad [\m_2,\m_2] \subset \kt.
\label{rel}
\end{equation}

We define the subalgebra $\h = \kt \oplus \m_2$ and by $(\ref{rel})$ we have
\begin{equation}
[\h,\h] \subset \h, \qquad [\h,\m_1] \subset \m_1, \qquad [\m_1,\m_1] \subset \h.
\end{equation}

Therefore $(\mathfrak{g},\mathfrak{h})$ is a symmetric pair and the corresponding symmetric space is $\Spp(n+1)/(\Spp(1)\times\Spp(n))$, the Grassmannian of quaternionic 1-dimensional of $\mathbb{H}^{n+1}$.

Since $\text{U}(1)\times\Spp(n) \subset \Spp(1)\times\Spp(n) \subset \Spp(n+1)$ we have the following twistor fibration (cf. \cite{B-R})
\begin{equation}\label{twistor}
\Spp(1)/\text{U}(1)  \cdots \Spp(n+1)/\text{U}(1)\times\Spp(n)  \to \Spp(n+1)/ \Spp(1)\times\Spp(n),
\end{equation}
or equivalently $S^2\cdots  \C P^{2n+1} \to \mathbb{H}P^{n}$.

One can describe precisely each one of the irreducible components $\m_1, \m_2$ in terms of the root space decomposition of the Lie algebra $\spp(n+1)$.  We will use the following basis of $\spp(n+1)$: let $E_{ij}$ $(1 \leq i,j \leq 2n+2)$  be the $(2n+2) \times (2n+2)$ matrix with $1$ in the $ij$-position and zero otherwise. Let $m = \sqrt{2n+4}/(2n+4)$ and define


\begin{eqnarray*}
X_{ij} & = & \dfrac{\sqrt{2}m}{2}(E_{ij}-E_{n+1+j,n+1+i}), \\
X_{-ij} & = & \dfrac{\sqrt{2}m}{2}(E_{ji} - E_{n+1+i,n+1+j}), \\
X_{ij}^{+} & = & \dfrac{\sqrt{2}m}{2}(E_{i,n+1+j}+E_{j,n+1+i}), \\
X_{-ij}^{+} & = &\dfrac{\sqrt{2}m}{2}(E_{n+1+i,j}+E_{n+1+j,i}),
\end{eqnarray*}
$1 \leq i < j \leq n+1$,
\begin{eqnarray*}
X_{ii} & = & m(E_{i,n+1+i}) \\
X_{-ii} & = & m(E_{n+1+i,i}),
\end{eqnarray*}
$1 \leq i \leq n+1$.

The matrices defined above are the Weyl basis of $\spp(2n+2,\C)$ associated to the Cartan subalgebra of diagonal matrices. Let us use the compact real form of $\spp(2n+2,\C)$ . We define

$$A_{ij} = X_{ij} - X_{-ij} \text{, } S_{ij} = i(X_{ij}+X_{-ij}),$$
$1 \leq i \leq j \leq n+1$,
$$A_{ij}^{+} = X_{ij}^{+} - X_{-ij}^{+} \text{, } S_{ij}^{+} = i(X_{ij}^{+} + X_{-ij}^{+}),$$
$1 \leq i < j \leq n+1$, and
$$H_{i,i+1} = [X_{i,i+1},X_{-(i,i+1)}] \text{, } H_{n+1,n+1} = [X_{n+1,n+1},X_{-(n+1,n+1)}],$$
$1 \leq i \leq n$. 

Therefore, a basis of  $\spp(n+1)$ is given by the vectors 
$$\{A_{ii},S_{ii},A_{n+1,n+1},S_{n+1,n+1},A_{ij},S_{ij},A_{ij}^{+},S_{ij}^{+},iH_{i,i+1},iH_{n+1,n+1} \, ; \, 1 \leq i < j \leq n+1\}.$$

The decomposition $\spp(n+1) = \kt \oplus \mt$ is given by

\begin{equation}
\kt  =  \text{sp}_{\R}\{A_{ii},S_{ii},A_{n+1,n+1},S_{n+1,n+1},A_{ij},S_{ij},A_{ij}^{+},S_{ij}^{+},iH_{i,i+1},iH_{n+1,n+1},iH_{12} \, ; \, 2 \leq i<j \leq n+1\} 
\end{equation}
and
\begin{equation}\label{basem}
 \mt=\text{sp}_{\R} \{A_{11},S_{11},A_{1j},S_{1j},A_{1j}^{+},S_{1j}^{+} \, ; \, 2 \leq j \leq n+1\},
\end{equation}

and the irreducible components of the isotropy representation $\mt = \m_1 \oplus \m_2$ are
\begin{eqnarray*}
\m_1 & = & \Sp\{A_{1j},S_{1j},A_{1j}^{+},S_{1j}^{+}\, ; \, 2 \leq j \leq n+1\}, \\
\m_2 & = & \Sp\{A_{11},S_{11}\}.
\end{eqnarray*}

\begin{example} Let us consider the simplest case $n=1$. The basis of $\mathfrak{sp}(2)$ as defined above is 
\begin{center}
$A_{11} = \left( \begin{array}{cccc}
0 & 0 & \frac{\sqrt{6}}{6} & 0 \\
0 & 0 & 0 & 0 \\
-\frac{\sqrt{6}}{6} & 0 & 0 & 0 \\
0 & 0 & 0 & 0
\end{array} \right), \
S_{11} = \left( \begin{array}{cccc}
0 & 0 & \frac{\sqrt{6}i}{6} & 0 \\
0 & 0 & 0 & 0 \\
\frac{\sqrt{6} i}{6} & 0 & 0 & 0 \\
0 & 0 & 0 & 0
\end{array} \right), \
\newline
A_{22} = \left( \begin{array}{cccc}
0 & 0 & 0 & 0 \\
0 & 0 & 0 & \frac{\sqrt{6}}{6} \\
0 & 0 & 0 & 0 \\
0 & -\frac{\sqrt{6}}{6} & 0 & 0
\end{array} \right), \
S_{22} = \left( \begin{array}{cccc}
0 & 0 & 0 & 0 \\
0 & 0 & 0 & \frac{\sqrt{6}i}{6} \\
0 & 0 & 0 & 0 \\
0 & \frac{\sqrt{6} i}{6} & 0 & 0
\end{array} \right), \
\newline
A_{12} = \left( \begin{array}{cccc}
0 & \frac{\sqrt{3}}{6} & 0 & 0 \\
-\frac{\sqrt{3}}{6} & 0 & 0 & 0 \\
0 & 0 & 0 & \frac{\sqrt{3}}{6} \\
0 & 0 & -\frac{\sqrt{3}}{6} & 0
\end{array} \right), \
S_{12} = \left( \begin{array}{cccc}
0 & \frac{\sqrt{3} i}{6} & 0 & 0 \\
\frac{\sqrt{3} i}{6} & 0 & 0 & 0 \\
0 & 0 & 0 & -\frac{\sqrt{3} i}{6} \\
0 & 0 & -\frac{\sqrt{3} i}{6} & 0
\end{array} \right), \
\newline
A_{12}^+ = \left( \begin{array}{cccc}
0 & 0 & 0 & \frac{\sqrt{3}}{6} \\
0 & 0 & \frac{\sqrt{3}}{6} & 0 \\
0 & -\frac{\sqrt{3}}{6} & 0 & 0 \\
-\frac{\sqrt{3}}{6} & 0 & 0 & 0
\end{array} \right), \
S_{12}^+ = \left( \begin{array}{cccc}
0 & 0 & 0 & \frac{\sqrt{3} i}{6} \\
0 & 0 & \frac{\sqrt{3} i}{6} & 0 \\
0 & \frac{\sqrt{3} i}{6} & 0 & 0 \\
\frac{\sqrt{3} i}{6} & 0 & 0 & 0
\end{array} \right),\
\newline
iH_{12} = \left( \begin{array}{cccc}
\frac{i}{12} & 0 & 0 & 0 \\
0 & -\frac{i}{12} & 0 & 0 \\
0 & 0 & -\frac{i}{12} & 0 \\
0 & 0 & 0 & \frac{i}{12}
\end{array} \right),\
iH_{22} = \left( \begin{array}{cccc}
0 & 0 & 0 & 0 \\
0 & \frac{i}{6} & 0 & 0 \\
0 & 0 & 0 & 0 \\
0 & 0 & 0 & -\frac{i}{6}
\end{array} \right).
$
\end{center}

In this case the projective space is $\C P^3 = \Spp(2) / \normalfont{\text{U}}(1)\times\Spp(1) $, the Lie algebra of the isotropy group is 
$$\kt = \Sp \{A_{22},S_{22},iH_{12},iH_{22}\},$$
the tangent space at the origin is the vector space
$$\mt = \Sp \{A_{11},S_{11},A_{12},S_{12},A_{12}^+,S_{12}^+\},$$
and the irreducible components of the isotropy representation are
\begin{eqnarray*}
\m_1 & = & \Sp \{A_{12},S_{12},A_{12}^+,S_{12}^+\} \text{ and} \\
\m_2 & = & \Sp \{A_{11},S_{11}\}.
\end{eqnarray*}
The homogeneous fibration defined in $(\ref{twistor})$ is the Penrose's twistor fibration $\mathbb{C}P^3\to \mathbb{H}P^1 $.
\end{example}

\begin{remark}
The basis of the Lie algebra $\mathfrak{sp}(n)$ described above is motivated by the {\em Weyl's trick} for compact real forms of complex simple Lie algebras and this basis is different from the {\em quaternionic} basis for $\mathfrak{sp}(n)$. 
\end{remark}

Since we have two isotropy summands, each $\Spp(n+1)$-invariant metric on $\C P^{2n+1}$ is determined by two positive real numbers: $\Lambda=(\lambda_1,\lambda_2)$. The normal homogeneous {\em standard}  Riemannian metric is given by $\Lambda_{NR}=(1,1)$. This metric is clearly naturally reductive.

The classification of $\Spp(n+1)$-invariant Einstein metrics on $\C P^{2n+1}$ was provided by Ziller.
\begin{theorem}{\cite{ziller}}
The projective space $\C P^{2n+1}$ admits two $\Spp(n+1)$-invariant Einstein metrics (up to scale).
\end{theorem}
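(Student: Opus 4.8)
The statement to establish is Ziller's theorem: the space $\C P^{2n+1}=\Spp(n+1)/(\U(1)\times\Spp(n))$ carries exactly two $\Spp(n+1)$-invariant Einstein metrics up to scale. Since every invariant metric is of the form $\Lambda=(\lambda_1,\lambda_2)$ with $\lambda_i>0$ (as recorded in the excerpt), the plan is to compute the Ricci tensor as a function of $(\lambda_1,\lambda_2)$ and then solve the Einstein condition, which by homogeneity is a single scalar equation on the ratio $t=\lambda_1/\lambda_2$ (the overall scale being free). First I would set up the standard formula for the Ricci curvature of a reductive homogeneous space with diagonal metric: with $\mt=\m_1\oplus\m_2$ orthogonal and the bracket relations $[\m_1,\m_2]\subset\m_1$, $[\m_1,\m_1]\subset\m_2\oplus\kt$, $[\m_2,\m_2]\subset\kt$ from \eqref{rel}, the Ricci eigenvalues $r_1,r_2$ on $\m_1,\m_2$ depend only on the dimensions $d_1=4n$, $d_2=2$, the Casimir/normalization constants of the isotropy representation, and the single structure constant $[123]$ (in Wang--Ziller notation) measuring the component of $[\m_1,\m_1]$ in $\m_2$. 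I would read off $[112]$ and the constants $b_i$ from the explicit Weyl-type basis displayed above (the factors $\tfrac{\sqrt2 m}{2}$, $m$ are chosen precisely so that the Cartan--Killing normalization is transparent), so that the only genuine computation is evaluating a few brackets $[A_{1j},S_{1j}]$, $[A_{11},\cdot]$, etc., using Lemma \ref{colchetes}.

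Next, with $r_1,r_2$ in hand, the Einstein condition $r_1=r_2$ becomes a polynomial equation in $t=\lambda_1/\lambda_2$. The expected shape, for a two-summand space of this type, is
$$
r_1 = \frac{A}{\lambda_1} + B\,\frac{\lambda_2}{\lambda_1^2}, \qquad
r_2 = \frac{C}{\lambda_2} + D\,\frac{\lambda_1}{\lambda_2^2} \cdot(\text{something}),
$$
so that clearing denominators in $r_1-r_2=0$ yields a quadratic in $t$. I would then verify that this quadratic has exactly two positive roots — one of them being $t=1$, the normal metric $\Lambda_{NR}=(1,1)$, which is naturally reductive hence automatically Einstein here, and the other a second honest solution $t\neq 1$. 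Checking the discriminant is positive and that both roots lie in $(0,\infty)$ is the arithmetic heart of the argument; the dimension data $d_1=4n\gg d_2=2$ makes the inequalities go through uniformly in $n$.

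The main obstacle I anticipate is bookkeeping rather than conceptual: getting the structure constants and the isotropy-Casimir normalizations exactly right so that the quadratic's coefficients are correct. One must be careful that $\C P^{2n+1}$ here is \emph{not} the Hermitian symmetric $\SU(n+2)/\mathrm{S}(\U(1)\times\U(n+1))$ presentation — as the authors stress — so the Fubini--Study metric is \emph{not} one of the two Einstein metrics in the naive sense; rather, the symmetric fibration \eqref{twistor} over $\mathbb{H}P^n$ and the Jensen-type construction for twistor spaces explain geometrically why a second Einstein metric (shrinking the $S^2$-fibre) exists. A clean alternative to the brute-force Ricci computation is therefore: invoke Wang--Ziller's general analysis of invariant Einstein metrics on total spaces of homogeneous fibrations with two summands, apply it to \eqref{twistor}, and conclude that the Einstein equation reduces to the stated quadratic with precisely two positive solutions. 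Either route closes the proof; I would present the direct computation for self-containedness and remark on the fibration picture.
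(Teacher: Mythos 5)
The statement you are asked to prove is cited by the paper from Ziller's 1982 article; the paper does not give its own proof, so there is no internal argument to compare against. That said, your strategy — compute the diagonal Ricci tensor $r_1,r_2$ as functions of $(\lambda_1,\lambda_2)$, reduce the Einstein condition $r_1/\lambda_1=r_2/\lambda_2$ to a quadratic in the ratio $x=\lambda_1/\lambda_2$, and count positive roots — is exactly the standard route, and is consistent with what the paper uses implicitly later: the invariant lines $\gamma_1(t)=(t,\tfrac{4n+3}{8}t)$ and $\gamma_2(t)=(t,\tfrac12 t)$ of the Ricci flow in Section 4.2 are precisely the two rays of Einstein metrics, with slopes giving $x=8/(4n+3)$ (the non-K\"ahler one) and $x=2$ (the K\"ahler--Einstein, i.e.\ Fubini--Study, one).

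However, your proposal contains a concrete error at the ``arithmetic heart'' you flag. You assert that $t=1$, the normal metric $\Lambda_{NR}=(1,1)$, is one of the two roots ``which is naturally reductive hence automatically Einstein here.'' This inference is false: naturally reductive does not imply Einstein, and in this example the normal metric is indeed \emph{not} Einstein. Using the paper's own Ricci formula \eqref{77tfu} and clearing denominators in $r_1/\lambda_1 = r_2/\lambda_2$ with $\lambda_2=1$, $\lambda_1=x$, one finds
\begin{equation*}
(4n+3)x^2 - (8n+14)x + 16 = 0,
\end{equation*}
whose roots are $x=2$ and $x=8/(4n+3)$; neither equals $1$ for $n\geq 1$. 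Relatedly, your remark that ``the Fubini--Study metric is not one of the two Einstein metrics in the naive sense'' is misleading: Fubini--Study \emph{is} one of them (it is $\Spp(n+1)$-invariant and K\"ahler--Einstein, the root $x=2$); what fails is only that the $\Spp$-presentation is not a symmetric-pair presentation. With the incorrect identification of a root removed and replaced by actually solving the quadratic, the rest of your outline (the Wang--Ziller two-summand formalism, or the Jensen twistor-fibration picture over $\mathbb{H}P^n$) is sound and matches Ziller's argument.
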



The {\em cut locus} of a naturally reducible homogeneous space was also described by Ziller in \cite{ziller2} (see also \cite{priconj}).  In the case of $\C P^{2n+1}$ equipped with the normal metric one can describe their cut locus as follow (cf. \cite{priconj}, Thm 2.9): consider the projective space $\C P^{2n+1} = \text{Sp}(n+1)/(\text{U}(1)\times\text{Sp}(n))$ equipped with
$\Lambda_{NR}=(1,1)$, the normal invariant metric, and let $\gamma: \R \to
\C P^{2n+1}$ be an homogeneous geodesic given by $\exp{tX} \cdot o$, where
$X$ is an element of the basis of $\mt$ defined in (\ref{basem}). Then,
the first conjugate point of $\gamma(0)$ is $\gamma(\pi/2m)$, where
$m=\sqrt{2n+4}/(2n+4)$.

%




Now we will use the Perturbation Lemma in order to prove the first main result of this section, that asserts that one can produces conjugate points along an equigeodesic by deforming the normal metric on $\C P^{2n+1}$ via a $\mathcal{P}$-perturbation. If we choose an equigeodesic vector on the irreducible component $\mathfrak{m}_2$, then a $\mathcal{P}$-perturbation of an invariant metric $\Lambda=(\lambda_1,\lambda_2)$ is given in the $\mathfrak{m}_1$-direction (that is, perturbing the $\lambda_1$-parameter of $\Lambda$) according to the following result: 

\begin{theorem}\label{conj-p-pertub}
Consider the projective space $\C P^{2n+1} = \text{Sp}(n+1)/(\text{U}(1)\times\text{Sp}(n))$ equipped with $\Lambda_{NR}=(1,1)$, the normal invariant metric, and let $\gamma: \R \to \C P^{2n+1}$ be the equigeodesic 
\begin{equation}
\gamma(t) = \exp{tA_{11}} \cdot o,
\end{equation}
where $A_{11}\in \mathfrak{m}_2\subset\mathfrak{sp}(n+1)$. Fix $b\in \R$ such that $0 < b < \pi/2m$, where \linebreak $m=\sqrt{2n+4}/(2n+4)$. Then there exist an invariant metric $\Lamjv$ obtained by a $\mathcal{P}$-perturbation of the metric $\Lambda_{NR}$ and $c \in \R$, $0 < c \leq b$, such that $\gamma(0)$ and $\gamma(c)$ are conjugate with respect to the metric $\Lamjv$.
\end{theorem}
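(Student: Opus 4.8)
The plan is to apply the perturbation lemma with the subset $\Pp=\{\sigma_2\}$, so that a $\Pp$-perturbation of $\Lambda_{NR}=(1,1)$ is exactly a metric $\Lamjv=(1+\xi,1)$ with $\xi\in\R$, $1+\xi>0$. Two of the lemma's hypotheses hold automatically: being an equigeodesic, $\gamma$ is a geodesic for $\Lambda_{NR}$ and for every such $\Lamjv$; and $\gamma$ is subordinated to $\Pp$, because its geodesic vector $X=A_{11}$ lies in $\m_2$, so by Lemma \ref{novaabor} its velocity $\gamma'(t)=(d\phi_{\exp(tX)})_o(A_{11})$ is, at every $t$, the frame vector attached to $A_{11}\in\sigma_2\in\Pp$ (all coefficients along $\sigma_1$ vanish). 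I also record that the relations $(\ref{rel})$ make $\ad(A_{11})$ preserve $\m_1$, so $\Ad(\exp(-tX))$ restricts to an automorphism of $\m_1$ which is an isometry of the Killing form.

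The reduction is as follows. Since $b<\pi/2m$, the segment $\gamma|_{[0,b]}$ has no conjugate point of $\gamma(0)$ in $(0,b]$ for $\Lambda_{NR}$, so its index form is positive definite on proper variation vector fields; by the Morse index theorem it is then enough to produce a $\Pp$-perturbation $\Lamjv$ and a proper variation $q$ of $\gamma|_{[0,b]}$ with $I^\gamma_{\Lamjv}(q)<0$, since this makes the index of $\gamma|_{[0,b]}$ for $\Lamjv$ positive and hence forces a conjugate point $\gamma(c)$ with $0<c<b$.

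The heart of the argument is the choice of $q$. I would take $q(t)=h_1(t)A_{1j}+h_2(t)A_{1j}^{+}$ for a fixed $j$ with $2\le j\le n+1$, where $h_1,h_2\colon[0,b]\to\R$ are smooth, vanish at $0$ and $b$, and satisfy $\int_0^b h_1h_2'\,dt\neq 0$ (e.g.\ $h_1(t)=\sin(\pi t/b)$, $h_2(t)=\sin(2\pi t/b)$). Since $q(t)\in\m_1$, $\m_1\cap\kt=\{0\}$, and $\Ad(\exp(-bX))$ preserves $\m_1$, the conditions $q(0)=q(b)=0$ make the variation proper. As $[\m_1,\m_1]\subset\m_2\oplus\kt$, Lemma \ref{colchetes} gives $[A_{1j},A_{1j}^{+}]_{\mt}=m_{\alpha_{1j},\alpha_{1j}^{+}}A_{11}$ with $m_{\alpha_{1j},\alpha_{1j}^{+}}\neq 0$ (as $\alpha_{1j}+\alpha_{1j}^{+}=2\varepsilon_1$ is a root), so $[q,q']_{\mt}=(h_1h_2'-h_2h_1')\,m_{\alpha_{1j},\alpha_{1j}^{+}}A_{11}$ and $\int_0^b B_{\Lambda_{NR}}([q,q']_{\mt},X)\,dt=2\,m_{\alpha_{1j},\alpha_{1j}^{+}}\int_0^b(h_1h_2'-h_2h_1')\,dt$ is a nonzero constant, which I denote $-2N$ and may take negative ($N>0$) after interchanging $A_{1j}\leftrightarrow A_{1j}^{+}$ if needed. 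Also $q'(t)\in\m_1$, so $W(t):=(\Ad(\exp(-tX))q'(t))_{\mt}=\Ad(\exp(-tX))q'(t)\in\m_1$ and $B_{\Lambda_{NR}}(W,W)=B_{\Lambda_{NR}}(q',q')$ by invariance of the Killing form; put $M:=\int_0^b B_{\Lambda_{NR}}(W,W)\,dt>0$. By Proposition \ref{sve}, $I^\gamma_{\Lambda_{NR}}(q)=2M-2N>0$, so $0<N<M$. Finally, using $W\in\m_1=\sum_{\alpha\in\sigma_1}\ul_\alpha$ and the normalization $(X_\alpha,X_{-\alpha})=1$ (which gives $B_{\Lambda_{NR}}(A_\alpha,A_\alpha)=B_{\Lambda_{NR}}(S_\alpha,S_\alpha)=2$), the perturbation-lemma correction is $4\xi\sum_{\alpha\in\sigma_1}\int_0^b(\tilde{f}_\alpha^2+\tilde{g}_\alpha^2)\,dt=2\xi\int_0^b B_{\Lambda_{NR}}(W,W)\,dt=2\xi M$, so
\[
I^\gamma_{\Lamjv}(q)=(2M-2N)+2\xi M=-2N+2(1+\xi)M.
\]

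Since $0<N<M$, the number $N/M-1$ lies in $(-1,0)$, and for any $\xi$ in the nonempty interval $(-1,\,N/M-1)$ the metric $\Lamjv=(1+\xi,1)$ is a genuine $\Pp$-perturbation of $\Lambda_{NR}$ (indeed $\lambda_1^{\#}=1+\xi\in(0,1)$), still has $\gamma$ as a geodesic, and satisfies $I^\gamma_{\Lamjv}(q)=-2N+2(1+\xi)M<0$. By the reduction there is a $c$ with $0<c\le b$ such that $\gamma(0)$ and $\gamma(c)$ are conjugate with respect to $\Lamjv$, which is the claim. I expect the only real difficulty to be the construction of $q$: it must use a variation whose velocity stays inside $\m_1$ (so the perturbation-lemma correction is the controlled positive multiple $2\xi M$ of $\xi$) while its bracket $[q,q']_{\mt}$ has a nonzero, sign-controlled component along $X=A_{11}\in\m_2$ (so the curvature term of the second variation, which the perturbation lemma does not touch, provides the fixed negative reservoir $-2N$), and all of this must be compatible with the properness boundary conditions --- which is precisely what the root-vector pair $A_{1j},A_{1j}^{+}$ accomplishes.
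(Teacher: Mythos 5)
Your proposal is correct and takes essentially the same approach as the paper: with $\Pp=\{\sigma_2\}$ so the perturbation scales only $\lambda_1$, build a proper variation from the pair $A_{1j},A_{1j}^+\in\m_1$ (the paper uses $q_0(t)=k\sin(2\pi t/b)A_{12}+\tfrac1k t(t-b)A_{12}^+$, you use $h_1A_{1j}+h_2A_{1j}^+$ with $h_i$ vanishing at the endpoints and $\int h_1h_2'\neq0$), observe that $q'\in\m_1$ keeps the perturbation correction a controlled positive multiple of $\xi$ while $[q,q']_{\mt}$ lands on $A_{11}$ and supplies a fixed negative term, and then choose $\xi$ slightly above $-1$. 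One small bookkeeping slip: with your normalization $\int_0^b B_{\Lambda_{NR}}([q,q']_{\mt},X)\,dt=-2N$ and $\int_0^b B_{\Lambda_{NR}}(W,W)\,dt=M$, Proposition~\ref{sve} gives $I^\gamma_{\Lambda_{NR}}(q)=2M-4N$, not $2M-2N$; this only changes the admissible interval for $\xi$ to $(-1,\,2N/M-1)$, which still contains your $(-1,\,N/M-1)$, so the argument goes through unchanged.
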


\begin{proof}
Remember that $\gamma(\pi/2m)$ is the first conjugate point of $\gamma(0)$ along $\gamma$, with respect to the metric $\Lambda_{NR}$ (see \cite{priconj}).

Consider the interval $[0,b]$. Then, 
\begin{equation}
\sveNR \geq 0,
\end{equation}
for any differentiable variation $q: [0,b] \to \spp(n+1)$ such that
\begin{equation}
q(0), \, \Ad(\exp -tA_{11})q(b) \in \kt.
\end{equation}

Define $q_0: [0,b] \to \mathfrak{sp}(n+1)$ by
$$q_0(t) = k\sen\left(\dfrac{2\pi t}{b}\right)A_{12} + \dfrac{1}{k}t(t-b)A_{12}^+,$$
where $k$ is a non zero constant. The derivative of $q$ is 
$$q_0'(t) = \left(\dfrac{2\pi k}{b}\cos{\left(\dfrac{2\pi t}{b}\right)}\right)A_{12} + \dfrac{1}{k}(2t-b)A_{12}^{+}.$$
Thus
\begin{eqnarray*}
(\Ad{(\exp{-tA_{11}})}q_0'(t))_{\mt} & = & \left(\dfrac{2\pi k}{b}\cos{\left(\dfrac{2\pi t}{b}\right)}\cos{(mt)}-\dfrac{1}{k}(2t-b)\sen{(mt)}\right)A_{12}\\
&& + \left(\dfrac{2\pi k}{b}\cos{\left(\dfrac{2\pi t}{b}\right)}\sen{(mt)}+ \dfrac{1}{k}(2t-b)\cos{(mt)}\right)A_{12}^+
\end{eqnarray*}
and
$$[q_0(t),q_0'(t)]_{\mt} = m\left((2t-b)\sen{\left(\dfrac{2\pi t}{b}\right)} - \dfrac{2\pi t(t-b)}{b}\cos{\left(\dfrac{2\pi t}{b}\right)}\right)A_{11}.$$

Let $M =I_{\Lambda_{NR}}^\gamma(q_0)$ defined on the interval $[0,b]$. By Proposition \ref{sve}, 
\begin{eqnarray*}
M & = & 4m\int_{0}^{b} \left((2t-b)\sen{\left(\dfrac{2\pi t}{b}\right)} - \dfrac{2\pi t(t-b)}{b}\cos{\left(\dfrac{2\pi t}{b}\right)}\right) \, dt \\
& & + \, 4\int_{0}^{b} \left\{\left(\dfrac{2\pi k}{b}\cos{\left(\dfrac{2\pi t}{b}\right)}\right)^2 + \left(\dfrac{1}{k}(2t-b)\right)^2\right\} \, dt.
\end{eqnarray*}

Consider
$$N = \int_{0}^{b} \left\{\left(\dfrac{2\pi k}{b}\cos{\left(\dfrac{2\pi t}{b}\right)}\right)^2 + \left(\dfrac{1}{k}(2t-b)\right)^2\right\} \, dt.$$
Then,
$$ N = \dfrac{b^4 + 6\pi^2k^4}{3k^2b} > 0$$
and
$$ M - 4N = -\dfrac{8mb^2}{\pi} < 0.$$

Now, let $\xi \in \R$ such that $-1 < \xi < -M/4N$ and consider a $\mathcal{P}$-perturbation of $\Lambda_{NR}$ given by $\Lamjv = (1+\xi,1)$. By the Perturbation Lemma,
\begin{equation}
\svejvv = M + 4\xi N < M + 4\left(-\dfrac{M}{4N}\right)N = 0.
\end{equation}
Therefore, there exists $c \in \R$, $0 < c \leq b$ such that $\gamma(c)$ is conjugated to $\gamma(0)$ with respect to the metric $\Lamjv$.
\end{proof}

\begin{remark}
The computations above work in the same way for $A_{1j}$ and  $A_{1j}^+$, where $1 < j \leq n+1$, instead of $A_{12}$ and $A_{12}^+$. 
We also remark that the analysis for the equigeodesic vector $S_{11}$ is similar, considering $S_{1j}^+$ instead $A_{1j}^+$.
\end{remark}

\subsection{Homogeneous Ricci Flow}

Let us review shortly the global behavior of the homogeneous Ricci flow on $\C P^{2n+1} = \Spp(n+1) / \text{U}(1)\times\Spp(n)$. For more details, see \cite{gramamartins}.
We keep our notation and denote an invariant metric just by $\Lambda=(\lambda_1,\lambda_2)$.  The Ricci tensor of an invariant metric $\Lambda$ is again an invariant tensor, and therefore completely determined by its value at the origin of the homogeneous space and constant on each irreducible component of the isotropy representation. In the case of  $\C P^{2n+1}$, the components of the Ricci tensor are given by
\begin{equation}
\label{77tfu}
\left\{
\begin{array}{rcl}
\vspace{0.5cm}\displaystyle r_1&=&\displaystyle -\frac{4}{2n+4}-\left(\frac{2n}{4n+8}\right)\frac{\lambda_1^2}{\lambda_2^2},\\
\displaystyle r_2&=&\displaystyle -\frac{7+4n}{4n+8}+\left(\frac{6}{16n+32}\right)\frac{ \lambda_1}{ \lambda_2}.
\end{array}
\right.
\end{equation}

The Ricci flow equation on the manifold $M$ is defined by
\begin{equation}
\label{eqnricci}
\frac{\partial g(t)}{\partial t}=-2Ric(g(t)),
\end{equation}
where $Ric(g)$ is the Ricci tensor of the Riemannian metric $g$. The solution of this equation, the so called Ricci flow, is a $1$-parameter family of metrics $g(t)$ in $M$.

In our example, the Ricci flow equation for invariant metrics is given by the following ODE system: 
\begin{equation}\label{ricci-eqn}
\begin{cases} \dot{x} = \dfrac{4}{2n+4} + \left(\dfrac{2n}{4n+8}\right)\dfrac{x^2}{y^2}, \\
\\
\dot{y} = \dfrac{7 + 4n}{4n + 8} - \left(\dfrac{6}{16n+32}\right)\dfrac{x}{y}.
\end{cases}
\end{equation}

The invariant lines of the system $(\ref{ricci-eqn})$ are
\begin{equation}
\gamma_1(t) = \left(t,\left(\dfrac{4n+3}{8}\right)t\right)
\end{equation}
and
\begin{equation}
\gamma_2(t)=\left(t,\dfrac{1}{2}t\right).
\end{equation}
The global behavior of the Ricci flow on $\C P^{2n+1}$ is described using its phase portrait described in the Figure \ref{retratofase}. 
\begin{figure}[!htb]
\centering
\includegraphics[scale=1]{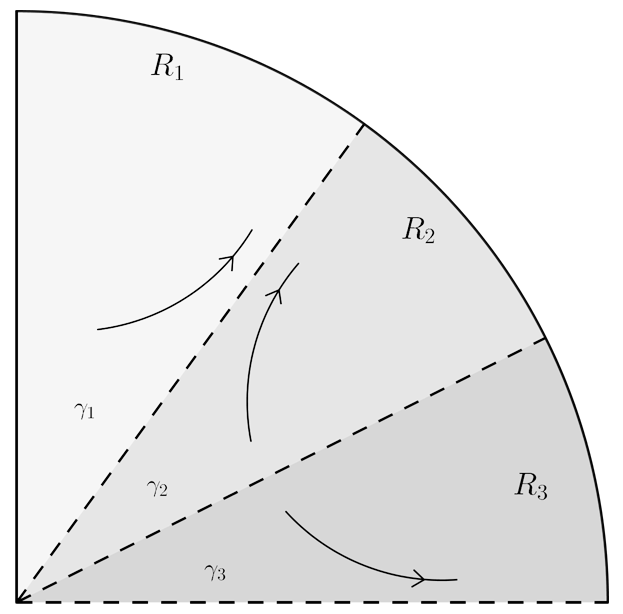}
\caption{Phase portrait of the Ricci flow on $\C P^{2n+1}$}
\label{retratofase}
\end{figure}

%
%
One can describe precisely the ``asymptotic behavior'' of the flows line of the Ricci flow. Let $g_0$ be an invariant metric and $g(t)$ the Ricci flow with initial condition $g_0$. We will denote by $g_\infty$ the limit $\lim_{t\to -\infty} g(t)$. 
\begin{theorem}\label{teoricci}{\cite{gramamartins}} 
Let $g_0$ be an invariant metric on $\C P^{2n+1}$ and $R_1$, $R_2$, $R_3$, $\gamma_1$ and $\gamma_2$ as described in Figure \ref{retratofase}. We have:
\begin{enumerate}
\item[a)] if $g_0 \in R_1\cup R_2 \cup \gamma_1$ then $g_{\infty}$ is the Einsten (non-K\"ahler) metric;
\item[b)] if $g_0 \in \gamma_2$ then $g_{\infty}$ is the Kähler-Einstein metric.
\item[c)] if $g_0 \in R_3$, we consider the twistor fibration $S^2\cdots \C P^{2n+1} \to \mathbb{H}P^n$. Then the Ricci flow $g(t)$ with $g(0) = g_0$ evolves in such a way that the diameter of fiber converges to zero when $t \to -\infty$.
\end{enumerate}
\end{theorem}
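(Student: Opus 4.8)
The plan is to read off Theorem \ref{teoricci} from a phase-plane analysis of the planar system (\ref{ricci-eqn}) on the open quadrant $Q=\{(x,y):x>0,\ y>0\}$ of $\Spp(n+1)$-invariant metrics $\Lambda=(\lambda_1,\lambda_2)$, and then to translate the dynamical conclusions into geometry. First I would use the scaling symmetry: the right-hand side of (\ref{ricci-eqn}) is homogeneous of degree zero in $(x,y)$, so the slope field $\dot y/\dot x$ depends only on the ratio $u=x/y$, every trajectory is a scale-invariant curve, and the problem reduces to a one-dimensional problem for $u$. A direct computation gives
$$\dot u=\frac{\dot x-u\dot y}{y}=\frac{4n+3}{(8n+16)\,y}\,(u-2)\Bigl(u-\frac{8}{4n+3}\Bigr),$$
so, up to the positive time reparametrization by $(8n+16)y/(4n+3)$, the ratio obeys an autonomous scalar equation whose only rest points are $u=\tfrac{8}{4n+3}$ and $u=2$. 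These are exactly the ratios $\lambda_1/\lambda_2$ realized along the invariant rays $\gamma_1$ and $\gamma_2$; along each of those rays the full system (\ref{ricci-eqn}) collapses to a scalar homothety equation, so $\gamma_1,\gamma_2$ are the two invariant Einstein rays of \cite{ziller}, with $\gamma_2$ the K\"ahler (Fubini--Study) one and $\gamma_1$ the non-K\"ahler one.

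Next I would draw the phase portrait. The quadratic $(u-2)(u-\tfrac{8}{4n+3})$ is negative on $\bigl(\tfrac{8}{4n+3},2\bigr)$ and positive off its roots; combining this with $\dot x>0$ on all of $Q$ and with the sign of $\dot y$ (which changes along the line $u=\tfrac{14+8n}{3}>2$) determines the global picture. The rays $\gamma_1,\gamma_2$ cut $Q$ into three sectors, which I would match with $R_1=\{0<u<\tfrac{8}{4n+3}\}$, $R_2=\{\tfrac{8}{4n+3}<u<2\}$, $R_3=\{u>2\}$ of Figure \ref{retratofase}: in the relevant asymptotic direction every trajectory starting in $R_1\cup R_2$ is funneled toward the ray $\gamma_1$, the ray $\gamma_2$ is invariant, and on $R_3$ the ratio $u$ diverges while $\lambda_2\to 0$. (One must keep track of the fact that the parameter $t$ in ``$g_\infty=\lim_{t\to-\infty}g(t)$'' is the genuine Ricci-flow time, and that with the sign conventions of (\ref{77tfu})--(\ref{ricci-eqn}) this limit corresponds to running (\ref{ricci-eqn}) in its ``growing'' direction; settling this once and for all is part of the proof.)

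With the portrait in hand the three assertions follow. For (a): if $g_0\in R_1\cup R_2\cup\gamma_1$ then $\lambda_1(t)/\lambda_2(t)\to\tfrac{8}{4n+3}$, so after a suitable rescaling (normalizing, say, $\lambda_1$ or the total volume) $g(t)$ converges to the invariant metric carried by $\gamma_1$, that is, the Einstein non-K\"ahler metric; assertion (b) is immediate since $\gamma_2$ is an invariant ray. For (c): if $g_0\in R_3$ then $\lambda_2(t)\to0$, and in the twistor fibration $S^2\cdots\C P^{2n+1}\to\mathbb{H}P^n$ the summand $\m_2$ is the tangent space of the $S^2$-fiber, the induced metric on a fiber being $\lambda_2$ times a fixed round metric on $S^2$; hence the fiber has diameter comparable to $\sqrt{\lambda_2}$, which tends to $0$.

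The main obstacle is not the algebra---the rest points, the factorization of $\dot u$ and the sign analysis are routine---but the precise meaning of ``$\lim_{t\to-\infty}g(t)$''. In the relevant time direction the unnormalized metric always degenerates (it blows up on $R_1\cup R_2\cup\gamma_1\cup\gamma_2$ and collapses, $\lambda_2\to0$, on $R_3$), so one must select the right normalization and then prove genuine convergence of the normalized flow to the limiting Einstein metric, and in the $R_3$ case estimate the collapse of the fiber; this, together with the careful identification of the three dynamical sectors with $R_1,R_2,R_3$, is where the real work lies. (A complete argument is given in \cite{gramamartins}.)
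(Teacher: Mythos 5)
The paper itself does not prove Theorem \ref{teoricci}; it quotes it from \cite{gramamartins}, so there is no in-paper proof to compare against. That said, your outline is the natural one and the key algebraic step checks out. Setting $u=x/y$, the right-hand side of (\ref{ricci-eqn}) is homogeneous of degree zero, and
\begin{equation*}
\dot u=\frac{\dot x-u\dot y}{y}
=\frac{(4n+3)u^2-(8n+14)u+16}{8(n+2)\,y}
=\frac{4n+3}{8(n+2)\,y}\,(u-2)\Bigl(u-\frac{8}{4n+3}\Bigr),
\end{equation*}
and the two roots are exactly the slopes of the paper's invariant lines $\gamma_1(t)=\bigl(t,\frac{4n+3}{8}t\bigr)$ (where $u=8/(4n+3)$) and $\gamma_2(t)=\bigl(t,\frac12 t\bigr)$ (where $u=2$). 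Your identification of $\m_2$ with the tangent to the $S^2$-fibre of the twistor fibration (\ref{twistor}) is also correct, so translating ``$\lambda_2\to0$'' into ``fibre diameter $\to0$'' is legitimate.

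The gap you acknowledge is the real one. Since $\dot x>0$ everywhere in (\ref{ricci-eqn}), the unnormalized flow leaves every compact set, so ``$g_\infty=\lim_{t\to-\infty}g(t)$'' can only mean a limit of the projectivized or volume-normalized flow. A complete proof must (i) fix the sign/time convention relating (\ref{eqnricci}) to (\ref{ricci-eqn}) --- as written, (\ref{ricci-eqn}) \emph{expands} every invariant metric, the opposite of what the raw Ricci flow does on this positively Ricci-curved space, and resolving that discrepancy is what makes ``$t\to-\infty$'' the attracting direction toward $u=8/(4n+3)$ on $R_1\cup R_2$ --- and (ii) upgrade the convergence of the ratio $u(t)$ to a genuine convergence statement for a rescaled metric, and quantify the collapse $\lambda_2\to0$ on $R_3$. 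You name these steps but defer them; carrying them out, e.g.\ via the autonomous scalar equation for $u$ together with the positive time reparametrization by $8(n+2)y/(4n+3)$, is what turns your (correct) phase-plane sketch into a proof.
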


From now on we consider the normal metric $g_0 = \Lambda_{NR}=(1,1) \in R_2$ on $\C P^{2n+1}$ and $\gamma$ an homogeneous equigeodesic such that $\gamma(0)$ is the origin of the homogeneous space. According to the previous section, the first conjugate point to $\gamma(0)$ is $\gamma(\frac{\pi}{2m})$. Now we will deal with the following question: can the Ricci flow $g(t)$ with initial condition $g_0$ evolve with the property that given $0<b<\frac{\pi}{2m}$, there exist $t_0 \in (-\infty,0)$ and $c \in \R$, $0 < c \leq b$, such  that $\gamma(c)$ and $\gamma(0)$ are conjugate with respect to the invariant metric $g(t_0)$? In other words, can we produce conjugate points deforming a metric via the Ricci flow ?

We will give a positive answer to this question for $\C P^{2n+1}$, with $n\geq 10$. 

Let $0<b<\frac{\pi}{2m}$ and consider a $\mathcal{P}$-perturbation of the initial metric $g_0 = (1,1)$ given by $\Lamjv = (\lambjv_1,1)$ with $$\lambjv_1 \in (0,1-M/4N),$$where  the $M,N$ are computed in the proof of Theorem $\ref{conj-p-pertub}$ (and its depends of $b$). 

Our first remark is that if $n \geq 10$, the invariant metric $\Lamjv$ can be taken in the region $R_2$ of the phase portrait of the Ricci flow system. Since the invariant line $\gamma_1$ of the phase portrait is generated by the invariant Einstein (non-K\"ahler) metric it is sufficient to show that $1-M/4N>8/(4n+3)$.



\begin{lemma}\label{lemak} Consider  $\C P^{2n+1} = \text{Sp}(n+1)/(\text{U}(1)\times\text{Sp}(n))$, with $n \geq 10$, and let $\gamma(t) = \exp tX \cdot o$ be an equigeodesic with $X = A_{11} \in \m_1$. If $f(s,t)$ is a geodesic variation of $\gamma$ given by 
$$f(s,t) = \exp sq(t) \cdot \gamma(t),$$
with $q: [0,b] \to \mathfrak{sp}(n+1)$ defined by 
$$q(t) = k\sen{\left(\dfrac{2\pi t}{b}\right)}A_{12} + \dfrac{1}{k}t(t-b)A_{12}^+$$
and $b \in \left(\dfrac{8\sqrt{6}\pi^2}{3m(4n+3)},\dfrac{\pi}{2m}\right)$, $m=\sqrt{2n+4}/(2n+4)$, then there exists $k$ such that 
$$1 - \dfrac{M}{4N} > \dfrac{8}{4n + 3}.$$
\end{lemma}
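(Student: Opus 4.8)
The plan is to reduce the claimed inequality to an elementary one–variable maximization, reusing the computations already carried out in the proof of Theorem \ref{conj-p-pertub}. Note first that the variation $q$ in the statement is exactly the variation $q_0$ used there, so we have the closed forms
\[
N = \frac{b^4 + 6\pi^2 k^4}{3k^2 b}, \qquad M - 4N = -\frac{8mb^2}{\pi},
\]
where $M$ and $N$ are the quantities computed in that proof. Since $M = 4N - 8mb^2/\pi$, the quantity of interest becomes, purely in terms of $b$ and $k$,
\[
1 - \frac{M}{4N} = \frac{2mb^2}{\pi N} = \frac{6\,m\,b^3\,k^2}{\pi\,(b^4 + 6\pi^2 k^4)}.
\]
Thus the lemma is equivalent to the existence of $k>0$ making the right-hand side exceed $8/(4n+3)$.

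The next step is to maximize the right-hand side over $k$. Writing $u = k^2 > 0$, the function $h(u) = u/(b^4 + 6\pi^2 u^2)$ has derivative $h'(u) = (b^4 - 6\pi^2 u^2)/(b^4 + 6\pi^2 u^2)^2$, positive for $u < u_\star$ and negative for $u > u_\star$ with $u_\star = b^2/(\pi\sqrt6)$; together with $h(u)\to 0$ as $u\to 0^+$ and as $u\to\infty$, this makes $u_\star$ the global maximum of $h$ on $(0,\infty)$. At $u_\star$ one has $b^4 + 6\pi^2 u_\star^2 = 2b^4$, hence $h(u_\star) = 1/(2\pi\sqrt6\,b^2)$. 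Choosing $k$ with $k^2 = b^2/(\pi\sqrt6)$ therefore gives
\[
1 - \frac{M}{4N} = \frac{6mb^3}{\pi}\cdot\frac{1}{2\pi\sqrt6\,b^2} = \frac{\sqrt6\,m\,b}{2\pi^2}.
\]

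Finally I would verify that this optimal value exceeds $8/(4n+3)$ precisely when $b$ lies in the prescribed interval: the inequality $\sqrt6\,mb/(2\pi^2) > 8/(4n+3)$ rearranges to $b > 16\pi^2/(\sqrt6\,m(4n+3))$, and since $16/\sqrt6 = 8\sqrt6/3$ this is exactly $b > 8\sqrt6\,\pi^2/(3m(4n+3))$, the left endpoint of the interval assumed for $b$. The hypothesis $n\ge 10$ enters only to ensure that this interval is non-empty, i.e.\ that $8\sqrt6\,\pi^2/(3m(4n+3)) < \pi/(2m)$; this simplifies to $4n+3 > 16\sqrt6\,\pi/3$, which holds exactly for $n\ge 10$. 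There is no real obstacle in this argument; the points that need a little care are the bookkeeping identity $16/\sqrt6 = 8\sqrt6/3$, the verification that $u_\star$ is a global (not merely local) maximum of $h$, and the observation that the bound imposed on $b$ has been reverse-engineered from the optimal choice $k^2 = b^2/(\pi\sqrt6)$.
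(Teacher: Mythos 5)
Your proposal is correct and follows essentially the same approach as the paper: reuse the closed-form expressions for $M$ and $N$ from Theorem \ref{conj-p-pertub}, write $1 - M/4N = 6mb^3k^2/\bigl(\pi(b^4 + 6\pi^2 k^4)\bigr)$, maximize over $k$ (the paper also picks $k=\pm b/\sqrt[4]{6}\sqrt{\pi}$, i.e.\ $k^2 = b^2/(\pi\sqrt{6})$, with maximum value $\sqrt{6}mb/2\pi^2$), and check that this exceeds $8/(4n+3)$ precisely when $b$ lies in the stated interval. Your extra details (the explicit calculus argument showing $u_\star$ is a global maximum, the algebraic identity $16/\sqrt6 = 8\sqrt6/3$, and the verification that $n\ge 10$ makes the interval nonempty) merely spell out steps the paper leaves implicit or relegates to a remark.
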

\begin{proof}
Using the computations of the Theorem $\ref{conj-p-pertub}$ we have
$$\dfrac{4N-M}{4N} = \dfrac{2b^2m}{\pi}\dfrac{3k^2b}{b^4+6\pi^2k^4} = \dfrac{6k^2b^3m}{\pi(b^4+6\pi^2k^4)}.$$

Therefore $(4N-M)/4N$ is a function in the variable $k$ and its maximum value is $bm\sqrt{6}/2\pi^2$ at $k = \pm b/\sqrt[4]{6}\sqrt{\pi}$. Since $b>\dfrac{8\sqrt{6}\pi^2}{3m(4n+3)}$ we have $$\dfrac{bm\sqrt{6}}{2\pi^2} > \dfrac{8}{4n+3}.$$

In order to conclude the proof, we consider $k = b/\sqrt[4]{6}\sqrt{\pi}$ or $k = -b/\sqrt[4]{6}\sqrt{\pi}$.  

\end{proof}

\begin{remark} In the Lemma \ref{lemak} we need to assume  $n \geq 10$ in order to guarantee that $8\sqrt{6}\pi^2/3m(4n+3) < \pi/2m$.  Furthermore, on the choice of $q(t)$, given  $j$ such that  $1<j\leq n+1$, one can use  $A_{1j}$ and $A_{1j}^+$ instead $A_{12}$ and $A_{12}^+$, respectively.
\end{remark}

\begin{remark} If we consider the equigeodesic vector $X=S_{11} \in \m_1$ a similar analysis hold; we just consider $S_{1j}^+$ instead $A_{1j}^+$, $1 < j \leq n+1$.
\end{remark}
Now we are able to prove the main theorem of this section.
\begin{theorem}
Consider  $\C P^{2n+1} = \text{Sp}(n+1)/(\text{U}(1)\times\text{Sp}(n))$, with $n \geq 10$,  and $$b \in \left(\dfrac{8\sqrt{6}\pi^2}{3m(4n+3)},\dfrac{\pi}{2m}\right),$$
where $m=\sqrt{2n+4}/(2n+4)$. Let $\gamma(t)=\exp tX \cdot o$ be an equigeodesic with \linebreak $X = A_{11} \in \m_1$.   Then the homogeneous Ricci flow $g(t)$ with $g(0)=\Lambda_{NR} =(1,1)$ evolves in such way that there exist $t_0\in (-\infty,0)$ and $c \in \R$, $0 < c \leq b$, such that $\gamma(0)$ and $\gamma(b)$ are conjugate with respect to the invariant metric $g(t_0)$.

\end{theorem}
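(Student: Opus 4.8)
The plan is to exploit the fact that the analytic work has already been done: Theorem~\ref{conj-p-pertub} and Lemma~\ref{lemak} describe exactly which $\mathcal{P}$-perturbations of $\Lambda_{NR}$ force a conjugate point before $\gamma(b)$, and it remains only to recognize one such perturbation --- up to an overall positive scaling, which is irrelevant for conjugacy --- among the metrics visited by the Ricci flow running backward from the normal metric. First I would fix the data: since $A_{11}$ lies in a single irreducible isotropy summand it is a trivial equigeodesic vector, so $\gamma$ is a geodesic for every $\Spp(n+1)$-invariant metric on $\C P^{2n+1}$, in particular for every metric $g(t)$ on the Ricci flow line. Fixing $b$ in the prescribed interval $\left(\frac{8\sqrt{6}\pi^2}{3m(4n+3)},\frac{\pi}{2m}\right)$ --- nonempty precisely because $n\geq 10$ --- I apply Lemma~\ref{lemak} to fix the constant $k$, hence the variation field $q_0(t)=k\sen(2\pi t/b)A_{12}+\frac1k t(t-b)A_{12}^{+}$, so that the quantities $M=I_{\Lambda_{NR}}^\gamma(q_0)$ and $N$ appearing in the proof of Theorem~\ref{conj-p-pertub} satisfy $1-M/4N>8/(4n+3)$. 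From that proof one also has $N>0$ and $M-4N<0$; and $M>0$, since the first conjugate point of $\gamma$ for $\Lambda_{NR}$ is $\gamma(\pi/2m)$ with $\pi/2m>b$, so the index form on $[0,b]$ is positive definite on proper variations and $I_{\Lambda_{NR}}^\gamma(q_0)>0$. Hence $0<8/(4n+3)<1-M/4N<1$.

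Next I would bring in the flow. Write the homogeneous Ricci flow as $g(t)=(\lambda_1(t),\lambda_2(t))$ with $g(0)=\Lambda_{NR}=(1,1)\in R_2$, and set $\rho(t):=\lambda_1(t)/\lambda_2(t)$, a scale-independent continuous function with $\rho(0)=1$. By Theorem~\ref{teoricci}(a) and the phase portrait (Figure~\ref{retratofase}), the flow line through $\Lambda_{NR}$, traced backward, limits onto the invariant ray $\gamma_1$, along which $\lambda_1/\lambda_2\equiv 8/(4n+3)$; hence $\rho(t)\to 8/(4n+3)<1$ as $t\to-\infty$. Therefore, for every $v\in\bigl(8/(4n+3),1\bigr)$ the intermediate value theorem provides some $t_0\in(-\infty,0)$ with $\rho(t_0)=v$; I choose $v$ in the subinterval $\bigl(8/(4n+3),\,1-M/4N\bigr)$. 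Now, multiplying a metric by a positive constant changes neither its geodesics nor its Jacobi fields, so conjugacy along $\gamma$ with respect to $g(t_0)$ coincides with conjugacy with respect to $\Lamjv:=\frac1{\lambda_2(t_0)}\,g(t_0)=(\rho(t_0),1)$. But $\Lamjv$ is exactly a $\mathcal{P}$-perturbation $(1+\xi,1)$ of $\Lambda_{NR}$ with $\xi=\rho(t_0)-1\in(-1,-M/4N)$, so Theorem~\ref{conj-p-pertub}, applied with this choice of $k$ (hence of $M$ and $N$), yields $c\in\R$ with $0<c\leq b$ such that $\gamma(c)$ is conjugate to $\gamma(0)$ with respect to $\Lamjv$, and therefore with respect to $g(t_0)$. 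This would finish the proof.

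The one input that is not purely formal is the claim $\rho(t)\to 8/(4n+3)$ as $t\to-\infty$ --- that the past-limiting direction of the flow line through $\Lambda_{NR}$ is the Einstein ray $\gamma_1$, rather than $\gamma_2$ or a coordinate axis --- which is precisely the content of Theorem~\ref{teoricci}(a). Beyond quoting that result and the Perturbation Lemma, the remaining work is bookkeeping: one must check that the interval $\bigl(8/(4n+3),\,1-M/4N\bigr)$ through which $\rho(t)$ must pass is nonempty, which is exactly what Lemma~\ref{lemak} secures, and this is the reason for the hypotheses $n\geq 10$ and $b>\frac{8\sqrt{6}\pi^2}{3m(4n+3)}$.
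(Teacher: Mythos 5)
Your proposal is correct and takes essentially the same approach as the paper. The paper applies the intermediate value theorem by constructing the ray of homotheties $\ell(r)=r(\zeta,1)$ through a chosen perturbed metric and arguing the backward flow line from $\Lambda_{NR}$ must cross that ray, then uses the scaling relation $I^{\gamma}_{g(t_0)}(q)=r_0\,I^{\gamma}_{(\zeta,1)}(q)$; you instead track the scale-invariant ratio $\rho(t)=\lambda_1(t)/\lambda_2(t)$ directly and apply the IVT to it, invoking scaling invariance of conjugacy at the end --- these are trivially equivalent formulations of the same step. Your explicit observation that $M>0$ (since $q_0$ is a proper variation on $[0,b]$ with $b<\pi/2m$ and there are no conjugate points on that interval, so the index form is positive definite) is a small but genuine improvement: the paper needs $1-M/4N<1$ so that the chosen $\zeta$ is strictly between the ratios $8/(4n+3)$ and $1$ and is thus actually attained by the backward flow, and it leaves this implicit.
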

\begin{proof}
Initially, we take $\zeta$ in the interval  $(8/(4n+3),1-M/4N)$. By Theorem \ref{conj-p-pertub} and Lemma \ref{lemak} the invariant metric determined by $(\zeta,1)$ is in the region $R_2$ for the phase portrait of the Ricci flow and $\gamma(b)$ is conjugate to $\gamma(0)$ with respect to this metric.

Now consider homoteties of the metric  $(\zeta,1)$ given by the line
\begin{equation}
\ell (r)=r\cdot (\zeta,1)=(r\zeta,r), \,\,\, r>0.
\end{equation}
The entire line $\ell(r)$ is contained in the region $R_2$. Since $\disp\lim_{t \to \infty}{g_t}$ converges asymptotically to the line $\gamma_1$, the Intermediate Value Theorem guarantees the existence of $r_0$ and $t_0$ such that $$g(t_0)=\ell(r_0)=(r_0\,\zeta,r_0). $$
For the metric $g(t_0)$ we have $I_{{g(t_0) }}^{\gamma}(q) = r_0 I_{(\zeta,1)}^{\gamma}(q) $ with 
$$q(t) = \dfrac{b}{\sqrt[4]{6}\sqrt{\pi}}\sen{\left(\dfrac{2\pi t}{b}\right)}A_{12} + \dfrac{\sqrt[4]{6}\sqrt{\pi}}{b}t(t-b)A_{12}^+.$$
Since $I_{(\zeta,1)}^{\gamma}(q) <0$ and $r_0>0$ we have $ I_{{g(t_0) }}^{\gamma}(q)<0$. This means that there exists $0 < c \leq b$ such that $\gamma(c)$ and $\gamma(0)$ are conjugate with respect to the metric $g(t_0)$.
\end{proof}



\begin{example}
The following picture illustrate the flow line of the Ricci flow with initial metric $g(0)=(1,1)$, the perturbed metric $(\xi,1)$ and the line of homoteties $\ell(r)$ on $\C P^{21}$ (that is, $n=10$).
\begin{figure}[!htb]
\centering
\includegraphics[scale=1]{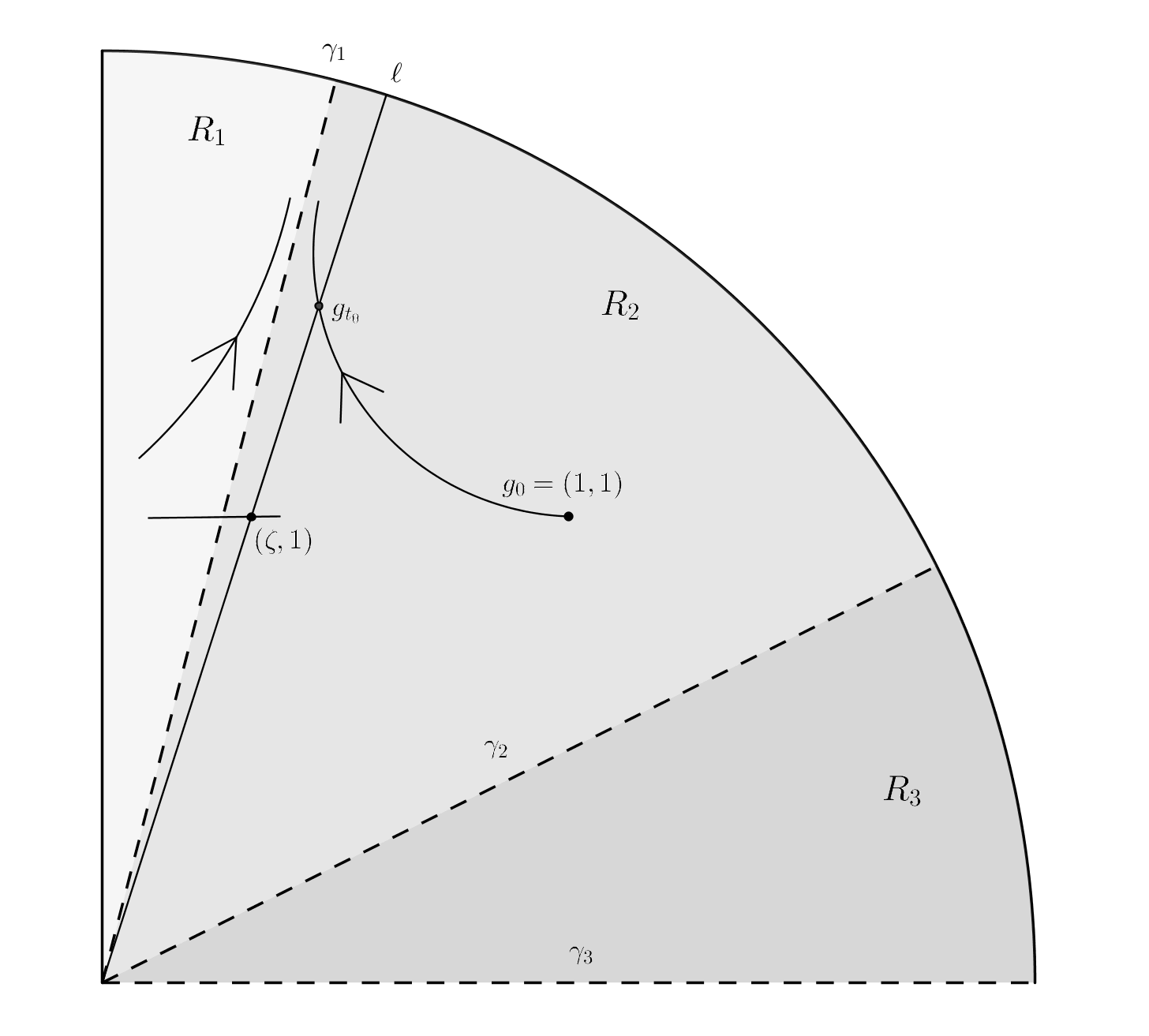}
\label{exemplocp15}
\end{figure}

\end{example}

\section{Algebraic Criteria}

In this section we provide a general procedure to deform the normal metric (parametrized by $\Lambda=(1,1)$) in a generalized flag manifold in order to produce a conjugate point along a homogeneous geodesics. This is done by producing a specific  $\mathcal{P}$-perturbation, and one can characterize this perturbation using  Lie theoretical description of generalized flag manifolds.   

\begin{definition} \label{pair-pertub}
Let $\alpha \in \sigma_o \in \piteta$. A pair of roots $\{\beta,\gamma\}$ is called a {\em perturbation pair with respect to }$\alpha$ if its satisfies the following conditions:
\begin{enumerate}
\item $\beta$, $\delta \in \pimenos$,
\item $\beta$, $\delta \notin \sigma_0$,
\item $\alpha = \beta + \delta$,
\item $\alpha + \beta$ and $\alpha + \delta \notin \Pi$,
\item $\beta - \delta \notin \pimenos$.
\end{enumerate}
\end{definition}

The next Lemma is very well known and we will use it in the proof of the main result of this section.
\begin{lemma}\label{somadosm}
\normalfont{\textbf{(\cite{hel}, p. 171)}} Suppose $\alpha$, $\beta$, $\gamma \in \Pi$ and $\alpha + \beta + \gamma = 0$. Then,
$$m_{\alpha,\beta} = m_{\beta,\delta} = m_{\delta,\alpha}.$$
\end{lemma}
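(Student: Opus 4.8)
The plan is to obtain the relation directly from the Jacobi identity applied to the Weyl basis vectors $X_\alpha$, $X_\beta$ and $X_\delta$ attached to the three roots summing to zero. The key observation is that each pairwise sum is the negative of the remaining root, $\alpha+\beta=-\delta$, $\beta+\delta=-\alpha$, $\delta+\alpha=-\beta$, so each inner bracket $[X_\mu,X_\nu]$ occurring in the Jacobi identity is a scalar multiple of $X_{-\rho}$ for the third root $\rho$, and bracketing once more with $X_\rho$ lands in the Cartan subalgebra $\mathfrak{h}$.

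First I would record two elementary facts. From the normalization $(X_\rho,X_{-\rho})=1$ one gets, for every $H\in\mathfrak{h}$, $(H,[X_\rho,X_{-\rho}])=([H,X_\rho],X_{-\rho})=\rho(H)(X_\rho,X_{-\rho})=\rho(H)=(H,H_\rho)$, hence $[X_\rho,X_{-\rho}]=H_\rho$. Also, since $\alpha+\beta+\delta=0$ with all three being roots, $\alpha$ and $\beta$ are linearly independent (in a reduced root system proportional roots differ only by a sign, which would force one of $\alpha,\beta,\delta$ to be zero), so $H_\alpha$ and $H_\beta$ are linearly independent in $\mathfrak{h}_{\R}$; moreover the map $\mu\mapsto H_\mu$ is $\R$-linear, so $H_\alpha+H_\beta+H_\delta=0$.

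Then comes the main computation. Expanding the Jacobi identity $0=[[X_\alpha,X_\beta],X_\delta]+[[X_\beta,X_\delta],X_\alpha]+[[X_\delta,X_\alpha],X_\beta]$, and using $[X_\alpha,X_\beta]=m_{\alpha,\beta}X_{-\delta}$ together with $[X_{-\delta},X_\delta]=-[X_\delta,X_{-\delta}]=-H_\delta$ (and the two cyclic analogues), one obtains $m_{\alpha,\beta}H_\delta+m_{\beta,\delta}H_\alpha+m_{\delta,\alpha}H_\beta=0$. Substituting $H_\delta=-H_\alpha-H_\beta$ and collecting the coefficients of the linearly independent vectors $H_\alpha$ and $H_\beta$ yields $m_{\beta,\delta}=m_{\alpha,\beta}$ and $m_{\delta,\alpha}=m_{\alpha,\beta}$, which is exactly the asserted equality $m_{\alpha,\beta}=m_{\beta,\delta}=m_{\delta,\alpha}$.

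I do not anticipate a serious obstacle here; the only points requiring a little care are the sign bookkeeping (in particular the minus sign coming from $[X_{-\delta},X_\delta]=-H_\delta$) and the justification that $H_\alpha$ and $H_\beta$ are linearly independent, which is what legitimizes the final comparison of coefficients. Both are routine in a reduced root system.
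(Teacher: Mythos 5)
Your proof is correct, and it is the classical Jacobi-identity argument that Helgason himself uses on the cited page (the paper offers no proof of its own, deferring entirely to that reference). The two points you flag as needing care — the sign from $[X_{-\delta},X_\delta]=-H_\delta$ and the linear independence of $H_\alpha$, $H_\beta$ in a reduced root system — are both handled correctly.
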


Now we are able to state the main result of this section. This is a generalization of the procedure to produce conjugate points in $\mathbb{CP}^{2n+1}$.

\begin{theorem}\label{ca}
Let $\flag = G/\Pt = U/\Kt$ be a generalized flag manifold equipped with the standard metric $\Lambda$. Let $\alpha \in \sigma_o \in \piteta$ and  consider the equigeodesic $\gamma: \R \to \flag$ given by $$\gamma(t) = \exp{tA_{\alpha}}\cdot o.$$ Suppose that there exists $a \in \R$ such that $\gamma(a)$ is the first conjugate point of $\gamma(0)$ along $\gamma$ with respect to the metric $\Lambda$. If $\{\beta,\gamma\}$ is a perturbation pair with respect to $\alpha$, then there exist a differentiable map $$q_0: [0,b] \to \ul$$ and an invariant metric $\Lamjv$ obtained by a $\Pp$-perturbation of $\Lambda$ such that $$\svejvv < 0,$$ where $0<b<a$.
\end{theorem}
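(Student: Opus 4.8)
The plan is to mimic, in the general Lie-theoretic setting, the explicit computation carried out for $\C P^{2n+1}$ in Theorem \ref{conj-p-pertub}. The perturbation pair $\{\beta,\delta\}$ is precisely the abstract gadget that makes that computation go through: conditions (1)--(2) guarantee that $\beta$ and $\delta$ index honest irreducible components $\sigma_\beta,\sigma_\delta\in\piteta$ distinct from $\sigma_o$, so we may set $\Pp = \piteta\setminus\{\sigma_\beta\}$ (or whichever of the two components we wish to perturb) and apply the Perturbation Lemma; condition (3), $\alpha=\beta+\delta$, together with condition (4), $\alpha+\beta,\alpha+\delta\notin\Pi$, will force the bracket $[q_0(t),q_0'(t)]_{\mt}$ to land only in $\ul_\alpha$ (the direction of the geodesic vector $A_\alpha$), which is what lets us compute the first term of $I^\gamma_\Lambda(q_0)$; condition (5), $\beta-\delta\notin\pimenos$, kills the unwanted cross terms $A_{\beta-\delta}$ in the brackets $[A_\beta,A_{-\alpha}]$, etc., coming from Lemma \ref{colchetes}.

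First I would define, in analogy with the $\C P^{2n+1}$ case, the variation vector field
$$q_0(t) = k\sen\!\left(\dfrac{2\pi t}{b}\right)A_\beta + \dfrac{1}{k}\,t(t-b)A_\delta,$$
with $k\neq 0$ a constant to be chosen and $b<a$. One checks immediately that $q_0(0)=0\in\kt$ and $q_0(b)=0$, so the variation is proper regardless of the value of $\Ad(\exp -bA_\alpha)$. Next I would compute $(\Ad(\exp -tA_\alpha)q_0'(t))_{\mt}$ using the bracket relations of Lemma \ref{colchetes}: since $[A_\alpha,A_\beta] = m_{\alpha,\beta}A_{\alpha+\beta} + m_{-\alpha,\beta}A_{\alpha-\beta}$ and condition (4) makes $m_{\alpha,\beta}=0$ (as $\alpha+\beta$ is not a root) while $\alpha - \beta = \delta$ is a root, the adjoint action rotates $A_\beta$ into the plane $\Sp\{A_\beta, A_\delta\}$ — exactly as $A_{12},A_{12}^+$ behaved under $\Ad(\exp -tA_{11})$ in the projective space. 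The coefficients $\tilde f_\beta,\tilde g_\beta,\tilde f_\delta,\tilde g_\delta$ will be the same trigonometric/polynomial expressions (up to the constants $m_{\alpha,\beta}=m_{-\alpha,\delta}$, which by Lemma \ref{somadosm} applied to $\alpha + (-\beta) + (-\delta)=0$ all coincide, and for the standard metric are normalized anyway). Similarly $[q_0(t),q_0'(t)]_{\mt}$ is proportional to $A_\alpha$ with the same scalar function of $t$ as before, the proportionality constant again being $m_{\beta,\delta}=m_{\alpha,\beta}$ by Lemma \ref{somadosm}; condition (5) is what guarantees no $A_{\beta-\delta}$ term survives.

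With these computations in hand, Proposition \ref{sve} gives $M \ce I^\gamma_\Lambda(q_0)$ as the same explicit integral $4m\int_0^b(\cdots)\,dt + 4N$ that appeared in Theorem \ref{conj-p-pertub}, where $N = \int_0^b\{(\tfrac{2\pi k}{b}\cos(\tfrac{2\pi t}{b}))^2 + (\tfrac1k(2t-b))^2\}\,dt = \tfrac{b^4+6\pi^2k^4}{3k^2b}>0$ and $M-4N = -\tfrac{8mb^2}{\pi}<0$ (the constant $m$ now being whatever normalization the standard metric imposes; its positivity is all we need). Then I choose $\xi_{\sigma_\beta}\in\R$ with $-\lambda_{\sigma_\beta} < \xi_{\sigma_\beta} < -M/4N$, set $\Lamjv$ to be the $\Pp$-perturbation of $\Lambda$ by $\xi_{\sigma_\beta}$ in the $\sigma_\beta$-direction, and invoke the Perturbation Lemma: $\svejvv = M + 4\xi_{\sigma_\beta}N < M - M = 0$. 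Since $\gamma$ is an equigeodesic it is a geodesic for both $\Lambda$ and $\Lamjv$, so the hypotheses of the Perturbation Lemma are met; and $I^\gamma_{\Lamjv}(q_0)<0$ on $[0,b]$ means (by the Morse Index Theorem) that $\gamma|_{[0,b]}$ contains an interior conjugate point, i.e. there is $0<x\leq b$ with $\gamma(x)$ conjugate to $\gamma(0)$ for $\Lamjv$.

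The main obstacle I anticipate is verifying cleanly that conditions (1)--(5) of the perturbation pair really do eliminate every term of $(\Ad(\exp -tA_\alpha)q_0'(t))_{\mt}$ and $[q_0(t),q_0'(t)]_{\mt}$ outside the span $\Sp\{A_\beta, S_\beta, A_\delta, S_\delta\}$ and $\Sp\{A_\alpha, S_\alpha\}$ respectively — in particular controlling the constants $m_{\pm\alpha,\pm\beta}$, $m_{\pm\alpha,\pm\delta}$ via Lemma \ref{somadosm} and checking that the brackets of $A_\beta$ with $A_\delta$ do not produce a spurious $A_{\beta+\delta}=A_\alpha$ contribution in $(\Ad q_0')_{\mt}$ that would spoil the orthogonality used to split $\svejv$. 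Once that bookkeeping is pinned down, the analytic part is identical to the $\C P^{2n+1}$ computation and the choice of $\xi_{\sigma_\beta}$ finishes the proof.
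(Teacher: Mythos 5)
Your overall strategy matches the paper's — same $q_0$, same use of Lemmas \ref{colchetes} and \ref{somadosm}, same appeal to the Perturbation Lemma — but there are two concrete gaps in the way you set up the $\Pp$-perturbation and the constants.

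First, you set $\Pp = \piteta\setminus\{\sigma_\beta\}$, so only the component $\sigma_\beta$ is perturbed, and then you write $\svejvv = M + 4\xi_{\sigma_\beta}N$ with $N = \int_0^b\bigl\{(\tfrac{2\pi k}{b}\cos(\tfrac{2\pi t}{b}))^2 + (\tfrac1k(2t-b))^2\bigr\}\,dt$. But with your $\Pp$, the Perturbation Lemma only produces the $\alpha\in\sigma_\beta$ contribution, namely $4\xi_{\sigma_\beta}\int_0^b\tilde f_\beta^2\,dt$, and the integrand $\tilde f_\beta^2$ alone does \emph{not} simplify to $(\tfrac{2\pi k}{b}\cos\cdot)^2 + (\tfrac1k(2t-b))^2$; that identity comes from summing $\tilde f_\beta^2 + \tilde f_\delta^2$, which is precisely what cancels the cross terms $\pm\cos(mt)\sin(mt)$. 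Moreover you need $M < 4\cdot(\text{whatever replaces }N)$ to find an admissible $\xi\in(-1,-M/4N)$, and while $M<4N$ follows from the paper's explicit computation, $M<4\int_0^b\tilde f_\beta^2\,dt$ is not guaranteed. The paper resolves this by taking $\Pp=\{\sigma_0\}$ and perturbing \emph{both} $\sigma_1\ni\beta$ and $\sigma_2\ni\delta$ by the \emph{same} $\xi$ (they may coincide); only then does the Perturbation Lemma contribution collapse to $4\xi N$ with the full $N$.

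Second, you never control the sign of $m_{-\alpha,\beta}$. The first integral in $M$ is $4m_{-\alpha,\beta}\int_0^b\bigl((2t-b)\sen(\tfrac{2\pi t}{b}) - \tfrac{2\pi t(t-b)}{b}\cos(\tfrac{2\pi t}{b})\bigr)\,dt$, and the sign of $M-4N = -\tfrac{8m_{-\alpha,\beta}b^2}{\pi}$ depends on $\text{sgn}(m_{-\alpha,\beta})$. If $m_{-\alpha,\beta}<0$ your choice of $q_0$ gives $M-4N>0$ and no admissible $\xi$ exists. The paper handles this with an explicit case split, replacing $q_0$ by $k\sen(\tfrac{2\pi t}{b})A_\delta + \tfrac1k t(t-b)A_\beta$ when $m_{-\alpha,\beta}<0$; you need to do the same.
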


\begin{proof}
We have that $\gamma(a)$ is the first conjugate point of $\gamma(0)$ along $\gamma$. Then, considering the interval $[0,b]$, $\sve \geq 0$ for all differentiable map $q: [0,b] \to \ul$ such that $q(0)$ and $\Ad{(\exp{-t\Aal})}q(b) \in \kt$.

By Lemmas \ref{colchetes} and \ref{somadosm}
\begin{eqnarray*}
[\Aal,\Abeta] & = & m_{-\alpha,\beta}\Adelta \\
{[\Aal,\Adelta]} & = & -m_{-\alpha,\beta}\Abeta \\
{[\Abeta,\Adelta]} & = & m_{-\alpha,\beta}\Aal + m_{-\beta,\delta}A_{\beta - \delta}.
\end{eqnarray*}

We will split our proof in two cases:
\begin{itemize}
\item \textit{Case 1}: $m_{-\alpha,\beta} > 0$
\end{itemize}
Define $q_0: [0,b] \to \ul$ by
$$q_0(t) = k\sen\left(\dfrac{2\pi t}{b}\right)\Abeta + \dfrac{1}{k}t(t-b)\Adelta,$$
where $k$ is a non zero constant. The derivative of $q$ is
$$q_0'(t) = \left(\dfrac{2\pi k}{b}\cos{\left(\dfrac{2\pi t}{b}\right)}\right)\Abeta + \dfrac{1}{k}(2t-b)\Adelta.$$
Thus,
\begin{eqnarray*}
(\Ad{(\exp{-t\Aal})}q_0'(t))_{\mt} & = & \left(\dfrac{2\pi k}{b}\cos{\left(\dfrac{2\pi t}{b}\right)}\cos{(m_{-\alpha,\beta}t)}-\dfrac{1}{k}(2t-b)\sen{(m_{-\alpha,\beta}t)}\right)\Abeta\\
&& + \left(\dfrac{2\pi k}{b}\cos{\left(\dfrac{2\pi t}{b}\right)}\sen{(m_{-\alpha,\beta}t)}+ \dfrac{1}{k}(2t-b)\cos{(m_{-\alpha,\beta}t)}\right)\Adelta
\end{eqnarray*}
and
$$[q_0(t),q_0'(t)]_{\mt} = m_{-\alpha,\beta}\left((2t-b)\sen{\left(\dfrac{2\pi t}{b}\right)} - \dfrac{2\pi t(t-b)}{b}\cos{\left(\dfrac{2\pi t}{b}\right)}\right)\Aal.$$

Let $M = \svee$ on the interval $[0,b]$. Then, according to Proposition \ref{sve},
\begin{eqnarray*}
M & = & 4m_{-\alpha,\beta}\int_{0}^{b} \left((2t-b)\sen{\left(\dfrac{2\pi t}{b}\right)} - \dfrac{2\pi t(t-b)}{b}\cos{\left(\dfrac{2\pi t}{b}\right)}\right) \, dt \\
& & + \, 4\int_{0}^{b} \left\{\left(\dfrac{2\pi k}{b}\cos{\left(\dfrac{2\pi t}{b}\right)}\right)^2 + \left(\dfrac{1}{k}(2t-b)\right)^2\right\} \, dt.
\end{eqnarray*}

Let
$$N = \int_{0}^{b} \left\{\left(\dfrac{2\pi k}{b}\cos{\left(\dfrac{2\pi t}{b}\right)}\right)^2 + \left(\dfrac{1}{k}(2t-b)\right)^2\right\} \, dt.$$
Then,
$$ N = \dfrac{b^4 + 6\pi^2k^4}{3k^2b} > 0$$
and
$$ M - 4N = -\dfrac{8m_{-\alpha,\beta}b^2}{\pi} < 0.$$

We consider $\sigma_1,\sigma_2 \in \piteta$, $\sigma_0 \neq \sigma_1, \sigma_2$, such that $\beta \in \sigma_1$ e $\delta \in \sigma_2$ ($\sigma_1$ and $\sigma_2$ can be the same). Let $\Pp = \{\sigma_0\}$ and consider a $\Pp$-perturbation $\Lamjv = (\lamjv)_{\sigma \in \piteta}$ of $\Lambda$ given by:
\begin{enumerate}
\item $\lambda_{\sigma_0}^\# = 1$;
\item $\lambda_{\sigma_1}^\# = 1 + \xi$;
\item $\lambda_{\sigma_2}^\# = 1 + \xi$;
\item $\lamjv = 1 + \xi_{\alpha} > 0$, $\xi_{\alpha} \in \R$, for all $\sigma \in \piteta\setminus\{\sigma_0,\sigma_1,\sigma_2\}$,
\end{enumerate}
where $1 < \xi < -M/4N$. Then, by the Perturbation Lemma,
$$\svejvv = M + 4\xi N < M + 4\left(-\dfrac{M}{4N}\right)N = 0.$$

Therefore, there exists a constant $c$, $0<c\leq b$, such that $\gamma(c)$ is conjugated to $\gamma(0)$ with respect to the metric $\Lamjv.$
\begin{itemize}
\item \textit{Case 2:} $m_{-\alpha,\beta} < 0$
\end{itemize}

In this case, we define $q_0: [0,b] \to \ul$ by
$$q_0(t) = k\sen\left(\dfrac{2\pi t}{b}\right)\Adelta + \dfrac{1}{k}t(t-b)\Abeta.$$

The analysis is analogous to the previous case and we conclude that
$$M - 4N = \dfrac{8m_{-\alpha,\beta}b^2}{\pi} < 0.$$
Then, it is enough to consider the same $\Pp$-perturbation.
\end{proof}

\begin{example}
Consider the Wallach flag manifold $SU(3)/T^2$ equipped with the standard metric $(1,1,1)$.

The complexification of $\su(3)$ is the Lie algebra $\sll(3,\C)$. The root space decomposition of $\sll(3,\C)$ is given as follows. Consider the Cartan subalgebra $\h$ given by diagonal matrices of trace zero. Then, the root system of $\sll(3,\C)$ relative to $\h$ is composed by $\alpha_{ij} := \varepsilon_i - \varepsilon_j$, $1 \leq i \neq j \leq 3$, where $\varepsilon_i$ is the functional given by $\varepsilon_i: diag\{a_1,a_2,a_3\} \to a_i$.

A simple system of roots is
$$\Sigma = \{\alpha_{12}, \alpha_{23}\}$$
and the positive roots relative to this simple system are
$$\Pi^+ = \{\alpha_{12},\alpha_{13},\alpha_{23}\}.$$

The Weyl basis of $\sll(n,\C)$ associated to the Cartan subalgebra $\h$ is composed by
$$X_{ij} = X_{\alpha_{ij}} = \dfrac{\sqrt{2n}E_{ij}}{2n}$$
and
$$H_{i,i+1} = H_{\alpha_{i,i+1}} = \dfrac{E_{ii}-E_{i+1,i+1}}{2n},$$
$1 \leq i \neq j \leq 3$, where $E_{ij}$ is the $n \times n$ matrix with 1 in the $ij$-position and zero otherwise. Then, a basis for the real compact form $\su(n)$ of $\sll(n,\C)$ is given by the vectors $A_{ij} = X_{ij}-X_{ji}$, $S_{ij} = \sqrt{-1}(X_{ij} + X_{ji})$ and $\sqrt{-1}H_{i,i+1}$, $1 \leq i < j \leq 3$.

In the case of the flag manifold $SU(3)/T^2$, we have that $\Theta = \emptyset$ and $\ul_{12}$, $\ul_{13}$ and $\ul_{23}$ are the irreducible components of the isotropy representation, where
$$\ul_{ij} = \Sp\{A_{ij},S_{ij}; 1\leq i < j \leq 3\},$$
i.e.,
$$\mt = \ul_{12} \oplus \ul_{13} \oplus \ul_{23}.$$

Now consider the equigeodesic $\gamma: \R \to \ul$ given by $\gamma(t) = \exp{tA_{13}}\cdot o$. We have $\alpha_{13} = \alpha_{12} + \alpha_{23}$. Notice that $\alpha_{12} - \alpha_{23}$, $\alpha_{13}+\alpha_{12}$, $\alpha_{13} + \alpha_{23} \notin \Pi$. 

The first conjugate point of $\gamma(0)$ is $\gamma(\sqrt{6}\pi/2)$. Fix $b \in \R$ such that $0<b<\sqrt{6}\pi/2$. Then, by Theorem \ref{ca}, there exist a differentiable map $q_0: [0,b] \to \su(3)$ and an invariant metric $\Lamjv$ given by a $\Pp$-perturbation of $\Lambda$ such that $\svejvv < 0$. In fact, we just define $q_0$ according to
$$q_0(t) = k\sen\left(\dfrac{2\pi t}{b}\right)A_{12} + \dfrac{1}{k}t(t-b)A_{23}$$
and consider the $\{\alpha_{13}\}$-perturbation $\Lamjv = (\lambda_{12}^\#,\lambda_{13}^\#,\lambda_{23}^\#)$ given by:
\begin{enumerate}
\item $\lambda_{12}^\# = \lambda_{23}^\# = 1 + \xi$;
\item $\lambda_{13}^\# = 1$,
\end{enumerate}
where 
$$-1 < \xi < \dfrac{\sqrt{6}b^3k^2 - \pi b^4 - 6k^4\pi^3}{\pi(b^4+6k^4\pi^2)}.$$ \qed

\end{example}

\end{document}